\def\cleardoublepage{\clearpage\if@twoside \ifodd\c@page\else%
         \hbox{}%
     \thispagestyle{empty}
     \newpage%
     \if@twocolumn\hbox{}\newpage\fi\fi\fi}
\let\cleardoublepage\clearpage
\newtheorem{thm}{Theorem}[section]
\newtheorem{oss}[thm]{Remark}
\numberwithin{equation}{section}
\newtheorem{theorem}{Theorem}[section]
\newtheorem{proposition}[theorem]{Proposition}
\newtheorem{lemma}[theorem]{Lemma}
\newtheorem{definition}[theorem]{Definition}
\begin{document}

\title[Fast diffusion on noncompact Riemannian manifolds]{Fast diffusion on noncompact manifolds: \\ well-posedness theory and connections \\ with semilinear elliptic equations}

\author {Gabriele Grillo, Matteo Muratori, Fabio Punzo}

\address {Gabriele Grillo: Dipartimento di Matematica, Politecnico di Milano, Piaz\-za Leo\-nar\-do da Vinci 32, 20133 Milano, Italy}
\email{gabriele.grillo@polimi.it}

\address{Matteo Muratori: Dipartimento di Matematica, Politecnico di Milano, Piaz\-za Leo\-nar\-do da Vinci 32, 20133 Milano, Italy}
\email{matteo.muratori@polimi.it}

\address{Fabio Punzo: Dipartimento di Matematica, Politecnico di Milano, Piaz\-za Leo\-nar\-do da Vinci 32, 20133 Milano, Italy}
\email{fabio.punzo@polimi.it}


\begin{abstract}
We investigate the well-posedness of the fast diffusion equation (FDE) in a wide class of noncompact Riemannian manifolds. Existence and uniqueness of solutions for globally integrable initial data was established in \cite{BGV}. However, in the Euclidean space, it is known from Herrero and Pierre \cite{HP} that the Cauchy problem associated with the FDE is well posed for initial data that are merely in $ L^1_{\mathrm{loc}} $. We establish here that such data still give rise to global solutions on general Riemannian manifolds. If, in addition, the radial Ricci curvature satisfies a suitable pointwise bound from below (possibly diverging to $-\infty$ at spatial infinity), we prove that also uniqueness holds, for the same type of data, in the class of strong solutions. Besides, under the further assumption that the initial datum is in $L^2_{\mathrm{loc}}$ and nonnegative, a minimal solution is shown to exist, and we are able to establish uniqueness of purely (nonnegative) distributional solutions, which to our knowledge was not known before even in the Euclidean space. The required curvature bound is in fact sharp, since on model manifolds it turns out to be equivalent to stochastic completeness, and it was shown in \cite{GIM} that uniqueness for the FDE fails even in the class of bounded solutions on manifolds that are not stochastically complete. Qualitatively this amounts to asking that the curvature diverges at most quadratically at infinity. A crucial ingredient of the uniqueness result is the proof of nonexistence of distributional subsolutions to certain semilinear elliptic equations with power nonlinearities, of independent interest.

\end{abstract}

\maketitle

\bigskip

\section{Introduction} \setcounter{equation}{0}
We study existence and uniqueness of solutions to the Cauchy problem for the following nonlinear parabolic equation, known as \it fast diffusion equation \rm (FDE):
\begin{equation}\label{cauchy}
\begin{cases}
u_t=\Delta u^m & \text{in } M \times (0, +\infty) \, , \\
u=u_0 & \text{on } M \times \{0\} \, ,
\end{cases}
\end{equation}
where $ m \in (0,1) $, the initial datum $u_0$ belongs to a suitable class that will be specified below, $M$ is a complete, connected, noncompact $n$-dimensional Riemannian manifold and $ \Delta $ is the Laplace-Beltrami operator on $M$ (Laplacian for short). When dealing with sign-changing solutions, we adopt the usual convention $u^m:=\operatorname{sign}(u) |u|^m$. 
Let $o\in M$ be a fixed reference point, and let $r(x)$ denote the geodesic distance between $x$ and $o$.
In most of our results we will assume that the {\em radial} Ricci curvature with respect to $o$ satisfies the following bound from below:
\begin{equation}\label{eq1b}
\mathrm{Ric}_o(x) \ge - (n-1) \, \frac{\psi''(r(x))}{\psi(r(x))}\qquad  \forall x \in M \setminus \left(  \{ o \} \cup \operatorname{cut}(o)  \right)  ,
\end{equation}
where $  \operatorname{cut}(o) $ is the cut locus of $ o $, for some function $\psi$ such that
$$
\psi\in C^\infty((0, \infty))\cap C^1([0, \infty)) \, , \quad \psi' \geq 0 \, , \ \, \psi(0)=0 \, , \ \, \psi'(0)=1
$$
and
\begin{equation}\label{eq1c}
\int_0^{\infty} \frac{\int_0^r \psi(\rho)^{n-1} \, d\rho}{\psi(r)^{n-1}} \, dr = \infty \, .
\end{equation}
In particular, we can consider the relevant case (let $C>0$)
\begin{equation}\label{quad}
\mathrm{Ric}_o(x) \ge - C \left[1 + r(x)^2 \right] \qquad  \forall x \in M \setminus \left(  \{ o \} \cup \operatorname{cut}(o)  \right)  .
\end{equation}

The fast diffusion equation has widely been investigated in the Euclidean setting, see e.g.~\cite{V} and references quoted therein for a thorough discussion. In that setting it takes origin as a model for plasma physics \cite[Chapter 2]{V}, it comes into play in the diffusive limit of kinetic equations \cite{LT} and, for a special value of $m$, in the evolutionary Yamabe problem (see for instance \cite{DDS}). Note that the FDE is a \it singular \rm equation in the sense that the diffusion coefficient $m|u|^{m-1}$ diverges as $|u|\to0$, but at the same time it is also \emph{degenerate} since the diffusion coefficient vanishes as $ |u| \to \infty $. Solutions to \eqref{cauchy} exhibit infinite speed of propagation in $ \mathbb{R}^n $, to such an extent that finite-time extinction can occur if $m$ is sufficiently close to zero.

The investigation about \it nonlinear diffusions \rm of the type of \eqref{cauchy} on Riemannian manifolds has begun just recently, dealing especially with the case $m>1$, known in the literature as \it porous medium equation \rm (PME) or \emph{slow diffusion}. We mention in this connection the papers \cite{V2, GM, GMV, BS, GMP, GMP2, GMV-MA}, where several well-posedness issues have successfully been addressed. Nevertheless, it should be pointed out that the methods used in the PME regime are often very different from the ones that are suitable for the analysis of the FDE, and the results themselves exhibit significant dissimilarities: for example, the natural class of initial data for which existence and uniqueness of solutions to the FDE is guaranteed in ${\mathbb R}^n$ is considerably \it larger \rm than the one corresponding to the PME.

The Cauchy problem \eqref{cauchy} in the case $M \equiv {\mathbb R}^n$ and $ m \in (0,1) $ was thoroughly investigated in the seminal paper \cite{HP} by Herrero and Pierre. They show that, for any $L^1_{\mathrm{loc}}(\mathbb{R}^n)$ initial datum, there exists a global solution, which is also unique under the additional assumption that $u_t$ is locally integrable, i.e.~in the class of \emph{strong solutions}.  In particular, no requirement at all on the behavior at infinity of $ u_0 $ is necessary. 

A crucial tool in their proofs is the celebrated \emph{Herrero-Pierre estimate}, that allows one to bound the $L^1$ norm of the solution (at some time) in a ball by means of the $L^1$ norm of the solution (at a different time) in a larger ball, plus an explicit remainder term. More importantly, the same estimate applies to the \emph{difference} of solutions, see Propositions \ref{prop1} and \ref{prop2} below for analogues in the present setting. We mention that uniqueness of purely \emph{distributional} solutions was not addressed in \cite{HP}, and to our knowledge it had not yet been studied so far even in the Euclidean space. Nevertheless, we recall that uniqueness of nonnegative \emph{bounded} distributional solutions for problem \eqref{cauchy} ($M\equiv\mathbb R^n$) with the additional absorption term $-u^p$ in the right-hand side, for $u_0\in L^\infty(\mathbb R^n)$, was achieved in \cite[Theorem 2.1]{PZ} provided $m>(1-2/ n )^+ $ and $ p> n (1-m)/2$.

As concerns the Riemannian setting, a first contribution was given by \cite{BGV}, where the FDE is investigated in the special class of Cartan-Hadamard manifolds, namely complete, simply connected Riemannian manifolds with everywhere nonpositive sectional curvature. In particular, problem \eqref{cauchy} is shown to admit a unique strong solution, in the $ H^{-1}(M) $ sense, if $u_0 $ is \emph{globally} integrable and belongs to $ H^{-1}(M) $ (see Theorems 3.2 and 3.4 there for the details).

In the recent paper \cite{BS}, uniqueness of strong solutions to \eqref{cauchy} for \it locally \rm integrable data is proved on general manifolds, following the Herrero-Pierre strategy, by means of a careful construction of appropriate cut-off functions, under the additional requirement that the (negative) curvature decays sufficiently fast at infinity. More precisely, it is assumed that
\begin{equation}\label{quasi-euc}
\mathrm{Ric}(x) \geq -  \frac{C}{1+r(x)^2} \qquad  \forall x \in M
\end{equation}
for some $ C > 0$ and $m>m_c $, where $m_c$ is a suitable exponent depending on $C$ and $n$, larger than $1-2/ n  $. If $ C=0 $, the result holds for every $ m \in (0,1) $ (see \cite[Theorem 4.9]{BS}).

In \cite{GIM}, uniqueness of (nonnegative) \emph{globally bounded} solutions for the FDE is proved to be fully equivalent to the \it stochastic completeness \rm of the manifold, a property that does hold under \eqref{eq1b}--\eqref{eq1c} \cite[Corollary 15.3]{Gri}. {Furthermore, on model manifolds, namely spherically-symmetric Riemannian manifolds where \eqref{eq1b} is in fact an identity, condition \eqref{eq1c} turns out to be equivalent to stochastic completeness \cite[Proposition 3.2]{Gri}.} In particular, {if one considers power-type curvature bounds}, uniqueness of (nonnegative, bounded) solutions to the FDE is ensured under \eqref{quad}, but it \it fails \rm as soon as $\mathrm{Sec}_o(x) \le - C \, r(x)^{2+\varepsilon}$ for some $C,\varepsilon>0$ and large $r(x)$, where $\mathrm{Sec}_o(x)$ stands for the radial sectional curvature {(here $o$ is required to be a \emph{pole}, namely to have empty cut locus)}.

We finally mention \cite{GM1}, where fine long-time asymptotics for solutions to \eqref{cauchy} corresponding to a restricted class of (radial) initial data is investigated in the special case of the hyperbolic space $ \mathbb{H}^n $.

In the present paper, we focus on the main open well-posedness issues related to the Cauchy problem \eqref{cauchy}. We will prove the following results:
\begin{itemize}
\item Theorems \ref{strong-sol} and \ref{distr-sol}: existence of \it distributional solutions \rm 
for general data $ u_0 \in L^1_{\mathrm{loc}}(M) $, and of the \it minimal solution \rm for nonnegative data $ u_0 \in L^2_{\mathrm{loc}}(M)$, {without curvature bounds}.

\item Theorem \ref{tuni1}: uniqueness of \it strong solutions \rm under the curvature bounds \eqref{eq1b}--\eqref{eq1c}, provided they have a common $L^1_{\mathrm{loc}}(M)$ initial trace in a suitable sense.

    \item Theorem \ref{tuni2}: uniqueness of \it nonnegative distributional solutions \rm under the curvature bounds \eqref{eq1b}--\eqref{eq1c}, provided $u\in L^2_{\mathrm{loc}}(M\times[0,+\infty))$. In fact, we will show that any such solution must coincide with the  minimal one constructed in Theorem \ref{distr-sol}. Apparently, this property was not known even in $ \mathbb{R}^n $.
\end{itemize}

{In the light of the nonuniqueness results of \cite{GIM}, the curvature conditions appearing in Theorems \ref{tuni1} and \ref{tuni2} are sharp. Indeed, if $ o $ is a pole and $\mathrm{Sec}_o(x) \le - \psi''(r(x))/\psi(r(x)) $ for some function $\psi$ as above that \emph{does not} meet \eqref{eq1c}, then $M$ is stochastically incomplete \cite[Corollary 15.3]{Gri}. Hence \cite[Theorem 1.1]{GIM} implies that any nonnegative initial datum $ u_0 \in L^\infty(M) $ gives rise to at least two different nonnegative bounded solutions to \eqref{cauchy}. As mentioned above, this occurs in particular on manifolds with \emph{superquadratic} negative curvature.}

In order to prove our uniqueness results for \eqref{cauchy} it will be crucial to deal,  for every $p>1$ and $\alpha>0$, with the \emph{semilinear elliptic equation}
\begin{equation}\label{eqn1}
\Delta W = \alpha \, W \, |W|^{p-1} \qquad \text{in } M \, .
\end{equation}
More precisely, we will establish the following nonexistence property:

\begin{itemize}

\item Theorem \ref{tell}: for every $p>1$ and $ \alpha>0$ equation \eqref{eqn1} does not admit any nonnegative, nontrivial, distributional \emph{subsolution}, namely any nonnegative function $ W \not \equiv 0 $ with $W\in L^p_{\mathrm{loc}}(M)$ such that
\begin{equation}\label{eqn1-subsol}
\Delta W \ge  \alpha \, W^{p} \qquad \text{in } \mathcal{D}'(M) \, .
\end{equation}
\end{itemize}
{In other words, should such a function satisfy \eqref{eqn1-subsol}, then it is identically zero. As an immediate corollary, in view of Kato's inequality one can deduce that neither \eqref{eqn1} admits (possibly sign-changing) distributional solutions.}

\begin{oss}\rm
{Results on nonexistence of nontrivial solutions for elliptic problems of the form \eqref{eqn1} or \eqref{eqn1-subsol}} have widely been studied in the literature, even for more general operators and nonlinearities. In particular, similar properties are known to hold without requiring curvature bounds, but assuming an \emph{a-priori} growth condition on the solution instead (see \cite{PRS} and references therein). On the other hand, when a suitable curvature bound from below is satisfied, it was shown in \cite{M} and \cite[Theorem 1.9]{PRS} that the so-called \emph{Omori-Yau} maximum principle is valid. The corresponding curvature condition does single out quadratic growth, {however it can be checked that it is slightly stronger than \eqref{eq1b}--\eqref{eq1c}, since the non-integrability constraint (i.e.~the analogue of \eqref{eq1c}) actually involves $ \psi'' $.} Nevertheless, if such a maximum principle holds \cite[Theorem 1.31]{PRS} establishes first that all $C^2(M)$ nonnegative subsolutions to \eqref{eqn1} are bounded, and as a consequence must vanish identically {by previous results}. Therefore, {up to a small gap in the curvature bounds}, the thesis of Theorem \ref{tell} was already known for \emph{smooth subsolutions}. Our main contribution consists of dropping the regularity assumption, {since we deal with merely distributional subsolutions}. Clearly distributional \it solutions \rm to \eqref{eqn1} are {at least $C^2(M)$} by elliptic regularity, but it should be commented that our results do not depend in any way on regularity {(we only exploit local boundedness)} and hence are potentially applicable to more general contexts. {It should however be pointed out that in \cite{PRS} the right-hand side of \eqref{eqn1-subsol} includes a wider class of nonlinearities.}
\end{oss}

We stress that, under assumptions \eqref{eq1b}--\eqref{eq1c}, we are able to extend the results of \cite{GIM} so as to cover the whole class of initial data and solutions from $ L^\infty $ to {$L^2_{\mathrm{loc}}$}. Moreover, note that the methods of proofs used in \cite{HP} and in \cite{PZ} to obtain uniqueness do not work in our framework. To be specific, the arguments exploited in \cite{HP} require a sort of homogeneity which is typical of $\mathbb R^n$, and is in general lost on Riemannian manifolds. Furthermore, the authors take advantage of the classical mean-value property for (sub-) harmonic functions; the latter remains true also on general Riemannian manifolds, however it has a different local form (see \cite{LS}) which makes it unsuitable for a straight application of the methods of proof developed in \cite{HP}. On the other hand, the main argument of \cite{PZ} (and also of \cite[Theorem 4.9]{BS} under \eqref{quasi-euc}) relies on the fact that the volume of balls has polynomial growth, whereas in our setting, in view of the possible negative curvature, geodesic balls can even grow exponentially or faster. Hence, although we take inspiration from various ideas of \cite{HP} and \cite{PZ}, here we exploit a different strategy to achieve uniqueness. In this regard, some techniques will also be borrowed from the ``bounded'' framework of \cite{GIM}.

\subsection{Statements of the main results}\label{state}

We start by providing the definitions of solution that we will deal with. In the following, we let $ d\mu $ denote the Riemannian volume measure of the manifold at hand $M$, which hereafter is assumed to be complete, connected and noncompact (unless otherwise specified).

\begin{definition}[Distributional solutions]\label{defsol}
Let $m \in (0,1)$ and $ u_0 \in L^1_{\mathrm{loc}}(M) $. We say that a function $u\in L^1_{\mathrm{loc}}(M\times [0, +\infty))$ is a (distributional, or very weak) solution of the Cauchy problem \eqref{cauchy} 
if it satisfies
\begin{equation}\label{distrib-id}
u_t=\Delta u^m \qquad \text{in } \mathcal D'(M\times (0,+\infty))
\end{equation}
and (in the sense of essential limits)
\begin{equation}\label{eq12c}
\lim_{t\to 0^+} \int_M u(x,t) \, \psi(x) \, {d}\mu  = \int_M u_0 \, \psi  \, d\mu \qquad \forall \psi\in C^\infty_c(M)\, .
\end{equation}
\end{definition}

Our main existence result, without further assumptions on $ u_0 $, is the following.
\begin{theorem}[Existence of solutions]\label{strong-sol} There exists a solution $ u $ of problem \eqref{cauchy}, in the sense of Definition \ref{defsol}. In addition $u\in C([0, +\infty); L^1_{\mathrm{loc}}(M))$.
\end{theorem}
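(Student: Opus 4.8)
The plan is to construct the solution by a double approximation: first on geodesic balls with Dirichlet boundary conditions, then by truncating/regularizing the initial datum, and finally to pass to the limit using compactness estimates that survive the absence of any control at infinity. Concretely, fix an exhaustion $M = \bigcup_k B_k$ by geodesic balls $B_k = B_{R_k}(o)$ with smooth boundary (or a Lipschitz approximation thereof). For the initial datum, write $u_0 = (u_0)^+ - (u_0)^-$ and set $u_{0,k} := \operatorname{sign}(u_0)\min\{|u_0|,k\}\,\chi_{B_k} \in L^\infty(B_k) \subset H^{-1}(B_k)$; this is bounded and in particular lies in the energy space where the FDE on a bounded domain is well understood. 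On each $B_k$ the Cauchy--Dirichlet problem $u_t = \Delta u^m$ in $B_k\times(0,\infty)$, $u^m = 0$ on $\partial B_k$, $u(\cdot,0) = u_{0,k}$, has a unique solution $u_k$ obtained by standard nonlinear semigroup theory (Brezis' theory of maximal monotone operators, or the results already invoked via \cite{BGV}, applied on the compact manifold-with-boundary $\overline{B_k}$); it is a strong solution with $u_k \in C([0,\infty);L^1(B_k))$, $u_k \in L^\infty_{\mathrm{loc}}((0,\infty);L^\infty(B_k))$ by standard smoothing effects for the FDE on bounded domains, and it satisfies the comparison principle. We may extend each $u_k$ by zero outside $B_k$, so that all $u_k$ are defined on $M\times[0,\infty)$.

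The heart of the argument is to show that $\{u_k\}$ is precompact and that the limit is a solution on all of $M$. Two ingredients drive this. First, monotonicity: by the comparison principle on $B_{k}$ (using $u_{0,k}$ is monotone in $k$ on each fixed ball, together with the fact that the smaller solution, extended by zero, is a subsolution on the larger ball — this requires the usual care that extension by zero of a nonnegative solution is a subsolution, handled separately for the positive and negative parts), one gets $0 \le u_k^+ \le u_{k+1}^+$ and $0 \le u_k^- \le u_{k+1}^-$ pointwise, hence monotone a.e.\ convergence $u_k \to u$ for some measurable $u$. Second, the \emph{Herrero--Pierre estimate}: by Proposition \ref{prop1} (the $L^1$-in-a-ball bound, which holds on $\overline{B_k}$ with constants depending only on $n$, $m$ and the local geometry, uniformly in $k$ once $k$ is large compared to the ball in question), for every fixed $R$ and $T$ one has a bound
\begin{equation*}
\sup_{t\in[0,T]} \int_{B_R(o)} |u_k(x,t)|\, d\mu \le C\!\left( \int_{B_{2R}(o)} |u_0|\,d\mu,\, R,\, T \right) + (\text{explicit remainder}),
\end{equation*}
uniformly in $k$. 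This gives a uniform $L^\infty((0,T);L^1_{\mathrm{loc}}(M))$ bound, so by monotone convergence $u \in L^1_{\mathrm{loc}}(M\times[0,\infty))$ and $u_k \to u$ in $C([0,T];L^1_{\mathrm{loc}}(M))$ — the continuity-in-time of the limit being inherited because the uniform Herrero--Pierre bound upgrades the a.e.\ convergence and gives equicontinuity of $t\mapsto \int_{B_R} u_k(t)\,\psi$ via the equation (the time-derivative in $\mathcal D'$ is controlled by $\int |\nabla \psi||\nabla u_k^m|$, and $u_k^m$ has a local energy bound from testing the equation against $u_k^m$ times a cutoff). Passing to the limit in the distributional identity \eqref{distrib-id} then only needs $u_k^m \to u^m$ in $L^1_{\mathrm{loc}}(M\times(0,\infty))$, which follows from the a.e.\ convergence together with a uniform local $L^{1+1/m}$-type bound on $u_k$ (again from smoothing effects) via Vitali; the initial condition \eqref{eq12c} passes to the limit because $u_k(\cdot,t)\to u(\cdot,t)$ in $L^1_{\mathrm{loc}}$ uniformly for small $t$ and $u_{0,k}\to u_0$ in $L^1_{\mathrm{loc}}$.

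The main obstacle is the uniform local integrability and, tied to it, the compactness needed to pass to the limit in the nonlinear term $u^m$ without any decay hypothesis on $u_0$ and with geodesic balls possibly of exponential volume growth: this is exactly what the Herrero--Pierre estimate of Proposition \ref{prop1} is designed to supply, so the real work is checking that that estimate holds on the Dirichlet balls $\overline{B_k}$ with constants independent of $k$ — which is where one pays attention to the geometry of $M$ near a fixed ball but needs nothing at infinity. A secondary technical point is the justification that extension-by-zero of the ball solutions yields a monotone family; one handles this by the standard device of comparing, on $B_{k+1}$, the function $u_{k+1}$ with the (sub)solution built from $u_k$, splitting into positive and negative parts and using that $t\mapsto \int_{B_k} (u_k^+ - u_{k+1}^+)^+$ is nonincreasing by an $L^1$-contraction/Kato argument localized to $B_k$, where the boundary term has a favorable sign because $u_{k+1}^m \ge 0 = u_k^m$ on $\partial B_k$. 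Once monotonicity and the uniform Herrero--Pierre bound are in hand, the rest is routine limiting and the stated regularity $u\in C([0,+\infty);L^1_{\mathrm{loc}}(M))$ comes for free.
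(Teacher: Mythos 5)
Your overall strategy --- approximate $u_0$ by bounded, compactly supported data, produce a \emph{monotone} family of approximate solutions, and use the Herrero--Pierre estimate of Proposition \ref{prop1} as the $L^1_{\mathrm{loc}}$ stability mechanism --- is the same as the paper's, which however solves the approximate problems globally on $M$ for $L^1\cap L^\infty$ data rather than on Dirichlet balls, precisely to sidestep the boundary issues below. Note first that monotonicity is not a cosmetic point here: since the remainder $\mathcal{H}_R\,|t-s|$ in \eqref{eq3} does not vanish along the approximation, the Herrero--Pierre estimate by itself yields only a uniform $L^1_{\mathrm{loc}}$ bound, not a Cauchy property, so the a.e.\ convergence of $\{u_k\}$ must come from monotone convergence. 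The genuine gap is in how you obtain that monotonicity for sign-changing data.

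Two things go wrong. First, $u_{0,k}=\operatorname{sign}(u_0)\min\{|u_0|,k\}\,\chi_{B_k}$ is not monotone in $k$ wherever $u_0<0$ (nor on the annulus $B_{k+1}\setminus B_k$ where $u_0<0$), so comparison does not even order the data. Second, and more seriously, the claimed inequalities $u_k^+\le u_{k+1}^+$ and $u_k^-\le u_{k+1}^-$ cannot be extracted from the comparison principle: by Kato's inequality both $u_k^+$ and $u_{k+1}^+$ are merely \emph{subsolutions}, and comparison orders a subsolution against a supersolution, never two subsolutions against each other. Relatedly, the zero extension of a Dirichlet ball solution is a subsolution on the larger ball only when the solution is nonnegative; for sign-changing $u_k$ the required boundary ordering on $\partial B_k\times(0,\infty)$ can fail, so even $u_k\le u_{k+1}$ is unavailable. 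The standard repair --- and what the paper's phrase ``initial data chosen properly'' refers to --- is a doubly indexed monotone scheme, e.g.\ data $\left(u_0^+\wedge k\right)\chi_{B_k}-\left(u_0^-\wedge j\right)\chi_{B_j}$, whose solutions are increasing in $k$ and decreasing in $j$ by order preservation of the (global) $L^1$ semigroup; one then takes iterated monotone limits, each controlled by Proposition \ref{prop1} applied to ordered pairs (comparing with the purely nonnegative and purely nonpositive solutions to get the uniform $L^1_{\mathrm{loc}}$ bound). With that replacement the rest of your outline goes through; for nonnegative $u_0$ your scheme is fine as written.
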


For \emph{nonnegative} initial data that also belong to $L^2_{\mathrm{loc}}(M)$, we can establish existence of the \emph{minimal} solution in the class of nonnegative distributional solutions.

\begin{theorem}[Existence of the minimal solution]\label{distr-sol}
Given $ u_0 \in L^2_{\mathrm{loc}}(M)$, with $u_0\geq 0 $, there exists a nonnegative {distributional} solution $\underline u\in L^2_{\mathrm{loc}}(M\times[0,+\infty))$ of problem \eqref{cauchy}, in the sense of Definition \ref{defsol}, which is {minimal} in the class of nonnegative distributional solutions belonging to $L^2_{\mathrm{loc}}(M\times[0,+\infty))$. That is, for any nonnegative distributional solution $u\in L^2_{\mathrm{loc}}(M\times[0,+\infty))$ of \eqref{distrib-id} with the same initial datum according to \eqref{eq12c} we have
$$\underline u\leq u \qquad \text{a.e.~in }  M \times (0, +\infty) \, . $$
\end{theorem}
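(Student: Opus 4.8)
The plan is to construct $\underline u$ as a monotone limit of solutions to approximate problems posed on an exhausting sequence of bounded domains, and then verify minimality by a comparison argument against an arbitrary nonnegative $L^2_{\mathrm{loc}}$ distributional solution. First I would fix a smooth exhaustion $M = \bigcup_k \Omega_k$ with $\Omega_k \Subset \Omega_{k+1}$ precompact with smooth boundary, and for each $k$ solve the homogeneous Dirichlet problem for the FDE on $\Omega_k \times (0,+\infty)$ with initial datum $\min\{u_0,k\}\mathbf 1_{\Omega_k}$ (or simply $u_0|_{\Omega_k}$, which is in $L^2(\Omega_k)\subset L^1(\Omega_k)$ since $\Omega_k$ is bounded). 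On a bounded domain this is the classical well-posedness theory for the FDE with bounded-measure / $L^1$ data (semigroup theory in $H^{-1}$, or the accretive-operator approach), giving a unique nonnegative strong solution $u_k$, which moreover enjoys the standard $L^2_{\mathrm{loc}}$ and energy estimates coming from testing the equation against $u_k^m$ and against cut-offs. Extending $u_k$ by zero outside $\Omega_k$, the comparison principle on $\Omega_k$ (valid there since the domain is bounded and the data are ordered) yields $u_k \le u_{k+1}$ a.e., so $\underline u := \lim_k u_k$ exists pointwise a.e. in $M\times(0,+\infty)$, with values in $[0,+\infty]$ a priori.

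The second step is to show $\underline u$ is finite and is a genuine distributional solution in $L^2_{\mathrm{loc}}$. For this I would fix any precompact $K\Subset M$ and a time interval $[0,T]$, choose $k$ large enough that $K\Subset\Omega_k$, select a cut-off $\varphi\in C_c^\infty(\Omega_k)$ with $\varphi\equiv 1$ on $K$, and test the equation for $u_j$ ($j\ge k$) against $\varphi^2 u_j^m$ together with a time localization; this produces an a priori bound, uniform in $j$, on $\int_0^T\int_K u_j^{m+1}\,d\mu\,dt$ and on $\int_0^T\int_K|\nabla(\varphi u_j^{m})|^2\,d\mu\,dt$ in terms of $\int_{\Omega_k}u_0^2\,d\mu$, $T$, and the geometry of $\Omega_k$ — here the curvature plays no role because all integrations by parts take place on the fixed precompact $\Omega_k$. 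Since $m+1>1$, these bounds combined with monotone convergence give $\underline u\in L^{m+1}_{\mathrm{loc}}(M\times[0,+\infty))$; to upgrade to $L^2_{\mathrm{loc}}$ one uses in addition the order-preserving $L^\infty$–$L^1$ smoothing or simply the contraction $\|u_j(t)\|_{L^2(K')}$ estimate from the $H^{-1}$ framework on $\Omega_k$, which is again uniform in $j$. Passing to the limit in the weak formulation $\int\!\!\int u_j\,\eta_t = \int\!\!\int u_j^m\,\Delta\eta$ for $\eta\in C_c^\infty(M\times(0,+\infty))$ is then justified by dominated convergence (monotone in $u_j$, and $u_j^m\le \underline u^m\in L^1_{\mathrm{loc}}$), so $\underline u$ satisfies \eqref{distrib-id}. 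The initial trace \eqref{eq12c} follows from the $C([0,+\infty);L^1(\Omega_k))$ continuity of each $u_k$, the uniform-in-$k$ modulus of continuity at $t=0$ (e.g.\ via the Bénilan–Crandall time-monotonicity estimate $\partial_t(t^{1/(1-m)} u)\ge 0$ in the distributional sense, or via the Herrero–Pierre-type estimate of Proposition \ref{prop1} applied on $\Omega_k$), and monotone convergence.

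The third step, minimality, is where I expect the real work to lie. Let $u\ge 0$ be any distributional solution in $L^2_{\mathrm{loc}}(M\times[0,+\infty))$ with the same initial datum; I want $u_k\le u$ a.e.\ in $\Omega_k\times(0,+\infty)$ for every $k$, and then let $k\to\infty$. The obstacle is that $u$ is only distributional — not known a priori to be a strong solution, nor continuous in time, nor to have a boundary trace on $\partial\Omega_k$ — so one cannot directly invoke the comparison principle on $\Omega_k$. The remedy is that $u$, being nonnegative, is a \emph{supersolution} of the Dirichlet problem on $\Omega_k$: formally $u_t=\Delta u^m\ge \Delta u^m$ with $u\ge 0 = (u_k)|_{\partial\Omega_k}$ on the lateral boundary in the appropriate one-sided sense. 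To make this rigorous without extra regularity I would use the standard duality / test-function comparison for the FDE: subtract the weak formulations of $u$ and $u_k$, test against a carefully chosen nonnegative test function built from a solution of a dual linear parabolic problem with coefficient $\xi$ interpolating between $u^m$ and $u_k^m$ (the "$p$-Laplacian trick" of Pierre, or the Brezis–Crandall argument), exploiting that $(\xi)$ is nonnegative and bounded on $\Omega_k\times[0,T]$ because both $u$ and $u_k$ are in $L^2_{\mathrm{loc}}$ and the FDE has a local smoothing effect making them locally bounded for $t>0$. This yields $\int_{\Omega_k}(u_k-u)_+(t)\,\varphi\,d\mu \le 0$ for all nonnegative $\varphi\in C_c^\infty(\Omega_k)$, hence $u_k\le u$ a.e.\ on $\Omega_k\times(0,+\infty)$; since this holds for all $k$, $\underline u=\lim_k u_k\le u$ a.e.\ in $M\times(0,+\infty)$. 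An alternative, perhaps cleaner, route avoiding the dual problem is to approximate $u$ itself from below by strong solutions $u^{(\delta)}$ on $\Omega_k$ with data $\min\{u_0, M_\delta\}$ truncated, use the interior $L^\infty$ bound to control $u$ on compacta of $\Omega_k\times(0,T]$, and then invoke the comparison principle between the strong solutions $u_k$ and $u^{(\delta)}$ directly; the subtlety there is the behavior near $\partial\Omega_k$, handled by noting $u_k\to 0$ in $L^1$ near the boundary while $u\ge 0$. Either way, the crux is the rigorous comparison between a distributional supersolution and a strong subsolution on a bounded domain, which is exactly the point where local boundedness of $L^2_{\mathrm{loc}}$ solutions (the FDE smoothing effect, used elsewhere in the paper) must be invoked.
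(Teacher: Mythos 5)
Your overall architecture (exhaustion, monotone limit of Dirichlet approximations, duality comparison against an arbitrary nonnegative $L^2_{\mathrm{loc}}$ solution) matches the paper's, but the step you yourself identify as ``where the real work lies'' contains a gap that your proposed remedy does not close. In the duality argument the interpolating coefficient $a=(u^m-u_k^m)/(u-u_k)$ behaves like $m\min\{u,u_k\}^{m-1}$, so for $m<1$ it blows up precisely where the solutions are \emph{small}; local boundedness of $u$ and $u_k$ from above (even if it held) gives no control on $a$, contrary to what you assert. The paper's fix is to \emph{lift} the approximate problems, i.e., to solve \eqref{epsilon-R} with boundary and initial data raised by $\ell>0$, so that $u_{k,\ell,\beta}\ge\ell$ and hence $a\le\ell^{m-1}$; the price is an error term of order $\ell^m$ coming from the nonzero lateral datum, which is controlled via a Green-function bound on the normal derivative of the dual solution and then sent to zero with $\ell$. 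Without this lifting the dual problem \eqref{n9} and the approximation \eqref{Linf-star} are not available. Separately, your appeal to a ``local smoothing effect making them locally bounded for $t>0$'' is false for general $m\in(0,1)$: for $m$ below the critical exponents, $L^2_{\mathrm{loc}}$ data need not produce locally bounded solutions, and the paper never uses (nor needs) local boundedness of $u$ --- only $u\in L^2_{\mathrm{loc}}$, which enters through the error term \eqref{n48}.

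The second unresolved point is the lateral boundary. To compare $u$ (a global distributional solution, with $u^m$ merely locally integrable) with $u_k$ on a bounded domain, one must test the equation for $u$ against functions vanishing on the boundary but not compactly supported inside, and this produces a flux term $\int_0^T\int_{\partial\Omega}u^m\,\frac{\partial\xi}{\partial\nu}\,d\sigma dt$ for which $u^m$ has no trace in general. You mention this obstruction, but the resolution you sketch (``$u_k\to0$ in $L^1$ near the boundary while $u\ge0$'') is not an argument. The paper devotes Proposition \ref{lem:comp} to exactly this: by a Lebesgue-differentiation argument in normal coordinates near $\partial\Omega_{R_{k+1}}$ it selects, for each $k$, a slightly smaller regular domain $D_k$ \emph{adapted to $u$} on whose boundary the trace of $u^m$ exists and the integration-by-parts inequality \eqref{p2} holds; the comparison is then run on $D_k$, where the sign of $\frac{\partial\xi}{\partial\nu}$ makes the flux term harmless. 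Your construction of $\underline u$ and the $L^2_{\mathrm{loc}}$ estimates (Step~2, via local energy estimates --- note that the global $L^2$ contraction on $\Omega_j$ does not localize, since $\|u_0\|_{L^2(\Omega_j)}$ may diverge as $j\to\infty$) are essentially sound, but the minimality step needs both the $\ell$-lifting and the adapted-domain trace lemma to be rigorous.
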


In some of our results below we will need to require a further property of solutions, which amounts to asking that the time derivative is a locally integrable function, so that \eqref{distrib-id} holds pointwise.

\begin{definition}[Strong solutions]\label{strong} We say that a function $u$ is a {strong solution} of problem \eqref{cauchy} if it is a distributional solution in the sense of Definition \ref{defsol} and in addition
\begin{equation*}\label{eq11}
u_t \in L^1_{\mathrm{loc}}(M\times (0, +\infty)) \, .
\end{equation*}
\end{definition}

If moreover the Ricci curvature complies with conditions \eqref{eq1b}--\eqref{eq1c}, we are able to obtain a uniqueness result for {\it strong solutions}. We stress that, even in the Euclidean setting, uniqueness of possibly \emph{sign-changing} solutions was proved within the class of strong solutions only (see \cite[Theorem 2.3]{HP}).

\begin{theorem}[Uniqueness of strong solutions]\label{tuni1}
Let the curvature conditions \eqref{eq1b}--\eqref{eq1c} be satisfied. Let $u $ and $ v $ be any two strong solutions of problem \eqref{cauchy}, in the sense of Definition \ref{strong}, such that $\left|u(\cdot,t)-v(\cdot,t)\right|\to 0$ in $L^1_{\mathrm{loc}}(M)$ as $t\to0^+$. Then $u= v$ almost everywhere in $M\times (0, +\infty)$.
\end{theorem}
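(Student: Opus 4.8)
The plan is to follow the Herrero--Pierre strategy adapted to the Riemannian setting, the core of which is a quantitative $L^1_{\mathrm{loc}}$ contraction estimate for the difference of two strong solutions. Set $w:=u-v$, so that $w_t = \Delta(u^m-v^m)$ holds pointwise a.e.\ (since $u,v$ are strong solutions) and $|w(\cdot,t)|\to 0$ in $L^1_{\mathrm{loc}}(M)$ as $t\to 0^+$. Because $r\mapsto r^m$ is monotone, $\sign(u^m-v^m)=\sign(w)$, so formally multiplying the equation by $\sign(w)\,\varphi$ for a nonnegative cut-off $\varphi$ and using Kato's inequality gives
\begin{equation*}
\partial_t |w| \le \Delta\!\left(|u^m-v^m|\right)\,\sign(w)\cdot(\cdots)\le \operatorname{div}\!\big(\sign(w)\nabla(u^m-v^m)\big)\quad\text{in }\mathcal D'.
\end{equation*}
The rigorous version of this is exactly the content of the Herrero--Pierre-type estimates quoted in the excerpt as Propositions \ref{prop1} and \ref{prop2}: for suitable balls $B_R\subset B_{2R}$ and times $0<\tau<t$,
\begin{equation*}
\int_{B_R}|w(x,t)|\,d\mu \le \int_{B_{2R}}|w(x,\tau)|\,d\mu + C\,(t-\tau)^{\theta}\,R^{\sigma}\left(\int_\tau^t\!\!\int_{B_{2R}}|u^m-v^m|\,d\mu\,ds\right)^{\cdots}+\cdots,
\end{equation*}
with the remainder term expressed through the $L^1$ norm of the difference on the larger ball and an explicit power of $R$ coming from the cut-off's Laplacian. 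The first step, therefore, is to record these propositions and let $\tau\to0^+$, killing the first term on the right by the initial-trace hypothesis, leaving a ``closed'' inequality relating $\|w(t)\|_{L^1(B_R)}$ to $\|u^m-v^m\|$ on $B_{2R}\times(0,t)$ plus a term that must be shown to vanish as $R\to\infty$.

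The second and decisive step is to absorb/iterate this inequality across a sequence of balls $B_{R_k}$ with $R_k\to\infty$ and to show the remainder term disappears; this is where the curvature hypothesis \eqref{eq1b}--\eqref{eq1c} enters. The point is that under \eqref{eq1b} one has, via the Laplacian comparison theorem, $\Delta r \le (n-1)\psi'(r)/\psi(r)$ in the barrier sense, so one can build radial cut-off functions $\xi_R$ with $|\Delta \xi_R|$ controlled by $\psi$-dependent quantities; condition \eqref{eq1c} is precisely what guarantees that a telescoping/Gronwall argument over the balls $B_{R_k}$ closes and forces $w\equiv 0$. Concretely, I would choose $\xi_R$ solving (approximately) $\Delta \xi_R \approx -\mathbf 1_{B_R}/(\text{something involving }\psi)$ — in the model case the natural choice is built from the function $\Phi(r)=\int_0^r\psi(\rho)^{1-n}\!\int_0^\rho\psi(s)^{n-1}ds\,d\rho$, whose divergence is \eqref{eq1c} — and use the integrability of the diffusion term (here one needs an a-priori local bound on $\int\!\!\int |u^m-v^m|$, which follows from $u,v\in L^1_{\mathrm{loc}}(M\times[0,\infty))$ and the fact that $r\mapsto r^m$ is sublinear for $m<1$, together with a smoothing/Aronson--Bénilan-type bound ensuring local integrability of $u^m$). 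Running the iteration on $[0,T]$ for arbitrary $T$ yields $w=0$ a.e.\ on $M\times(0,T)$, hence on $M\times(0,+\infty)$.

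The main obstacle, and the place where the Riemannian geometry genuinely bites, is the second step: constructing the cut-off functions so that the remainder term in the Herrero--Pierre estimate is governed exactly by the quantity appearing in \eqref{eq1c}, and then verifying that the iteration over $\{B_{R_k}\}$ closes. In $\mathbb R^n$ this is easy because balls grow polynomially and one can take $\xi_R(x)=\zeta(r(x)/R)$ with $|\Delta\xi_R|\lesssim R^{-2}$; on $M$, with possibly exponential (or faster) volume growth, a naive rescaled cut-off fails, and one must instead let the \emph{shape} of $\xi_R$ be dictated by $\psi$ rather than by dilation. Handling the cut locus (so that the comparison inequality $\Delta r\le (n-1)\psi'/\psi$ is used only in the distributional/barrier sense, or via Calabi's trick) is a technical but standard nuisance that must be dealt with carefully. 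A secondary, milder difficulty is justifying the formal Kato-type computation leading to Propositions \ref{prop1}--\ref{prop2} at the level of strong solutions (approximation by the sign function regularised, controlling boundary terms), but since those propositions are assumed available from earlier in the paper, the real work here is purely the geometric cut-off construction and the Gronwall iteration that exploits \eqref{eq1c}.
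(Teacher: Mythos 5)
Your first step --- the Kato/Herrero--Pierre inequality $\partial_t|u-v|\le\Delta|u^m-v^m|$ in $\mathcal D'$ (i.e.\ Proposition \ref{prop2} and \eqref{eq14}) together with sending the initial time to $0^+$ to exploit the hypothesis $|u(\cdot,t)-v(\cdot,t)|\to0$ in $L^1_{\mathrm{loc}}$ --- is correct and is also where the paper starts. The genuine gap is in what you call the ``second and decisive step''. The Herrero--Pierre estimate leaves you with $\bigl[\int_{\Omega_R}|u(t)-v(t)|\,d\mu\bigr]^{1-m}\le\mathcal H_R\,t$, where $\mathcal H_R\simeq\sup_{\Omega_{2R}\setminus\Omega_R}\{|\nabla\phi_R|^2+|\Delta\phi_R|\}\,[\mu(\Omega_{2R}\setminus\Omega_R)]^{1-m}$, and to conclude you would need $\liminf_{R\to\infty}\mathcal H_R=0$ or some Gronwall surrogate. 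Under \eqref{eq1b}--\eqref{eq1c} the manifold may have exponential or faster volume growth (hyperbolic space, or $\mathrm{Ric}\sim-r^2$ are admissible), so $[\mu(\Omega_{2R}\setminus\Omega_R)]^{1-m}$ blows up, while $|\Delta\phi_R|$ cannot be made to decay since the Laplacian comparison only gives $\Delta r\le(n-1)\psi'/\psi$, a quantity that grows at infinity. No choice of $\psi$-adapted cut-off is known to close this iteration --- this is precisely why \cite{BS} must assume the quasi-Euclidean bound \eqref{quasi-euc} and $m>m_c$ --- and the authors state explicitly in the introduction that the strategies of \cite{HP} and \cite{PZ,BS} do not work in their generality. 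You identify this as ``the main obstacle'' but do not resolve it, so the proof is incomplete at its crucial point.

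The mechanism by which \eqref{eq1c} actually enters the paper's proof is entirely different: elliptic rather than parabolic. One multiplies \eqref{eq14} by $e^{-t}$ and integrates over $(0,t_0)$ to obtain, for $W(x):=\int_0^{t_0}|u(x,s)^m-v(x,s)^m|e^{-s}\,ds$, the distributional inequality $\Delta W\ge\alpha\,W^{1/m}$ on all of $M$ (via H\"older and the elementary bound $2^{m-1}|a^m-b^m|\le|a-b|^m$). Then Theorem \ref{tell}, a Keller--Osserman nonexistence result, forces $W\equiv0$. That theorem is proved by comparison with the explicit radial supersolutions $\overline W_R\sim H(R)^{1/(p-1)}[H(R)-H(r)]^{-2/(p-1)}$ on $B_R(o)$, where $H$ is exactly the function you denote by $\Phi$; condition \eqref{eq1c} is used only to guarantee $H(R)\to\infty$, so that these barriers converge locally uniformly to zero. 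In short, the function $\Phi$ you introduce should be redirected from a cut-off construction to an elliptic comparison argument; as written, your outline cannot be completed along the proposed lines.
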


{Note that in the above result the initial condition \eqref{eq12c} is irrelevant. The sole important requirement is that the difference between $ u  $ and $ v $ vanishes in $ L^1_{\mathrm{loc}}(M) $ as $ t \to 0^+ $. Clearly this is the case for strong solutions taking the same initial datum $ u_0 \in L^1_{\mathrm{loc}}(M)$ which also belong to $C([0, +\infty); L^1_{\mathrm{loc}}(M))$.}

For nonnegative initial data belonging to $L^2_{\mathrm{loc}}(M)$, we do not need solutions to be strong in order to establish uniqueness. Indeed, we can show that any nonnegative distributional solution must coincide with the minimal one, constructed in Theorem \ref{distr-sol}. To the best of our knowledge this is new also in the case $M \equiv \mathbb R^n$, since in the literature uniqueness is typically proved for {strong} solutions (see again \cite[Theorem 2.3 and remarks below]{HP}) {or for bounded solutions \cite{PZ,GIM}} only.

\begin{theorem}[Uniqueness of nonnegative distributional solutions]\label{tuni2}
Let the curvature conditions \eqref{eq1b}--\eqref{eq1c} be satisfied and $u_0\in L^2_{{\mathrm{loc}}}(M)$, with $u_0\ge0$. Let $u\in L^2_{\mathrm{loc}}(M\times[0,+\infty))$ be a nonnegative distributional solution of problem \eqref{cauchy}, in the sense of Definition \ref{defsol}. Then $u = \underline u$ almost everywhere in $M\times (0, +\infty)$, where $\underline u$ is the minimal solution provided by Theorem \ref{distr-sol}.
\end{theorem}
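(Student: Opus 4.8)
The plan is to reduce the uniqueness statement to the elliptic nonexistence result of Theorem~\ref{tell}. By Theorem~\ref{distr-sol} we already know that $\underline u\le u$ a.e., so it suffices to prove $u\le\underline u$. Accordingly I would set $w:=u-\underline u\ge0$ and $z:=u^m-\underline u^m\ge0$. Since $m\in(0,1)$, subadditivity of $t\mapsto t^m$ yields the pointwise bound $0\le z\le w^m$, while subtracting the two instances of \eqref{distrib-id} gives $\partial_t w=\Delta z$ in $\mathcal D'(M\times(0,+\infty))$; moreover, because $u$ and $\underline u$ satisfy \eqref{eq12c} with the same datum $u_0$, one has $\int_M w(\cdot,t)\,\psi\,d\mu\to0$ as $t\to0^+$ for every $\psi\in C_c^\infty(M)$. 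Note that the curvature hypotheses \eqref{eq1b}--\eqref{eq1c} will not be used directly in this argument: all of the curvature information is encapsulated in Theorem~\ref{tell}.

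The core step is to integrate the equation for $w$ against a carefully tuned weight in time so as to produce a distributional subsolution of \eqref{eqn1}. Fix $T>1$ and a smooth nonincreasing cut-off $\chi_T$ with $\chi_T\equiv1$ on $[0,T-1]$ and $\chi_T\equiv0$ on $[T,+\infty)$, and put $\phi_T(t):=e^{-t}\chi_T(t)$. The point of this choice is that $0\le\phi_T\le e^{-t}$, that $\int_0^{+\infty}\phi_T\,dt\le1$, and, crucially, that $-\phi_T'\ge\phi_T\ge0$ pointwise. I would then introduce
$$
W_T(x):=\int_0^{+\infty}\phi_T(t)\,z(x,t)\,dt\ \ge\ 0 ,
$$
test $\partial_t w=\Delta z$ against $\zeta(x)\,\phi_T(t)\,\omega_\delta(t)$ with $\zeta\in C_c^\infty(M)$, $\zeta\ge0$, and $\omega_\delta$ a cut-off vanishing near $t=0$, and integrate by parts in $t$. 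Letting $\delta\to0^+$ and using the weak vanishing of $w(\cdot,t)$ to discard the boundary term at $t=0$, this should give $\Delta W_T=\int_0^{+\infty}(-\phi_T')\,w\,dt=:\widetilde w_T$ in $\mathcal D'(M)$. Since $-\phi_T'\ge\phi_T$ one has $\widetilde w_T\ge\int_0^{+\infty}\phi_T\,w\,dt$, whereas from $z\le w^m$, H\"older's inequality with exponents $1/(1-m)$ and $1/m$, and $\int_0^{+\infty}\phi_T\,dt\le1$ one gets $W_T\le\int_0^{+\infty}\phi_T\,w^m\,dt\le\bigl(\int_0^{+\infty}\phi_T\,w\,dt\bigr)^m\le\widetilde w_T^{\,m}$ a.e.\ in $M$. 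Hence $\Delta W_T\ge W_T^{1/m}$ in $\mathcal D'(M)$, that is, $W_T$ is a nonnegative distributional subsolution of \eqref{eqn1} with $p=1/m>1$ and $\alpha=1$. Local integrability $W_T\in L^{1/m}_{\mathrm{loc}}(M)$ is immediate, since $W_T\le\|\phi_T\|_\infty\int_0^T u^m\,dt$ and $|u|^m\le 1+|u|\in L^1_{\mathrm{loc}}$.

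Theorem~\ref{tell} then forces $W_T\equiv0$. Since $\phi_T>0$ on $[0,T-1]$ and $z\ge0$, this gives $z\equiv0$ a.e.\ in $M\times(0,T-1)$, and letting $T\to+\infty$ yields $z\equiv0$, i.e.\ $u^m=\underline u^m$, hence $u=\underline u$ a.e.\ in $M\times(0,+\infty)$. The step I expect to require the most care is the rigorous justification of the integration by parts in time identifying $\Delta W_T$ with $\widetilde w_T$: one only has the weak convergence $\int_M w(\cdot,t)\,\zeta\,d\mu\to0$ at $t=0$ (rather than $L^1_{\mathrm{loc}}$ convergence), so the vanishing of the initial boundary term must be extracted from \eqref{eq12c} alone, and one must check that all the time integrals involved (of $w$ and of $z$ against $\phi_T$ and $-\phi_T'$) are finite for a.e.\ $x$. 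Everything else is the elementary but decisive observation that the exponential weight makes $\phi_T$ and $-\phi_T'$ comparable, which is exactly what allows the Jensen/H\"older estimate to close and to reproduce the supercritical exponent $1/m$ needed in order to invoke Theorem~\ref{tell}.
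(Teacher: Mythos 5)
Your argument is correct and is essentially the paper's own proof: the paper likewise starts from $u\ge\underline u$ (Theorem \ref{distr-sol}), integrates $\partial_t(u-\underline u)=\Delta(u^m-\underline u^m)$ against the weight $e^{-s}$ on $(0,t_0)$ with the initial boundary term killed via \eqref{eq12c} and a time cut-off, combines H\"older with the elementary inequality $2^{m-1}|a^m-b^m|\le|a-b|^m$ (your subadditivity bound $u^m-\underline u^m\le(u-\underline u)^m$ plays the same role) to obtain $\Delta W\ge\alpha W^{1/m}$, and then invokes Theorem \ref{tell} with $p=1/m$. The only point worth tightening is your justification of $W_T\in L^{1/m}_{\mathrm{loc}}(M)$: the bound via $|u|^m\le 1+|u|$ only yields $L^1_{\mathrm{loc}}$, but the required integrability follows at once from your own chain $W_T\le\widetilde w_T^{\,m}$ with $\widetilde w_T\in L^1_{\mathrm{loc}}(M)$, or alternatively from Proposition \ref{vwmv}, which upgrades $L^1_{\mathrm{loc}}$ subharmonic functions to $L^\infty_{\mathrm{loc}}$.
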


Both our uniqueness results rely on a crucial nonexistence theorem for the nonlinear elliptic equation \eqref{eqn1}, in the spirit of Keller and Osserman \cite{K,O}. Let us emphasize that the latter is of independent interest and the proof we will provide is self-contained and does not exploit methods of parabolic equations.

\begin{theorem}[Nonexistence for the elliptic equation]\label{tell} Let $p>1$, $ \alpha>0 $ and the curvature conditions \eqref{eq1b}--\eqref{eq1c} be satisfied. Then:

\noindent (i) there exists no nonnegative, {nontrivial}, distributional subsolution to \eqref{eqn1};

\noindent (ii) there exists no {nontrivial} distributional solution of \eqref{eqn1}.

\end{theorem}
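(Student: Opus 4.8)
The plan is to prove part (i) — nonexistence of nonnegative nontrivial distributional subsolutions — and then deduce part (ii) from Kato's inequality exactly as announced in the excerpt. For part (i) I would argue by contradiction: suppose $W \geq 0$, $W \not\equiv 0$, $W \in L^p_{\mathrm{loc}}(M)$ satisfies $\Delta W \geq \alpha W^p$ in $\mathcal D'(M)$. The heart of the argument is a Keller–Osserman-type a priori estimate obtained by testing the differential inequality against a cleverly chosen family of cutoff functions built out of the comparison function $\psi$ from \eqref{eq1b}. Concretely, I would fix the reference point $o$, work on geodesic balls $B_R = B_R(o)$, and for a smooth decreasing $\eta_R$ supported in $B_R$ with $\eta_R \equiv 1$ on $B_{R/2}$, test against $\eta_R^{q}$ for a suitable large power $q = q(p)$. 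Since $W$ is merely in $L^p_{\mathrm{loc}}$, I first need the Laplacian comparison theorem under \eqref{eq1b}, namely $\Delta r \leq (n-1)\psi'(r)/\psi(r)$ on $M \setminus(\{o\}\cup\operatorname{cut}(o))$, to control the Laplacian of radial test functions; the cut-locus can be handled in the usual way since $r$ is superharmonic-compatible (Calabi's trick / the distributional inequality for $\Delta r$ holds on all of $M$).

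The key analytic step: integrating $\int_M (\Delta W)\,\varphi\,d\mu = \int_M W\,\Delta\varphi\,d\mu$ against $\varphi = \eta_R^q$ gives
\begin{equation*}
\alpha \int_M W^p\, \eta_R^q \, d\mu \leq \int_M W\, \Delta(\eta_R^q)\,d\mu = \int_M W\, \big( q\,\eta_R^{q-1}\Delta\eta_R + q(q-1)\,\eta_R^{q-2}|\nabla\eta_R|^2 \big)\, d\mu.
\end{equation*}
I would choose $\eta_R$ radial, $\eta_R(x) = \chi(r(x)/R)$ (or, better, a profile adapted to $\psi$ so that the error terms are integrable against the volume form $\psi(r)^{n-1}$), so that $|\nabla\eta_R| \leq C/R$ and, using the Laplacian comparison, $\Delta\eta_R \geq -C\big(1/R^2 + \psi'(r)/(\psi(r)R)\big)$ on the annulus $B_R\setminus B_{R/2}$ and $0$ elsewhere. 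Then Young's inequality with exponents $p$ and $p'=p/(p-1)$ absorbs the factor $W$ on the right against $\alpha W^p \eta_R^q/2$ on the left — this forces the choice $q - q/p \geq q - p' \geq 0$ i.e. $q \geq p' $, and the standard bookkeeping $q \geq 2p/(p-1)$ makes all powers of $\eta_R$ nonnegative — leaving
\begin{equation*}
\alpha \int_{B_{R/2}} W^p\, d\mu \leq \frac{\alpha}{2}\int_M W^p \eta_R^q\,d\mu + C_\alpha \int_{B_R\setminus B_{R/2}} \Big( \frac{1}{R^2} + \frac{\psi'(r)}{\psi(r)\,R}\Big)^{p'} d\mu.
\end{equation*}
After absorption this yields $\int_{B_{R/2}} W^p\,d\mu \leq C_\alpha \, \Xi(R)$ where $\Xi(R)$ is the remainder integral. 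The point is now to show $\Xi(R) \to 0$ (or at least stays bounded, then refine) along a sequence $R_j \to \infty$, using precisely the volume-growth information encoded in \eqref{eq1c}; this would give $\int_M W^p\,d\mu = 0$, hence $W \equiv 0$, the desired contradiction. If the naive estimate only gives boundedness of $\Xi(R)$, the remedy is the classical Keller–Osserman iteration: once $W \in L^p(M)$ globally, re-run the test-function argument to get $\int_M W^p \eta_R^q\,d\mu \to 0$ by dominated convergence on the remainder while the main term converges to $\int_M W^p\,d\mu$, closing the loop. One then upgrades $W \in L^p_{\mathrm{loc}}$ to the needed local boundedness via De Giorgi–Nash–Moser (the subsolution $W$ satisfies a good differential inequality), but as the Remark stresses, ultimately only $L^p_{\mathrm{loc}}$ is used.

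The main obstacle I anticipate is making the error term $\Xi(R)$ genuinely go to zero — the inequality $|\nabla \eta_R| \le C/R$ alone is too crude when the volume of annuli grows like $\psi(R)^{n-1}$, which can be exponential, so the factor $R^{-p'}$ does not obviously win. The resolution, and the place where \eqref{eq1c} enters essentially, is to abandon the self-similar cutoff $\chi(r/R)$ in favor of a profile $\eta = \phi(r)$ where $\phi$ solves (or is modeled on) the ODE associated with the operator $r \mapsto \psi(r)^{-(n-1)} \frac{d}{dr}\big(\psi(r)^{n-1}\frac{d\phi}{dr}\big)$, i.e. $\phi(r) = 1 - \big(\int_{R_0}^{r}\psi(\rho)^{-(n-1)}\,d\rho\big)\big/\big(\int_{R_0}^{R}\psi(\rho)^{-(n-1)}\,d\rho\big)$ — but since \eqref{eq1c} says a \emph{different} integral diverges, I expect the correct choice to involve the function $G(r) := \int_0^r \big(\int_0^\rho \psi(s)^{n-1}ds\big)\psi(\rho)^{-(n-1)}\,d\rho$, whose divergence is exactly hypothesis \eqref{eq1c}: taking $\eta_R = \max\{0, \min\{1, 2 - G(r)/G(R)\}\}$ (harmonic-capacity-type cutoff tuned to the drifted radial operator) makes the weighted Dirichlet-type energy $\int |\nabla\eta_R|^{p'}(\cdots)\,d\mu$ controllable by $G(R)^{-1}\to 0$. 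Verifying that this particular cutoff also controls the $\Delta\eta_R$ term after the Young-inequality split — i.e. that the zeroth-order "comparison" term $\Delta\eta_R$ produced by the Laplacian comparison is subordinate to the gradient term — is the delicate computation, and is presumably where the authors' careful construction of cutoffs (in the spirit of \cite{BS, GIM}) does the real work.
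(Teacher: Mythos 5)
Your proposal follows the nonlinear-capacity (test-function) route, whereas the paper proves Theorem \ref{tell} by the Osserman barrier method, and the difference is not cosmetic: the test-function route has two obstructions that I do not see how to overcome under the stated hypotheses. First, to bound $\int_M W\,\Delta(\eta_R^q)\,d\mu$ from above you need an upper bound on the positive part of $\Delta\eta_R$ for a radially \emph{decreasing} cutoff $\eta_R=\chi(r)$, $\chi'\le 0$; but the Laplacian comparison theorem under the Ricci lower bound \eqref{eq1b} gives only $\Delta r\le (n-1)\psi'/\psi$, which combined with $\chi'\le 0$ yields a \emph{lower} bound on $\Delta\eta_R=\chi''+\chi'\,\Delta r$. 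Where $\Delta r$ is very negative (which \eqref{eq1b}, being only a lower curvature bound, does not exclude), the term $\chi'\,\Delta r$ is large and positive and is not controlled. This is exactly why \cite{BS} must build special Laplacian cutoffs under the much stronger decay condition \eqref{quasi-euc}. Second, even granting ideal cutoffs, after Young's inequality the remainder $\Xi(R)$ integrates $(|\nabla\eta_R|^2+|\Delta\eta_R|)^{p'}$ over an annulus whose volume under \eqref{eq1b}--\eqref{eq1c} can grow exponentially or faster (already on $\mathbb{H}^n$ one has $H(R)\sim R$ but $\mu(B_R)\sim e^{(n-1)R}$, so $\Xi(R)\to\infty$). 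The capacity method intrinsically requires polynomial-type volume growth for $\Delta W\ge \alpha W^p$, so it cannot reach the generality of the theorem; your fallback iteration never starts because the first step yields no bound.

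The paper instead exploits that the one-sided Laplacian comparison is exactly right for radially \emph{increasing} functions: with $H(r)=\int_0^r\psi(\rho)^{1-n}\int_0^\rho\psi^{n-1}\,d\rho$ (your $G$), Lemma \ref{nlem} produces the explicit supersolution $\overline{W}\!_R=\mathsf{C}\,H(R)^{\frac1{p-1}}[H(R)-H(r)]^{-\frac2{p-1}}$ on $B_R(o)$, which blows up at $\partial B_R(o)$, satisfies $\Delta\overline{W}\!_R\le\overline{W}\!_R^{\,p}$ because $\overline{W}\!_R'\ge 0$, and tends to zero locally uniformly as $R\to\infty$ precisely because \eqref{eq1c} says $H(R)\to\infty$. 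The proof then shows $W\in L^\infty_{\mathrm{loc}}$ via a Li--Schoen mean-value inequality adapted to distributional subsolutions (Proposition \ref{vwmv}), compares $W\le\overline{W}\!_R$ on regular approximating domains by a duality argument, and lets $R\to\infty$. You correctly identified the role of $H$ and the deduction of (ii) from (i) via Kato's inequality, but you use $H$ to build a cutoff rather than a barrier; the direction of monotonicity is what makes the curvature hypothesis usable, so the core of your argument would need to be replaced.
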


\subsection{Plan of the paper} The paper is organized as follows. In Section \ref{basics} we provide a concise adaptation to the present setting of the Herrero-Pierre estimates established in \cite{HP}, that will allow us to prove Theorem \ref{strong-sol}. Section \ref{sect-minimal} contains some technical but key tools involving an integration-by-parts formula for nonsmooth (distributional) supersolutions to suitable parabolic equations that are relevant to our purposes, which will first be qualitatively discussed in Subsection \ref{intro} and then precisely addressed in Subsection \ref{ipp}. After the construction of a candidate minimal solution for nonnegative initial data in Subsection \ref{constr-min}, such tools will be exploited in Subsection \ref{local-comp} to show a local comparison principle, which will permit us to infer that the constructed solution is indeed minimal, namely Theorem \ref{distr-sol}. The nonexistence results for the elliptic problems \eqref{eqn1} and \eqref{eqn1-subsol}, i.e.~Theorem \ref{tell}, are proved in Section \ref{Ellptic}. The latter will be employed in Section \ref{uniq} to carry out the proofs of Theorems \ref{tuni1} and \ref{tuni2}.

\section{Herrero-Pierre estimates and existence of general solutions}\label{basics}

The aim of this section is to first introduce some basics of {exhaustion functions}, which on general manifolds allow one to replace balls (that may not be regular enough), and then establish some key local estimates for (approximate) solutions to \eqref{cauchy} that will be crucial in order to prove Theorems \ref{strong-sol} and \ref{distr-sol}.

\subsection{Exhaustion function and regularized distance}\label{regdist}
By well-known results, for which we refer e.g.~to \cite[Propositions 2.28, 5.47 and Theorem 6.10]{Lee}, on any connected, noncompact, $ n $-dimensional Riemannian manifold $ M $ there exists a regular \emph{exhaustion function}, namely a (surjective) smooth function $ \mathcal{E} : M \to [0,\infty) $ having the following property: for almost every $ R \in (0,\infty) $ the sublevel set
\begin{equation}\label{sub}
{\Omega}_R := \left\{ x \in M : \ \mathcal{E}(x) < R \right\}
\end{equation}
is a regular precompact domain of $M$, i.e.~$\overline{\Omega}_R $ is an $n$-dimensional compact submanifold with boundary which is properly embedded in $ M $. By construction, the boundary of  $ \Omega_R $ equals the level set $ \left\{ x \in M : \ \mathcal{E}(x) = R \right\} $, and its outward-pointing normal field with respect to $ \Omega_{R} $ is provided by $ x \mapsto \nabla \mathcal{E}(x) / \left| \nabla \mathcal{E}(x) \right| $.
We call all such $R$ \emph{regular value} for $ \mathcal{E} $, having in mind Sard's Theorem \cite[Theorem 6.10]{Lee}. The term ``{exhaustion}'' comes from the fact that, by the definition, for every strictly increasing sequence $ R_{k} \to \infty $ we have
$$
\overline{\Omega}_{R_k} \Subset \Omega_{R_{k+1}}  \qquad \text{and} \qquad \bigcup\limits_{k=1}^{\infty} \Omega_{R_k} = M \, .
$$
More in general, we say that a sequence of open sets $ \{ D_k \} \subset M $ is a \emph{regular exhaustion} of $ M $ if each $  {D}_k $ is a regular precompact domain and it satisfies
$$
\overline{D}_k \Subset D_{k+1}  \qquad \text{and} \qquad \bigcup\limits_{k=1}^{\infty} D_{k} = M \, .
$$
In fact, {if $M$ is complete}, we can assume that for every $\varepsilon>0$ there exists an exhaustion function $\mathcal{E}_{\varepsilon}$ as above which in addition fulfills
\begin{equation}\label{lev}
\left| \mathcal{E}_{\varepsilon}(x) - r(x) \right| < \varepsilon \qquad \forall x \in M \, ,
\end{equation}
where $ M \ni x \mapsto r(x) := \operatorname{d}(x,o) $ stands for the geodesic distance from a (fixed) reference point $ o \in M $. This is a consequence of \cite[Proposition 2.1]{GW}, since $ r(x)$ is a $1$-Lipschitz function. For every $R>0$, let $\Omega_{R, \varepsilon}$ be the sublevel set
\begin{equation}\label{neq20}
\Omega_{R, \varepsilon}:= \left\{x \in M : \ \mathcal{E}_{\varepsilon}(x) < R \right\}  .
\end{equation}
In particular, by virtue of \eqref{lev} we deduce that
\begin{equation}\label{neq8}
B_{R-\varepsilon}(o) \subset \Omega_{R, \varepsilon} \subset B_{R+\varepsilon}(o) \qquad \forall R> \varepsilon \, ,
\end{equation}
where $ B_\rho(o) $ denotes the geodesic ball of radius $ \rho>0 $ centered at $ o $. Sard's Theorem (we refer again to \cite[Proposition 5.47 and Theorem 6.10]{Lee}) guarantees that there exists a negligible set $\mathcal N_{\varepsilon}  \subset (0,\infty) $ such that for every $ R\in (0, \infty)\setminus \mathcal N_{\varepsilon}$ the sublevel set $ \Omega_{R, \varepsilon} $ is indeed a regular precompact domain of $M$.

\subsection{Existence proof through Herrero-Pierre estimates}\label{HP}

In order to establish the analogues of the Euclidean Herrero-Pierre estimates (we refer in particular to \cite[Lemma 3.1]{HP} and the beginning of the proof of \cite[Theorem 2.3]{HP}), first of all it is important to provide a suitable family of regular \emph{cut-off} functions. This can easily be done as follows. Fix any $\varepsilon_0 > 0$ and set $\mathcal E\equiv \mathcal E_{\varepsilon_0}$ and $ \Omega_{R} \equiv \Omega_{R, \varepsilon_0} $ for every $ R > 0 $. Let $\phi\in C^\infty([0, \infty))$ be any function satisfying
$$
0 \leq \phi  \leq 1 \, , \qquad \phi = 1 \quad \text{in } [0,1] \, , \qquad \phi=0 \quad \text{in } [2, \infty) \, .
$$
For every $R>0$, we put
\begin{equation}\label{eqn2}
\phi_R(x):= \phi\!\left(\frac{\mathcal E(x)}{R}\right) \qquad \forall x\in M\, .
\end{equation}

We are now in position to prove the claimed estimates of Herrero-Pierre type on the general class of manifolds we deal with. Since the techniques employed are basically the same as in \cite{HP}, only a concise argument will be provided for the reader's convenience.

\begin{proposition}
	\label{prop1}
Let $ m \in (0,1) $. Let $u, v\in L^1_{\mathrm{loc}}(M\times (0,+ \infty))$ with $u\geq v$. Suppose that
\begin{equation}\label{eq1}
u_t=\Delta u^m \quad \text{and} \quad v_t=\Delta v^m \qquad \text{in } \mathcal D'(M\times (0,+\infty)) \, .
\end{equation}
Let $ R > 0$. Then the following estimate holds:
\begin{equation}\label{eq3}
\left[\int_{\Omega_R}\left[u(x,t)-v(x,t)\right] d\mu \right]^{1-m} \leq \left[\int_{\Omega_{2 R}}\left[u(x,s)-v(x,s)\right] d\mu \right]^{1-m} + \mathcal{H}_{R} \left|t-s\right|
\end{equation}
for almost every $t,s\in (0, +\infty)$, where
\begin{equation}\label{eq4}
\mathcal H_{R} := \kappa_m \sup_{\Omega_{2R}\setminus \Omega_R }\left\{\left|\nabla \phi_R\right|^2+\left|\Delta \phi_R\right|\right\} \left[ \mu\!\left( \Omega_{2R}\setminus \Omega_R \right) \right]^{1-m}  ,
\end{equation}
the constant $\kappa_m>0$ depending only on $m$.
\end{proposition}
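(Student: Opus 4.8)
The plan is to mimic the classical Herrero--Pierre computation, using the cut-off functions $\phi_R$ in place of the Euclidean ones and exploiting the convexity/concavity structure provided by the exponent $1-m$. First I would work with the difference $w := u - v \ge 0$, which formally satisfies $w_t = \Delta(u^m - v^m)$ in $\mathcal D'(M\times(0,+\infty))$. Testing this identity against $\phi_R^2$ (suitably regularized in time), one gets
\begin{equation*}
\frac{d}{dt}\int_M w(x,t)\,\phi_R^2 \,d\mu = \int_M \left(u^m - v^m\right) \Delta\!\left(\phi_R^2\right)\,d\mu .
\end{equation*}
The key algebraic point, exactly as in \cite{HP}, is the elementary inequality: for $a \ge b \ge 0$ and $m \in (0,1)$ one has $0 \le a^m - b^m \le (a-b)^m$, so that $|u^m - v^m| \le w^m$ pointwise. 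Since $\Delta(\phi_R^2) = 2\phi_R \Delta\phi_R + 2|\nabla\phi_R|^2$ is supported in $\Omega_{2R}\setminus\Omega_R$ and bounded there by $C\sup_{\Omega_{2R}\setminus\Omega_R}\{|\nabla\phi_R|^2 + |\Delta\phi_R|\}$, Hölder's inequality with exponents $1/m$ and $1/(1-m)$ on the set $\Omega_{2R}\setminus\Omega_R$ gives
\begin{equation*}
\left|\frac{d}{dt}\int_M w\,\phi_R^2\,d\mu\right| \le C\,\sup_{\Omega_{2R}\setminus\Omega_R}\left\{|\nabla\phi_R|^2 + |\Delta\phi_R|\right\} \left[\mu\!\left(\Omega_{2R}\setminus\Omega_R\right)\right]^{1-m} \left[\int_{\Omega_{2R}} w\,d\mu\right]^{m} .
\end{equation*}

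Next I would turn this differential inequality for $y(t) := \int_M w(x,t)\,\phi_R^2\,d\mu$ into the claimed estimate. Writing $Y(t) := \int_{\Omega_{2R}} w(x,t)\,d\mu \ge y(t)$, the bound above reads $|y'(t)| \le C_R\, Y(t)^m$ with $C_R := \kappa_m \sup_{\Omega_{2R}\setminus\Omega_R}\{|\nabla\phi_R|^2+|\Delta\phi_R|\}[\mu(\Omega_{2R}\setminus\Omega_R)]^{1-m}$; however, to close the argument one wants the right-hand side in terms of $y$ itself, and this is where the two-ball structure enters. The standard trick is to observe that the same computation performed with $\phi_{2R}$ (or directly with the characteristic function of $\Omega_{2R}$, justified by an approximation) yields that $Y$ satisfies an analogous inequality, and then to run a bootstrap/iteration: one shows that $y(t)^{1-m} - C_R(1-m)|t-s| \le Y(s)^{1-m}$ by integrating $\tfrac{d}{dt}\big(y(t)^{1-m}\big) = (1-m)y(t)^{-m}y'(t)$ and using $y \le Y$ to absorb the $Y^m$ on the right against the $y^{-m}$ factor — more precisely, since $y(t)^{-m} Y(t)^m \ge 1$ fails in general, one instead estimates along the interval by controlling how much $Y$ itself can grow, exploiting that $w\ge 0$ makes $Y$ monotone-type. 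Following \cite{HP} verbatim, the cleanest route is: fix $s$, suppose $t > s$ (the case $t<s$ is symmetric), and use the inequality to get $y(t) \le y(s) + C_R \int_s^t Y(\tau)^m\,d\tau$; combined with the companion estimate on a slightly larger ball one closes a Gronwall-type loop producing the additive-remainder bound $\big[\int_{\Omega_R} w(t)\big]^{1-m} \le \big[\int_{\Omega_{2R}} w(s)\big]^{1-m} + \mathcal H_R|t-s|$.

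As for rigor, since $u,v$ are only $L^1_{\mathrm{loc}}$ distributional solutions, the identity $w_t = \Delta(u^m-v^m)$ must be tested against space-time test functions of the form $\phi_R(x)^2\,\eta_\delta(t)$ where $\eta_\delta$ is a smooth approximation of the indicator of $[s,t]$, and then one passes to the limit $\delta \to 0$ using that $t,s$ are Lebesgue points of $\tau \mapsto \int_M w\,\phi_R^2\,d\mu$; this is why the estimate is stated for a.e.\ $t,s$. The fact that $\overline{\Omega}_R$ and $\overline{\Omega}_{2R}$ are regular precompact domains (true for $\varepsilon_0$-a.e.\ choice of radii, by Sard) guarantees $\phi_R \in C_c^\infty(M)$ with $\mathrm{supp}(\nabla\phi_R) \subset \overline{\Omega}_{2R}\setminus\Omega_R$, so all integrals are finite and the supremum in \eqref{eq4} is attained over a compact set.

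\textbf{Main obstacle.} I expect the genuinely delicate point to be the passage from $|y'(t)| \le C_R\, Y(t)^m$ (right-hand side on the \emph{larger} ball) to the self-contained estimate $y(t)^{1-m} \le Y(s)^{1-m} + C_R(1-m)|t-s|$: naively one only controls $y$ by $Y$, not $Y$ by $y$, so one cannot directly integrate. The resolution in \cite{HP} uses a scaling/comparison argument specific to how the cut-offs nest — essentially applying the differential inequality on a continuum of intermediate balls $\Omega_{\rho}$, $R \le \rho \le 2R$, and optimizing — and since the paper explicitly says "only a concise argument will be provided" and "the techniques employed are basically the same as in \cite{HP}", I would follow that iteration scheme closely rather than reinventing it, taking care only that the Riemannian cut-offs $\phi_R$ built from the exhaustion function $\mathcal E$ enjoy the same support and derivative-localization properties as their Euclidean counterparts, which they do by \eqref{lev}--\eqref{neq8} and the construction \eqref{eqn2}.
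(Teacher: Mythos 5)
The gap is real, and it sits exactly where you flagged it — but the resolution is not the one you guess. The Herrero--Pierre lemma requires no iteration over intermediate balls, no companion estimate on a larger ball, and no Gronwall loop: the differential inequality closes in terms of the weighted quantity $y(t):=\int_M w\,\psi\,d\mu$ itself, provided you apply H\"older's inequality \emph{with the weight $\psi^m$ inserted}. Writing $w^m\,|\Delta\psi| = (w\psi)^m\cdot \psi^{-m}|\Delta\psi|$ and using exponents $1/m$ and $1/(1-m)$ gives
\[
\int_M w^m\,|\Delta\psi|\,d\mu \;\le\; \left(\int_M w\,\psi\,d\mu\right)^{m}\left(\int_M |\Delta\psi|^{\frac1{1-m}}\,\psi^{-\frac m{1-m}}\,d\mu\right)^{1-m} ,
\]
so that $y'(t)\le C(\psi)\,y(t)^m$ with $C(\psi)$ as in \eqref{eq6}, and $y(t)^{1-m}\le y(s)^{1-m}+(1-m)\,C(\psi)\,|t-s|$ follows by direct integration. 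The two-ball structure of \eqref{eq3} then comes for free from $\int_{\Omega_R}w\,d\mu\le y\le\int_{\Omega_{2R}}w\,d\mu$ (since $\psi=1$ on $\Omega_R$ and $\operatorname{supp}\psi\subset\overline\Omega_{2R}$), not from any nesting or optimization over radii. This is precisely what the paper does, quoting \cite[Proposition 7.3]{BGV} and \cite[Lemma 3.1]{HP}. Your unweighted H\"older step instead produces $Y(t)^m$ with $Y$ taken over the larger ball, after which — as you yourself observe — the argument cannot be closed; the ``continuum of intermediate balls'' scheme you propose to borrow is not the mechanism in \cite{HP} and is left unsubstantiated, so as written the proof does not go through.

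A second, related error: the weighted H\"older trick demands that $\int_M|\Delta\psi|^{1/(1-m)}\psi^{-m/(1-m)}\,d\mu$ be finite, and your choice $\psi=\phi_R^2$ does not achieve this. As the computation \eqref{eq7} shows, taking $\psi=\phi_R^k$ leaves a factor $\phi_R^{(k(1-m)-2)/(1-m)}$ in the integrand, which stays bounded only if $k\ge 2/(1-m)$; the exponent $k=2$ never satisfies this for $m\in(0,1)$, so the negative power of $\psi$ blows up where $\phi_R$ vanishes. One must take a sufficiently high power of the cut-off. The remaining ingredients of your write-up — testing against $\phi_R^k(x)\,\eta_\delta(t)$, working at Lebesgue points of $\tau\mapsto\int_M w\,\psi\,d\mu$ to justify the ``a.e.\ $t,s$'' statement, and relying on Sard's theorem for the regularity of $\Omega_R$ and $\Omega_{2R}$ — are correct and match the paper.
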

\begin{proof}
By the proceeding along the lines of the proof of \cite[Proposition 7.3]{BGV}, which in turn relies on \cite[Lemma 3.1]{HP}, one can show the validity of the following inequality:
\begin{equation}\label{eq5}
\begin{aligned}
\left[\int_{M}\left[u(x,t)-v(x,t)\right] \psi(x) \, d\mu \right]^{1-m} \leq & \left[ \int_{M}\left[u(x,s)-v(x,s)\right] \psi(x) \, d\mu \right]^{1-m} \\ & + (1-m) \, C(\psi) \, |t-s|
\end{aligned}
\end{equation}
for almost every $t,s \in (0, +\infty)$, for any nonnegative $\psi\in C^\infty_c(M)$, where
\begin{equation}\label{eq6}
C(\psi) := 2^{1-m} \left[ \int_M \left| \Delta \psi \right|^{\frac 1{1-m}} \psi^{-\frac m{1-m}} \, d\mu \right]^{1-m} .
\end{equation}
Note that, in contrast to \cite{BGV} and \cite{HP}, here we do not ask solutions to be continuous curves in $L^1_{\mathrm{loc}}(M)$; this is not an issue, since such a requirement was used only to ensure that \eqref{eq5} (and subsequent estimates) holds at {every} $ t,s \ge 0 $, whereas in this case it is enough to consider Lebesgue points of $ u $ and $v$ as curves in $ L^1_{\mathrm{loc}}((0,+ \infty);L^1_{\mathrm{loc}}(M)) $. In order to bound the quantity $C(\psi)$, we make an appropriate choice of $ \psi $. That is, let us pick $\psi \equiv \phi_R^{k}$ for any integer $k \ge {2}/{(1-m)}$, where $\phi_R$ is precisely the cut-off function given in \eqref{eqn2}. We have:
\begin{equation}\label{eq7}
\begin{aligned}
\left|\Delta \psi \right|^{\frac 1{1-m}} \psi^{-\frac m{1-m}} =& \left|k(k-1)\,\phi_R^{k-2}\left|\nabla \phi_R\right|^2+k \, \phi_R^{k-1} \Delta\phi_R\right|^{\frac{1}{1-m}} \phi_R^{-\frac{k m}{1-m}}\\
\leq & \left[k(k-1)\right]^{\frac 1{1-m}} \phi_R^{\frac{k(1-m)-2}{1-m}} \left( \left|\nabla\phi_R\right|^2 + \left|\Delta\phi_R\right| \right)^{\frac1{1-m}} .
\end{aligned}
\end{equation}
In view of \eqref{eq6}, \eqref{eq7}, the support properties of $\phi_R$ and the fact that $ \phi_R \le 1 $, we can therefore infer that
\begin{equation}\label{eq8}
\begin{aligned}
C(\psi)  = C(\phi_R) = & \, 2^{1-m} \left[ \int_M \left| \Delta \psi \right|^{\frac 1{1-m}} \psi^{-\frac m{1-m}} \, d\mu \right]^{1-m} \\
\leq & \, 2^{1-m} \, k(k-1) \sup_{\Omega_{2R}\setminus \Omega_R }\left\{\left|\nabla \phi_R\right|^2+\left|\Delta \phi_R\right|\right\} \left[ \int_{\Omega_{2R}\setminus \Omega_R} d\mu \right]^{1-m} .
\end{aligned}
\end{equation}
Hence, from \eqref{eq5} with $ \psi \equiv \phi_R $ (exploiting again the support properties of $ \phi_R $) and \eqref{eq8},  the thesis follows.
\end{proof}

An analogue of Proposition \ref{prop1} can be shown without assuming $u\geq v$, provided solutions are strong; see the first part of the proof of \cite[Theorem 2.3]{HP} in the Euclidean space, in particular formula (3.32) there.

\begin{proposition}
	\label{prop2}
	Let $ m \in (0,1) $ and $u, v \in L^1_{\mathrm{loc}}(M\times (0,+ \infty))$ satisfy \eqref{eq1}. Suppose in addition that $ u_t , v_t \in L^1_{\mathrm{loc}}(M\times (0,+ \infty)) $. Let $ R>0 $. Then the following estimate holds:
\begin{equation*}\label{eq3bis}
\left[\int_{\Omega_R}\left|u(x,t)-v(x,t)\right| {d}\mu \right]^{1-m} \leq \left[\int_{\Omega_{2 R}} \left|u(x,s)-v(x,s)\right| {d}\mu \right]^{1-m} + \mathcal{H}_{R} \, |t-s|
\end{equation*}
for every $ t,s \in (0,+\infty) $, where the constant $\mathcal H_{R}$ is the same as in \eqref{eq4}.
\end{proposition}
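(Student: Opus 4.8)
The plan is to reproduce, in the present setting, the first part of the proof of \cite[Theorem 2.3]{HP} (the step around formula (3.32) there), using Kato's inequality — which is available on an arbitrary Riemannian manifold, with no curvature assumption — in place of the Euclidean mean-value property. Set $w:=u-v$. Since $u,v$ are strong solutions, $w_t=u_t-v_t\in L^1_{\mathrm{loc}}(M\times(0,+\infty))$ and, in $\mathcal D'(M\times(0,+\infty))$, one has $w_t=\Delta(u^m-v^m)$, with $u^m,v^m\in L^1_{\mathrm{loc}}(M\times(0,+\infty))$ because $|u|^m\le 1+|u|$ (recall $m\in(0,1)$). Two elementary facts will be used: since $s\mapsto\operatorname{sign}(s)|s|^m$ is strictly increasing on $\mathbb R$, $\operatorname{sign}(w)=\operatorname{sign}(u^m-v^m)$ pointwise; and, by concavity of $t\mapsto t^m$, $|u^m-v^m|\le 2^{1-m}|u-v|^m$ everywhere (this is exactly why the factor $2^{1-m}$ appears in \eqref{eq6}). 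The key point is then to establish, for every nonnegative $\psi\in C^\infty_c(M)$, the differential inequality
\[
\frac{d}{dt}\int_M|w(x,t)|\,\psi(x)\,d\mu\ \le\ 2^{1-m}\int_M|w(x,t)|^m\,|\Delta\psi(x)|\,d\mu\qquad\text{for a.e. }t>0\, .
\]

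To prove this I would argue as follows. First, by Fubini, for a.e.\ $x\in M$ the curve $t\mapsto w(x,t)$ lies in $W^{1,1}_{\mathrm{loc}}(0,+\infty)$ with weak derivative $w_t(x,\cdot)$, hence $t\mapsto|w(x,t)|$ lies in $W^{1,1}_{\mathrm{loc}}(0,+\infty)$ with derivative $\operatorname{sign}(w(x,t))\,w_t(x,t)$; integrating in $t$ and applying Fubini again reduces the claim to bounding $\int_M\operatorname{sign}(w(\cdot,t))\,w_t(\cdot,t)\,\psi\,d\mu$ for a.e.\ fixed $t$. Second, slicing the distributional identity $w_t=\Delta(u^m-v^m)$ (with a null set independent of the test function, by a density argument in $C^\infty_c(M)$), for a.e.\ $t>0$ one gets $w_t(\cdot,t)=\Delta g(\cdot,t)$ in $\mathcal D'(M)$, where $g(\cdot,t):=u^m(\cdot,t)-v^m(\cdot,t)\in L^1_{\mathrm{loc}}(M)$ and $\Delta g(\cdot,t)\in L^1_{\mathrm{loc}}(M)$. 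Third, Kato's inequality on $M$ gives $\operatorname{sign}(g)\,\Delta g\le\Delta|g|$ in $\mathcal D'(M)$; testing against $\psi\ge 0$ and recalling $\operatorname{sign}(w)=\operatorname{sign}(g)$ yields $\int_M\operatorname{sign}(w)\,w_t\,\psi\,d\mu\le\int_M|g|\,\Delta\psi\,d\mu\le\int_M|g|\,|\Delta\psi|\,d\mu$, and $|g|\le 2^{1-m}|w|^m$ concludes. From here the argument is verbatim that of Proposition \ref{prop1}: take $\psi\equiv\phi_R^{k}$ with an integer $k\ge 2/(1-m)$, estimate $\int_M|w|^m\,|\Delta\phi_R^k|\,d\mu$ by Hölder with exponents $1/m$ and $1/(1-m)$ exactly as in \eqref{eq7}--\eqref{eq8}, so that $y(t):=\int_M|w(x,t)|\,\phi_R^k\,d\mu$ satisfies $y'(t)\le 2^{1-m}k(k-1)\big[\sup_{\Omega_{2R}\setminus\Omega_R}\{|\nabla\phi_R|^2+|\Delta\phi_R|\}\big]\big[\mu(\Omega_{2R}\setminus\Omega_R)\big]^{1-m}\,y(t)^m$ for a.e.\ $t$. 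Since $w_t\in L^1_{\mathrm{loc}}$, $y$ is absolutely continuous on compact subintervals of $(0,+\infty)$ (as $y(t)-y(s)=\int_s^t\!\int_M\operatorname{sign}(w)\,w_\tau\,\phi_R^k\,d\mu\,d\tau$), so integrating this Bernoulli-type inequality — i.e.\ noting $(y^{1-m})'\le(1-m)\,2^{1-m}k(k-1)[\sup\{\cdots\}][\mu(\cdots)]^{1-m}$ a.e. — gives $y(t)^{1-m}\le y(s)^{1-m}+\mathcal H_R|t-s|$ for all $t,s\in(0,+\infty)$, with $\mathcal H_R$ exactly the constant \eqref{eq4} (the Hölder step, hence the dependence $\kappa_m=(1-m)2^{1-m}k(k-1)$ on $m$, being identical to that of Proposition \ref{prop1}). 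Since $\phi_R\equiv 1$ on $\Omega_R$ and $\operatorname{supp}\phi_R\subset\overline{\Omega_{2R}}$, one has $\int_{\Omega_R}|w(\cdot,\tau)|\,d\mu\le y(\tau)\le\int_{\Omega_{2R}}|w(\cdot,\tau)|\,d\mu$, and the thesis follows.

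The main obstacle is the step establishing the differential inequality above, i.e.\ turning the formal computation $\partial_t|u-v|\le\Delta|u^m-v^m|$ into a rigorous statement at the level of $L^1_{\mathrm{loc}}$ regularity: one must combine the one-variable chain rule for $t\mapsto|w(x,t)|$, the slicing in $t$ of the distributional equation, and Kato's inequality in the space variable, checking that no regularity of $u^m-v^m$ beyond membership (together with its Laplacian) in $L^1_{\mathrm{loc}}$ is needed. Each ingredient is classical, but it is precisely here that one has to verify that Kato's inequality holds on $M$ with no lower bound on the curvature — which is what allows Proposition \ref{prop2}, just like Proposition \ref{prop1}, to be valid on an arbitrary complete manifold. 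Everything after that step is a line-by-line repetition of the Hölder and ODE arguments already carried out in the proof of Proposition \ref{prop1}.
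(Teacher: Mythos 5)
Your proof is correct and follows essentially the same route as the paper's: Kato's inequality applied to $u^m-v^m$ (legitimate precisely because $u_t,v_t\in L^1_{\mathrm{loc}}$ makes $\Delta u^m,\Delta v^m$ locally integrable) yields $\partial_t|u-v|\le\Delta|u^m-v^m|$ in the distributional sense, after which the Hölder/Bernoulli argument with $\psi=\phi_R^k$ from Proposition \ref{prop1} closes the estimate with the same constant $\mathcal H_R$. The additional care you take in slicing the equation in time and in justifying the absolute continuity of $t\mapsto\int_M|u-v|\,\psi\,d\mu$ — which is what upgrades the conclusion from ``a.e.\ $t,s$'' to ``every $t,s$'' — is exactly the content the paper compresses into its appeal to \cite[Lemma A]{Kato} and the phrase ``arguing exactly as in the proof of Proposition \ref{prop1}''.
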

\begin{proof}
Since $ u_t $ and $ v_t $, thus $ \Delta u^m $ and  $ \Delta v^m $, are locally integrable functions, we can apply Kato's inequality \cite[Lemma A]{Kato} to infer that
\[
-\Delta \left|u^{m} -v^{m} \right| \leq -\operatorname{sign}(u-v) \, \Delta \left(u^{m}- v^{m}\right) \qquad \text{in } \mathcal D'(M\times (0, +\infty))  \, .
\]
Thus, using \eqref{eq1}, we obtain:
\begin{equation}\label{eq14}
\frac{\partial}{\partial t}\left|u-v\right| \le\Delta\left| u^{m} - v^{m} \right| \qquad \text{in } \mathcal D'(M\times (0, +\infty)) \, .
\end{equation}
As a consequence, for any nonnegative $\psi\in C^\infty_c(M)$ we have:
$$
\begin{aligned}
\frac{d}{dt} \int_M \left|u(x,t)-v(x,t) \right| \psi(x) \, d\mu  & \leq \int_M \left| u(x,t)^{m} - v(x,t)^{m} \right| \left|\Delta \psi(x) \right|  {d} \mu \\
&\leq C(\psi) \left(\int_M  \left|u(x,t)-v(x,t)\right| \psi(x) \, {d} \mu  \right)^m ,
\end{aligned}
$$
where $C(\psi)$ is defined by \eqref{eq6}. This implies the validity of \eqref{eq5} (with moduli inside the integrals), whence the thesis follows by arguing exactly as in the proof of Proposition \ref{prop1}.
\end{proof}

\begin{proof}[Proof of Theorem \ref{strong-sol}]
One can exploit the same strategy as in \cite[Theorem 2.1]{HP}, upon taking advantage of Proposition \ref{prop1}. {We only mention that the basic idea consists of solving problem \eqref{cauchy} starting from approximate initial data $ 	\{ u_{0k} \}  \subset L^1(M) \cap L^\infty(M) $, such that $ \lim_{k \to \infty} u_{0k} = u_0  $ in $ L^1_{\mathrm{loc}}(M) $, for which existence and uniqueness theory is well established (no significant difference occurs compared to the case $ M \equiv \mathbb{R}^n $). The Herrero-Pierre estimates \eqref{eq3} applied to the corresponding sequence of solutions $ \{ u_k \} $, which enjoys monotonicity properties with respect to $k$ if the initial data are chosen properly, play a key role in the passage to the limit as $ k \to \infty $ since they provide $ L^1_{\mathrm{loc}}(M) $ stability.}
\end{proof}

\section{Nonnegative distributional solutions and existence of the minimal one}\label{sect-minimal}

The existence proof of the minimal solution, in the class of nonnegative distributional solutions, requires first the validity of a \emph{local comparison} principle. To this aim, we need to develop some technical tools involving an integration-by-parts formula for \emph{nonsmooth} supersolutions to certain elliptic/parabolic problems.

\subsection{Integration by parts: introduction to the problem}\label{intro}
Before stating the rigorous formula, see Proposition \ref{lem:comp} below, we briefly recall the classical counterpart in the smooth setting and explain its connection with the nonsmooth one. Let $ f \in C^2(M)$ and $ g \in C(M) $. Suppose that we have
\begin{equation}\label{classical-1}
\Delta f \le g \qquad \text{in } M \, .
\end{equation}
Then, for every $ R>0 $ and every nonnegative test function $ \eta \in C^2\!\left(\overline{B}_R(o)\right) $ with $ \eta = 0 $ on $ \partial B_R(o) $, by \eqref{classical-1} and the divergence theorem we obtain:
\begin{equation}\label{classical-2}
\int_{B_R(o)} f \, \Delta \eta \, d\mu - \int_{\partial B_R(o)} f \, \frac{\partial \eta}{\partial \nu} \, d\sigma \le \int_{B_R(o)} g \, \eta \, d\mu =: F(\eta) \, ,
\end{equation}
where $ d\sigma $ stands for the $(n-1)$-dimensional Hausdorff measure on $  \partial B_R(o) $, $ \nu $ is the outward normal direction and we are implicitly assuming that balls are regular sets. In general, we can interpret $ F $ as a functional on $ C_c(M) $, upon extending $ \eta $ to zero outside $ \overline{B}_R(o)  $. Furthermore, on the one hand the first addendum in the left-hand side of \eqref{classical-2} is well defined provided $ f $ merely belongs to $ L^1_{\mathrm{loc}}(M)$; on the other hand, it is not difficult to show that for locally integrable functions the second addendum is also well defined at least for \emph{almost every} $R>0$. Thus, the following natural question arises: can \eqref{classical-2} be proved only upon assuming that $f\in L^1_{\mathrm{loc}}(M)$ and \eqref{classical-1} holds in $ \mathcal{D}^\prime(M) $ with right-hand side in the dual of $ C_c(M) $, i.e.
$$
\int_{M} f\, \Delta \xi \, d\mu \le F(\xi)
$$
for every nonnegative $ \xi \in C^\infty_c(M) $, where $F $ is a continuous functional on $C_c(M)$? We will now show that the answer is indeed positive, provided one replaces $B_R(o)$, which on general manifolds need not be more than Lipschitz regular, by the sublevel sets of a suitable \emph{exhaustion function} of $M$ (recall Subsection \ref{regdist}). This is basically the content of the next Proposition \ref{lem:comp}. Note that in formulas \eqref{p1}--\eqref{p2} below $ f $ and the test functions also depend on time, so that the functional is actually defined on $ C_c(M \times [0,T]) \times  C_c(M \times [0,T]) $. Nevertheless, this is irrelevant to the above discussion.

\subsection{An integration-by-parts formula for merely integrable functions}\label{ipp}

We are ready to state and prove the key result of this section, namely a generalized version of \eqref{classical-2}.

\begin{proposition}\label{lem:comp}
Let $ M $ be a connected, noncompact Riemannian manifold of dimension $n$. Let $ T>0 $ and $ f \in L^1_{\mathrm{loc}}(M \times [0,T]) $ satisfy
\begin{equation}\label{p1}
\int_0^T \int_{M} f \, \Delta \xi \, d\mu dt \le F(\xi,\xi_t) \qquad \forall \xi \in C^2_c(M \times [0,T]): \ \, \xi \ge 0 \, ,
\end{equation}
where $ F $ is a continuous functional on $ C_c(M \times [0,T]) \times  C_c(M \times [0,T]) $. Let $ \mathcal{E} $ be an exhaustion function of $M$ and  $ R_k \to \infty $ be any strictly increasing sequence of regular values for $ \mathcal{E} $. Then there exists a regular exhaustion $ \{ D_k \} \subset  M $, possibly depending on $f$, such that
\begin{equation}\label{double-inclusion}
\overline{\Omega}_{R_k} \Subset D_k \subset \overline{D}_k \Subset \Omega_{R_{k+1}}
\end{equation}
and
\begin{equation}\label{p2}
\begin{gathered}
\int_0^T \int_{D_k} f \, \Delta \eta \, d\mu dt - \int_0^T \int_{\partial D_k} f \, \frac{\partial \eta}{\partial \nu} \, d\sigma dt \le F\!\left(\overline{\eta},\overline{\eta}_t\right) \\
\forall \eta \in C^2\!\left(\overline{D}_k \times [0,T]\right)\!: \ \, \left.\eta\right|_{\partial D_k \times [0,T] }=0 \, , \ \, \eta \ge 0 \, ,
\end{gathered}
\end{equation}
for every $ k \in \mathbb{N} $, where $ \nu $ is the outward-pointing normal field on $ \partial D_k $, $ d\sigma $ is the $(n-1)$-dimensional Hausdorff measure on $ \partial D_k $ and $ \overline{\eta} $ is the extension of $ \eta $ to $ M \times [0,T] $, set to zero outside $ \overline{D}_k \times [0,T] $.
\end{proposition}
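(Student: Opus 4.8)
\emph{Strategy.} The plan is to take each $D_k$ to be a sublevel set $\Omega_{\rho_k}$ of $\mathcal{E}$, for a carefully chosen regular value $\rho_k\in(R_k,R_{k+1})$, and to deduce \eqref{p2} from \eqref{p1} by testing the distributional inequality against the truncated functions $\xi=\theta_\delta\,\overline{\eta}$, where $\theta_\delta=\Theta_\delta(\mathcal{E})$ is a smooth cut-off depending only on $\mathcal{E}$, equal to $1$ on $\{\mathcal{E}\le\rho_k-2\delta\}$ and to $0$ on $\{\mathcal{E}\ge\rho_k-\delta\}$. Since $\theta_\delta$ vanishes in a whole neighbourhood of $\partial D_k=\{\mathcal{E}=\rho_k\}$, where $\overline{\eta}$ is merely Lipschitz, for small $\delta$ the product $\theta_\delta\,\overline{\eta}$ agrees with $\theta_\delta\,\eta$ near its support (which lies in $D_k$), so it is a genuine nonnegative element of $C^2_c(M\times[0,T])$ and is admissible in \eqref{p1}. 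Expanding $\Delta(\theta_\delta\,\overline{\eta})=\theta_\delta\,\Delta\eta+2\,\nabla\theta_\delta\cdot\nabla\eta+\eta\,\Delta\theta_\delta$ on $D_k$, the whole matter reduces to letting $\delta\to0^+$ in the resulting inequality.

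\emph{Choice of $D_k$ and the two easy limits.} For the boundary integral in \eqref{p2} to be meaningful and for the error terms to be controllable, $\rho_k$ must be chosen so that $f$ possesses a good trace on $\partial D_k\times[0,T]$. I would first fix any auxiliary regular value $\rho^*\in(R_k,R_{k+1})$ and, in a collar neighbourhood of $\{\mathcal{E}=\rho^*\}$, use the gradient flow of $\nabla\mathcal{E}/|\nabla\mathcal{E}|^2$ to identify the nearby level sets $\{\mathcal{E}=\rho^*+s\}$ with $\{\mathcal{E}=\rho^*\}$; under this identification $f$ becomes a function $\widehat{f}\in L^1\big(\{\mathcal{E}=\rho^*\}\times(-\varepsilon^*,\varepsilon^*)\times[0,T]\big)$. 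By Sard's theorem and the (Bochner) Lebesgue differentiation theorem for $L^1\big(\{\mathcal{E}=\rho^*\}\times[0,T]\big)$-valued functions of the real variable $s$, for almost every $s_0$ the number $\rho_k:=\rho^*+s_0$ is a regular value of $\mathcal{E}$ and $s_0$ is a one-sided Lebesgue point of $s\mapsto\widehat{f}(\cdot,s,\cdot)$; I fix one such $s_0$, set $D_k:=\Omega_{\rho_k}$, and (picking the $\rho_k$ increasing and exploiting $R_k<\rho_k<R_{k+1}<\rho_{k+1}$ together with the regularity of $\rho_k$) obtain both \eqref{double-inclusion} and the fact that $\{D_k\}$ is a regular exhaustion. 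The Lebesgue-point property, via the coarea formula, in particular forces $f$ to admit an $L^1$ trace on $\partial D_k\times[0,T]$, so the boundary term of \eqref{p2} is finite. With this choice, two of the three limits are immediate: $\int_0^T\!\int_{D_k}f\,\theta_\delta\,\Delta\eta\to\int_0^T\!\int_{D_k}f\,\Delta\eta$ by dominated convergence, since $\Delta\eta$ is bounded on the compact set $\overline{D}_k\times[0,T]$; and $F(\theta_\delta\,\overline{\eta},\theta_\delta\,\overline{\eta}_t)\to F(\overline{\eta},\overline{\eta}_t)$ by continuity of $F$, because $\theta_\delta\,\overline{\eta}\to\overline{\eta}$ and $\theta_\delta\,\overline{\eta}_t\to\overline{\eta}_t$ uniformly with supports in the fixed compact set $\overline{D}_k\times[0,T]$ — here one crucially uses that $\eta=0$ on $\partial D_k\times[0,T]$, hence also $\eta_t=0$ there, so that $|\eta|$ and $|\eta_t|$ tend to $0$ uniformly on the shrinking shells $\{\rho_k-2\delta\le\mathcal{E}\le\rho_k\}$.

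\emph{The error terms, and the main obstacle.} It then remains to prove
\[
\lim_{\delta\to0^+}\int_0^T\!\!\int_{D_k} f\,\big(2\,\nabla\theta_\delta\cdot\nabla\eta+\eta\,\Delta\theta_\delta\big)\,d\mu\,dt\ =\ -\int_0^T\!\!\int_{\partial D_k} f\,\frac{\partial\eta}{\partial\nu}\,d\sigma\,dt ,
\]
which together with the previous step gives \eqref{p2}. Using $\nabla\theta_\delta=\Theta_\delta'(\mathcal{E})\,\nabla\mathcal{E}$ and $\Delta\theta_\delta=\Theta_\delta''(\mathcal{E})\,|\nabla\mathcal{E}|^2+\Theta_\delta'(\mathcal{E})\,\Delta\mathcal{E}$, all three integrals concentrate on the shell $S_\delta:=\{\rho_k-2\delta\le\mathcal{E}\le\rho_k-\delta\}$, and the coarea formula rewrites them as one-dimensional integrals over $\rho$ of surface integrals on $\Sigma_\rho:=\{\mathcal{E}=\rho\}$. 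Two of these are handled by crude estimates: the $\Theta_\delta'(\mathcal{E})\,\Delta\mathcal{E}$ contribution is $O(1)\cdot(\sup_{\Sigma_\rho}|\eta|)\cdot\int_{\Sigma_\rho}|f|\,d\sigma$, which vanishes because $|\eta|=O(\delta)$ on $S_\delta$ while $(\rho,t)\mapsto\int_{\Sigma_\rho}|f(\cdot,t)|\,d\sigma$ is integrable; and the $2\,\nabla\theta_\delta\cdot\nabla\eta$ contribution, whose $\rho$-kernel $2\,\Theta_\delta'$ has total mass $-2$ on $S_\delta$, converges — using the Lebesgue-point property to replace the slices of $f$ near $\partial D_k$ by its trace — to $-2\int_0^T\!\int_{\partial D_k}f\,|\nabla\mathcal{E}|^{-1}\,\nabla\mathcal{E}\cdot\nabla\eta\,d\sigma\,dt=-2\int_0^T\!\int_{\partial D_k}f\,\frac{\partial\eta}{\partial\nu}\,d\sigma\,dt$, the last identity because $\eta=0$ on $\partial D_k$ forces $\nabla\eta=(\partial\eta/\partial\nu)\,\nu$ there. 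The genuinely delicate contribution is $\int_0^T\!\int f\,\eta\,\Theta_\delta''(\mathcal{E})\,|\nabla\mathcal{E}|^2\,d\mu\,dt$: here $\Theta_\delta''$ scales like $\delta^{-2}$, and $|\eta|=O(\delta)$ brings it down only to size $O(\delta^{-1})$, so a crude estimate against $f\in L^1$ yields mere boundedness rather than convergence. The resolution is to write $\eta=\psi\,(\rho_k-\mathcal{E})$ in a neighbourhood of $\partial D_k$ — legitimate by Hadamard's lemma, as $\eta\in C^2$ vanishes on the regular level set $\partial D_k$, with $\psi$ of class $C^1$ and $\psi|_{\partial D_k}=-|\nabla\mathcal{E}|^{-1}\,\partial\eta/\partial\nu$ — which turns this term into $\int_0^T\!\int_{S_\delta}(\rho_k-\rho)\,\Theta_\delta''(\rho)\,\big(\int_{\Sigma_\rho}f\,\psi\,|\nabla\mathcal{E}|\,d\sigma\big)\,d\rho\,dt$; a one-dimensional integration by parts shows that the kernel $(\rho_k-\rho)\,\Theta_\delta''(\rho)$ has total mass $-1$ on $S_\delta$ (and size $O(\delta^{-1})$), and the Lebesgue-point choice of $\rho_k$ then forces convergence to $-\int_0^T\!\int_{\partial D_k}f\,\psi\,|\nabla\mathcal{E}|\,d\sigma\,dt=+\int_0^T\!\int_{\partial D_k}f\,\frac{\partial\eta}{\partial\nu}\,d\sigma\,dt$. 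Summing the three contributions, $-2\int f\,\partial\eta/\partial\nu+0+\int f\,\partial\eta/\partial\nu=-\int f\,\partial\eta/\partial\nu$, produces exactly the boundary term in \eqref{p2}. I expect this last point — extracting a finite boundary contribution from an apparently singular shell integral, which is precisely why the $f$-dependent choice of $\rho_k$ (not just an arbitrary regular value) is unavoidable — to be the main obstacle; the remaining steps form a careful but essentially routine limiting procedure.
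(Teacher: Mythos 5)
Your proposal is correct and follows essentially the same route as the paper: choose the boundary of $D_k$ via the Bochner--Lebesgue differentiation theorem in a collar parametrization so that $f$ acquires an $L^1$ trace there, test \eqref{p1} with $\xi=(\text{cut-off})\cdot\eta$, expand the Laplacian of the product, and recover the boundary term as $-2+0+1=-1$ copies of $\int f\,\partial\eta/\partial\nu$ coming from the first- and second-order cut-off terms (the paper handles the singular $\psi_h''$ term by integrating by parts in the collar coordinate against $\hat\eta\,\mathcal A$, which is the same computation as your Hadamard factorization $\eta=\psi\,(\rho_k-\mathcal E)$ plus the mass-$(-1)$ kernel identity). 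The one point where your version, taken literally, is fragile is the identification $D_k=\Omega_{\rho_k}$: the level set $\{\mathcal E=\rho_k\}$ need not be entirely contained in the gradient flow-out of the auxiliary level $\Sigma_{\rho^*}$ (e.g.\ if $\mathcal E$ has a local minimum with critical value in $(\rho^*,\rho_k)$, extra components of $\partial\Omega_{\rho_k}$ appear outside the collar, and your shell analysis and trace argument miss them). The paper avoids this by \emph{not} taking $D_k$ to be a sublevel set of $\mathcal E$: it fixes the regular boundary $\partial\Omega_{R_{k+1}}$ and defines $\partial D_k$ as an inner parallel hypersurface at a Lebesgue-point distance $\delta_0$, so that $\partial D_k$ lies by construction inside the tubular neighbourhood where the whole argument takes place; you can repair your construction identically by defining $D_k$ as $\Omega_{\rho^*}$ together with the flow-out region up to time $s_0$, rather than as $\Omega_{\rho^*+s_0}$.
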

\begin{proof}
Given $ \epsilon>0  $ and any regular value $ R $ for $ \mathcal{E} $, let $ \Omega^\epsilon_R $ denote the set of all points of $ \Omega_R $ (defined in \eqref{sub}) whose geodesic distance from $ \partial \Omega_R $ is less than $ \epsilon $:
\begin{equation*}\label{sigma1}
\Omega^\epsilon_R := \left\{ x \in \Omega_R: \ \operatorname{d}(x,\partial\Omega_R) < \epsilon \right\} .
\end{equation*}
Because $ \partial\Omega_R $ is a smooth $(n-1)$-dimensional submanifold of $ M $, we can and will suppose that $ \epsilon $ is so small that the projection $ \pi(x) $ of a point $ x \in \Omega^\epsilon_R $ onto $ \partial \Omega_R $ is single valued, regular and the map $ \Pi $ that with each $ x \in \Omega^\epsilon_R $ associates the pair $ (\pi(x),\mathrm{d}(x,\pi(x))) $ is a diffeomorphism between $ \Omega^\epsilon_R $ and $ \partial \Omega_R \times (0,\epsilon) $. In this way, one can completely describe $ \Omega^\epsilon_R $ by means of the coordinates $ (y,\delta) $, as $ y $ ranges on $ \partial \Omega_R $ and $ \delta $ ranges in $ (0,\epsilon) $. We refer e.g.~to \cite{Foote} for more details on such a local construction (the fact that the setting there is Euclidean is inessential).

Let $ \Sigma := \partial \Omega_R  $. The integrability properties of $f $ yield $ f \in L^1(\Omega_R^\epsilon \times (0,T) ) $, which is equivalent to claiming that the function $\hat{f} : \Sigma \times (0,\epsilon) \times (0,T) \to \mathbb{R} $ defined by
$$
\hat{f}(y,\delta,t) := f\!\left(\Pi^{-1}(y,\delta),t\right) \qquad \forall (y,\delta,t) \in \Sigma \times (0,\epsilon) \times (0,T) \, ,
$$
namely the original function $f$ written in terms of the above coordinate system, belongs to the space $ L^1(\Sigma \times (0,\epsilon) \times (0,T)) $ with respect to the product measure $ d\Sigma \otimes d\delta \otimes dt $, where $ d\Sigma $ stands for the $(n-1)$-dimensional Hausdorff measure on $ \Sigma $ and $ d\delta \otimes dt $ is the standard Lebesgue measure in $ (0,\epsilon) \times (0,T) $. Clearly the actual volume measure of $ \Omega_R^\epsilon $ with respect to $ (y,\delta) $ is not just $ d\Sigma \otimes d\delta $, but it is represented by a regular density $ \mathcal{A}(y,\delta) $ which is bounded and bounded away from zero. As a consequence, $ \hat{f} $ can also be seen as an element of the space $ L^1( (0,\epsilon); L^1(\Sigma \times (0,T) ) ) $; hence, thanks to the Lebesgue differentiation theorem for vector-valued functions \cite[Theorem K.5]{AK}, we deduce that for almost every $ \delta_0 \in (0, \epsilon ) $
\begin{equation*}\label{lebesgue-diff}
\lim_{h \to 0^+} \frac{1}{h} \int_{\delta_0+h}^{\delta_0+2h} \int_0^T \int_{\Sigma} \left| \hat f(y,\delta,t) - \hat f(y,\delta_0,t) \right| d\Sigma dt d\delta =  0 \, ,
\end{equation*}
which is in fact equivalent to
\begin{equation}\label{lebesgue-diff-1}
\lim_{h \to 0^+} \frac{1}{h} \int_{\delta_0+h}^{\delta_0+2h} \int_0^T \int_{\Sigma} \left| \hat f(y,\delta,t) - \hat f(y,\delta_0,t) \right| \mathcal{A}(y,\delta) \, d\Sigma dt d\delta =  0 \, .
\end{equation}
If we let $ \Sigma_0 $ denote the set of points of $ \Omega_R^{\epsilon} $ at distance $ \delta_0 $ from $ \Sigma $, by $ \pi_0(x) $ the projection of $ x \in \Omega_R^\epsilon $ onto $ \Sigma_0 $ and by $ \Sigma_0^h $ the set
$$
\Sigma_0^h := \left\{ x \in \Omega_R^\epsilon : \  h < \operatorname{d}(x,\pi_0(x))  <  2h \right\} \cap  \left\{ x \in \Omega_R^\epsilon : \   \operatorname{d}(x,\pi(x)) > \delta_0 \right\}  ,
$$
then \eqref{lebesgue-diff-1} can be rewritten as
\begin{equation*}\label{lebesgue-diff-2}
\lim_{h \to 0^+} \frac{1}{\mu\!\left(\Sigma_0^h\right)} \int_0^T \int_{\Sigma_0^h} \left| f(x,t) - f(\pi_0(x),t) \right| d\mu dt = 0 \, .
\end{equation*}
Indeed, letting $ \delta $ vary at a fixed $ y \in \Sigma $ is equivalent to moving along the geodesic given by the inward normal direction at $ (y,0) \equiv \pi(x) $ (see e.g.~\cite[Theorem 4.8 (12)]{Fed} in the Euclidean setting), so that the point identified by $ (y,\delta) $ actually represents the projection of $ x $ onto the submanifold $ \Pi^{-1}\!\left( \Sigma \times \{ \delta \} \right) $, for all $ \delta \in (0,\epsilon) $.

Given any (small enough) $ h>0 $, let $ \psi_h:[0,\infty) \to [0,\infty) $ be a smooth cut-off function enjoying the following properties:
\begin{equation}\label{cutoff1}
 0 \le \psi_h \le 1  \, , \qquad \psi_h=0 \quad \text{in } [0,\delta_0 + h] \, , \qquad \psi_h = 1 \quad \text{in } [\delta_0+2h,\infty)
\end{equation}
and
\begin{equation}\label{cutoff1-bis}
\left\| \psi_h^\prime \right\|_\infty \le \frac{c}{h} \, , \qquad \left\| \psi_h^{\prime\prime} \right\|_\infty \le \frac{c}{h^2} \, ,
\end{equation}
for a suitable constant $ c>0 $ independent of $ h $. We can then transplant $ \psi_h $ onto $ \Omega_R^\epsilon $ by setting
\begin{equation*}\label{cutoff2}
\phi_h(x) :=  \psi_h(\mathrm{d}(x,\pi(x)))  \qquad \forall x \in \Omega_R^\epsilon \, ,
\end{equation*}
that is, in the chosen coordinate frame,
\begin{equation*}\label{cutoff3}
\phi_h\!\left(\Pi^{-1}(y,\delta)\right) =:\hat{\phi}_h(y,\delta) = \psi_h(\delta) \qquad \forall (y,\delta) \in \Sigma \times (0,\epsilon) \, .
\end{equation*}
Because $ \delta $ is a geodesic coordinate in $ \Omega_R^\epsilon $ and $ \hat{\phi}_h $ only depends on $ \delta $, the Laplace-Beltrami operator applied to $ \phi_h $ reads (we refer e.g.~to \cite[Formula (3.35)]{BMMP} or to the proof of \cite[Lemma 2.13]{MRS})
\begin{equation}\label{lap-1}
\Delta \phi_h(x) = \psi_h^{\prime\prime}(\delta) +  \mathsf{m}(y,\delta) \, \psi_h^\prime(\delta) \qquad \forall x \equiv (y,\delta) \in \Sigma \times (0,\epsilon) \, ,
\end{equation}
where $ \mathsf{m}(\delta,y) $ is precisely the Laplace-Beltrami operator applied to the distance function $ \delta \equiv \mathrm{d}(x,\pi(x)) $, which is in fact a regular function that coincides with the partial derivative of $ (y,\delta) \mapsto \log \mathcal{A}(y,\delta) $ with respect to $ \delta $. If $ g $ is a $ C^2\big(\big(\Omega_R^\epsilon \setminus \Omega_R^{\delta_0}\big) \times [0,T] \big) $ function and $ \hat{g} $ is its representative with respect to the coordinates $(y,\delta)$ in $ \Omega_R^\epsilon $, it follows that
\begin{equation}\label{grad-grad}
\left\langle \nabla  \phi_h(x)  \, ,  \nabla g(x,t) \right\rangle = \psi_h^\prime(\delta) \, \frac{\partial \hat g}{\partial \delta}(y,\delta,t)  \qquad \forall (x,t) \equiv (y,\delta,t) \in \Sigma \times [\delta_0,\epsilon) \times [0,T] \, ,
\end{equation}
where $ \langle , \rangle $ stands for the inner product in the tangent space of $M$ at $ x $. On the other hand, by the product rule we have
\begin{equation}\label{product}
\Delta \! \left( \phi_h \, g \right) = \phi_h \, \Delta g + 2 \left\langle \nabla \phi_h  \, , \nabla g \right\rangle + g \, \Delta \phi_h \qquad \text{in } \big(  \Omega_R^\epsilon \setminus \Omega_R^{\delta_0} \big)  \times [0,T] \, .
\end{equation}
Let $ D:=\Omega_R \setminus \overline{\Omega}_{R}^{\delta_0} $, so that $ \partial D = \Sigma_0 $, and replace for the moment $ D_k $ with $ D $ in the statement (at the end of the proof we will explain how to pick the sequence $D_k$). From here on we take for granted that the test function $ \eta $ as in the statement, along with $ \eta_t $, can be continuously extended to $0$ in $ \big(M \setminus \overline{D}\big) \times [0,T] $ and $ \phi_h $ can be smoothly extended to $ 0 $ in $ M \setminus \Omega_R $ and to $ 1 $ in $ \Omega_R \setminus \Omega_R^\epsilon $, provided $h$ is small enough. By applying \eqref{p1} to the (admissible) test function $ \xi = \phi_h \eta $, in view of \eqref{product} (with $ g = \eta $) we obtain
\begin{equation}\label{p1-step1}
\int_0^T \int_{M} f \left( \phi_h \, \Delta \eta + 2 \left\langle \nabla \phi_h  \, , \nabla \eta \right\rangle + \eta \, \Delta \phi_h \right) d\mu dt \le F\!\left( \phi_h \eta , \phi_h \eta_t \right) .
\end{equation}
By construction, it is plain that
\begin{equation}\label{p1-step2}
\lim_{h \to 0^+} \int_0^T \int_{M} f \, \phi_h \, \Delta \eta \, d\mu dt = \int_0^T \int_{D} f \, \Delta \eta \, d\mu dt \qquad \text{and} \qquad \lim_{h \to 0^+} F\!\left( \phi_h \eta , \phi_h \eta_t \right) = F\!\left(\overline{\eta},\overline{\eta}_t\right) .
\end{equation}
We now focus on the last two terms of the integral in the left-hand side of \eqref{p1-step1}. As for the first one, by virtue of \eqref{cutoff1} and \eqref{grad-grad} (still with $ g = \eta $) we deduce the identity
\begin{equation}\label{p1-step3}
\int_0^T \int_{M} f \left\langle \nabla \phi_h  \, , \nabla \eta \right\rangle  d\mu dt = \int_{\delta_0+h}^{\delta_0+2h} \int_0^T \int_{\Sigma} \hat f(y,\delta,t) \, \frac{\partial\hat\eta}{\partial \delta}(y,\delta,t) \, \psi_h^\prime(\delta)  \, \mathcal{A}(y,\delta) \, d\Sigma dt d\delta \, ,
\end{equation}
whose right-hand side can be rewritten in the following way:
\begin{equation}\label{p1-step4}
\begin{aligned}
& \int_{\delta_0+h}^{\delta_0+2h} \int_0^T \int_{\Sigma} \hat f(y,\delta,t) \, \frac{\partial \hat \eta}{\partial \delta}(y,\delta,t) \, \psi_h^\prime(\delta)  \, \mathcal{A}(y,\delta) \, d\Sigma dt d\delta \\
= &  \int_{\delta_0+h}^{\delta_0+2h} \int_0^T \int_{\Sigma} \left[ \hat f(y,\delta,t) - \hat f(y,\delta_0,t) \right] \frac{\partial\hat\eta}{\partial \delta}(y,\delta,t) \, \psi_h^\prime(\delta)  \, \mathcal{A}(y,\delta) \, d\Sigma dt d\delta \\
 & + \int_0^T \int_{\Sigma} \hat f(y,\delta_0,t) \left( \int_{\delta_0+h}^{\delta_0+2h} \, \frac{\partial\hat\eta}{\partial \delta}(y,\delta,t) \, \psi_h^\prime(\delta)  \, \mathcal{A}(y,\delta) \, d\delta \right) d\Sigma dt \, .
\end{aligned}
\end{equation}
Thanks to \eqref{cutoff1-bis} and \eqref{lebesgue-diff-1}, it is not difficult to check that the first integral in the right-hand side of \eqref{p1-step4} vanishes as $ h \to 0^+ $:
$$
\begin{aligned}
& \limsup_{h \to 0^+} \int_{\delta_0+h}^{\delta_0+2h} \int_0^T \int_{\Sigma} \left| \hat f(y,\delta,t) - \hat f(y,\delta_0,t) \right| \left| \frac{\partial\hat\eta}{\partial \delta}(y,\delta,t) \, \psi_h^\prime(\delta) \right| \mathcal{A}(y,\delta) \, d\Sigma dt d\delta \\
\le & \, c \left\| \nabla \eta \right\|_\infty \, \lim_{h \to 0^+} \frac{1}{h} \int_{\delta_0+h}^{\delta_0+2h} \int_0^T \int_{\Sigma} \left| \hat f(y,\delta,t) - \hat f(y,\delta_0,t) \right| \mathcal{A}(y,\delta) \, d\Sigma dt d\delta = 0 \, . \\
\end{aligned}
$$
On the other hand, because $ \mathcal{A} $ and $ \frac{\partial \hat \eta}{\partial \delta} $ are locally uniformly continuous in $ \Sigma \times [\delta_0,\epsilon) \times [0,T]  $ and $ \psi_h^\prime(\delta) $ is converging to a Dirac delta centered at $ \delta_0 $ as $ h \to 0^+ $, it follows that
$$
\lim_{h \to 0^+} \int_{\delta_0+h}^{\delta_0+2h} \, \frac{\partial\hat\eta}{\partial \delta}(y,\delta,t) \, \psi_h^\prime(\delta)  \, \mathcal{A}(y,\delta) \, d\delta  = \frac{\partial\hat\eta}{\partial \delta}(y,\delta_0,t) \, \mathcal{A}(y,\delta_0)
$$
uniformly on $ \Sigma \times [0,T] $. Since $ \hat f(\cdot,\delta_0,\cdot) \in L^1(\Sigma \times (0,T)) $, upon recalling \eqref{p1-step3} we can thus assert that
\begin{equation}\label{p1-step5}
\lim_{h \to 0^+} \int_0^T \int_{M} f \left\langle \nabla \phi_h  \, , \nabla \eta \right\rangle  d\mu dt = \int_0^T \int_{\Sigma} \hat f(y,\delta_0,t) \, \frac{\partial\hat\eta}{\partial \delta}(y,\delta_0,t) \, \mathcal{A}(y,\delta_0) \, d\Sigma dt \, .
\end{equation}
Let us turn to the last integral in the left-hand side of \eqref{p1-step1}, which is the most delicate term. By virtue of \eqref{cutoff1} and \eqref{lap-1}, we have:
\begin{equation}\label{p2-step1}
\begin{aligned}
& \int_0^T \int_{M} f \, \eta \, \Delta \phi_h \, d\mu dt \\
= & \int_{\delta_0+h}^{\delta_0+2h} \int_0^T \int_{\Sigma} \hat f(y,\delta,t) \, \hat{\eta}(y,\delta,t) \left[ \psi_h^{\prime\prime}(\delta) +  \mathsf{m}(y,\delta) \, \psi_h^\prime(\delta) \right] \mathcal{A}(y,\delta) \, d\Sigma dt d\delta \, .
\end{aligned}
\end{equation}
Since $ \hat{\eta} = 0 $ on $ \Sigma \times \{ \delta_0 \} \times [0,T] $ and $ \hat{\eta} \in C^1(\Sigma \times [\delta_0,\epsilon] \times [0,T]) $, the  following estimate holds:
\begin{equation}\label{p2-step2}
\left| \hat \eta (y,\delta,t) \right| \le 2h \left\| \nabla \eta \right\|_\infty \qquad \forall (y,\delta,t) \in \Sigma \times [\delta_0+h,\delta_0+2h] \times [0,T] \, ;
\end{equation}
furthermore, thanks to \eqref{cutoff1-bis} and the fact that $ \mathsf{m}(y,\delta) $ is a smooth function, there exists a positive constant independent of $ h $, which is still denoted by $ c $, such that
\begin{equation}\label{p2-step3}
\left| \psi_h^{\prime\prime}(\delta) \right| +  \frac{\left|  \mathsf{m}(y,\delta) \, \psi_h^\prime(\delta) \right|}{h} \le \frac{c}{h^2} \qquad \forall (y,\delta) \in \Sigma \times [\delta_0+h,\delta_0+2h] \, .
\end{equation}
Therefore, by combining \eqref{p2-step2}, \eqref{p2-step3} and reasoning as in \eqref{p1-step4}, it is apparent that
\begin{equation}\label{p2-step4}
\begin{aligned}
& \lim_{h \to 0^+} \int_{\delta_0+h}^{\delta_0+2h} \int_0^T \int_{\Sigma} \hat f(y,\delta,t) \, \hat{\eta}(y,\delta,t) \left[ \psi_h^{\prime\prime}(\delta) +  \mathsf{m}(y,\delta) \, \psi_h^\prime(\delta) \right] \mathcal{A}(y,\delta) \, d\Sigma dt d\delta \\
= & \lim_{h \to 0^+} \int_0^T \int_{\Sigma} \hat f(y,\delta_0,t) \left( \int_{\delta_0+h}^{\delta_0+2h} \, \hat{\eta}(y,\delta,t) \, \psi_h^{\prime\prime}(\delta) \, \mathcal{A}(y,\delta) \, d\delta \right) d\Sigma dt \, .
\end{aligned}
\end{equation}
An elementary integration by parts with respect to $ \delta $ yields (let us observe that by construction $ \psi_h^\prime(\delta_0+h)=\psi_h^\prime(\delta_0+2h)=0 $)
\begin{equation*}\label{p2-step5}
\begin{aligned}
& \int_{\delta_0+h}^{\delta_0+2h} \, \hat{\eta}(y,\delta,t) \, \psi_h^{\prime\prime}(\delta) \, \mathcal{A}(y,\delta) \, d\delta \\
= & - \int_{\delta_0+h}^{\delta_0+2h} \, \frac{\partial\hat{\eta}}{\partial \delta}(y,\delta,t) \, \psi_h^{\prime}(\delta) \, \mathcal{A}(y,\delta) \, d\delta - \int_{\delta_0+h}^{\delta_0+2h} \, \hat{\eta}(y,\delta,t) \, \psi_h^{\prime}(\delta) \, \frac{\partial\mathcal{A}}{\partial \delta}(y,\delta) \, d\delta \, ,
\end{aligned}
\end{equation*}
whence
\begin{equation}\label{p2-step6}
\lim_{h \to 0^+} \int_{\delta_0+h}^{\delta_0+2h} \, \hat{\eta}(y,\delta,t) \, \psi_h^{\prime\prime}(\delta) \, \mathcal{A}(y,\delta) \, d\delta = - \frac{\partial\hat\eta}{\partial \delta}(y,\delta_0,t) \, \mathcal{A}(y,\delta_0) \, ,
\end{equation}
still uniformly on $ \Sigma \times [0,T] $. As a result, thanks to \eqref{p2-step1} and \eqref{p2-step4}--\eqref{p2-step6} we can infer that
\begin{equation}\label{p2-step7}
\lim_{h \to 0^+} \int_0^T \int_{M} f \, \eta \, \Delta \phi_h \, d\mu dt = - \int_0^T \int_{\Sigma} \hat f(y,\delta_0,t) \, \frac{\partial\hat\eta}{\partial \delta}(y,\delta_0,t) \, \mathcal{A}(y,\delta_0) \, d\Sigma dt \, .
\end{equation}
Since $ \delta $ is the geodesic coordinate along the inward normal direction $ -\nu $ on $ \partial D $ (see again \cite[Theorem 4.8 (12)]{Fed}), the identity
\begin{equation}\label{p2-step8}
-\int_0^T \int_{\Sigma} \hat f(y,\delta_0,t) \, \frac{\partial\hat\eta}{\partial \delta}(y,\delta_0,t) \, \mathcal{A}(y,\delta_0) \, d\Sigma dt = \int_0^T \int_{\partial D} f \, \frac{\partial \eta}{\partial \nu} \, d\sigma dt
\end{equation}
holds, where $ d\sigma $ is precisely the $(n-1)$-dimensional Hausdorff measure on $ \partial D $. Hence by combining \eqref{p1-step1}, \eqref{p1-step2}, \eqref{p1-step5}, \eqref{p2-step7} and \eqref{p2-step8}, we end up with
\begin{equation}\label{p-last}
\int_0^T \int_{D} f \, \Delta \eta \, d\mu dt - \int_0^T \int_{\partial D} f \, \frac{\partial \eta}{\partial \nu} \, d\sigma dt \le F\!\left(\overline{\eta},\overline{\eta}_t\right) .
\end{equation}
We are left with providing an appropriate choice of the sets $ \{ D_k \} $ as in the statement. Let $ R_k \to \infty $ be a given strictly increasing sequence of regular values for the exhaustion function $ \mathcal{E} $. For each $k \in \mathbb{N} $, we define $ D_k $ to be the set $ D $ obtained in the above construction with $ R \equiv R_{k+1} $ and the further requirement that $ \epsilon \in (0,d_k) $, where $d_k$ stands for the (positive) minimum of the distance between points of the level sets $ \partial \Omega_{R_k} $ and $ \partial \Omega_{R_{k+1}} $. This ensures the validity of \eqref{double-inclusion}, whence $\{ D_k \} $ is indeed a regular exhaustion of $M$. Note that \eqref{p2} is just inequality \eqref{p-last} with $ D\equiv D_k $.
\end{proof}

For simplicity, in \eqref{p1} we required the test functions $ \xi $ to be compactly supported and $ C^2 $ instead of $ C^\infty $, so that the differential inequality is not exactly in distributional form. Nevertheless, a routine local approximation of $ C^2 $ functions with $ C^\infty $ functions allows one to establish the same result even starting from \eqref{p1} in distributional form.

\subsection{Construction of the candidate minimal solution}\label{constr-min}
Given a nonnegative initial datum $ u_0 \in L^1_{{\mathrm{loc}}}(M) $, an exhaustion function $ \mathcal{E} $ of $ M $ and a corresponding strictly increasing sequence of regular values $ R_k \to \infty$, we set up the following ``lifted'' and bounded Dirichlet problems:
\begin{equation}\label{epsilon-R}
\begin{cases}
u_t = \Delta u^m & \text{in } \Omega_{R_k} \times (0,+\infty) \, , \\
u = \ell & \text{on } \partial \Omega_{R_k} \times (0,+\infty) \, , \\
u =  \ell + u_0 \wedge \beta   & \text{on } \Omega_{R_k} \times \{ 0 \} \, ,
\end{cases}
\end{equation}
where $ \ell,\beta >0 $ and we recall that the domains $ \{ \Omega_{R_k} \} $ are defined as in \eqref{sub}.  Note that in this way the equation implicitly becomes both \emph{nondegenerate} and \emph{nonsingular} (i.e.~\emph{quasilinear}), hence it enjoys several regularity properties. In particular, standard comparison principles applied to the corresponding solutions, which are denoted by $ \{  u_{k,\ell,\beta} \}  $, ensure that
\begin{equation}\label{ordering}
\begin{gathered}
\ell \le u_{k,\ell,\beta} \le u_{k+1,\ell,\beta} \le \ell + \beta \, , \qquad u_{k,\ell,\beta} \le u_{k,\ell',\beta} \, , \qquad u_{k,\ell,\beta} \le u_{k,\ell,\beta'} \\
 \forall \ell'>\ell>0 \, , \quad \forall \beta'>\beta>0 \, , \quad \forall k \in \mathbb{N} \, .
\end{gathered}
\end{equation}
We now show that the nonnegative function $\underline{u}$ defined as
\begin{equation}\label{ordering2}
 \underline{u} := \lim_{k \uparrow \infty} u_k \, , \qquad  u_k := \lim_{\ell \downarrow 0} u_{k,\ell}  \, , \qquad u_{k,\ell} := \lim_{\beta \uparrow \infty} u_{k,\ell,\beta} \, ,
\end{equation}
is a solution of \eqref{cauchy}, where each $ u_k $ is tacitly extended to zero outside $ \Omega_{R_k} $. To this aim, first of all we observe that the \emph{very weak} version of \eqref{epsilon-R} entails
\begin{equation}\label{weakfor-1}
\begin{gathered}
\int_{0}^{+\infty} \int_{\Omega_{R_k}} u_{k,\ell,\beta} \, \xi_t \, d\mu dt + \int_{0}^{+\infty} \int_{\Omega_{R_k}} u_{k,\ell,\beta}^m \, \Delta \xi \, d\mu dt + \int_{\Omega_{R_k}}  \left[ \ell + u_0(x) \wedge \beta \right] \xi(x,0) \, d\mu = 0 \\
\forall \xi \in C^\infty_c\!\left(\Omega_{R_k} \times [0,+\infty) \right) .
\end{gathered}
\end{equation}
By virtue of Proposition \ref{prop1} applied to $ u \equiv u_{k,\ell,\beta}  $, $ v = 0 $ and $ s=0 $, we infer that
\begin{equation}\label{eq:hp-1}
\left[\int_{\Omega_R} u_{k,\ell,\beta}(x,t) \, d\mu \right]^{1-m} \leq \left[\int_{\Omega_{2R}} \left( \ell + u_0 \right)  d\mu \right]^{1-m} + \mathcal{H}_{R} \, t  \qquad \forall  t> 0 \, ,
\end{equation}
provided $ 0 < 2R < R_k $. Although Proposition \ref{prop1}  was stated for global solutions only, i.e.~existing in the whole of $M$, from the corresponding proof one can check that it is enough to ask that they are defined at least in $ \Omega_{2 R} $, whence the request $ 0 < 2R < R_k $. In fact we have also exploited \eqref{eq3} for \emph{every} $ t>0 $ and down to $ t=0 $: this is a simple consequence of the continuity of  $ t \mapsto u_{k,\ell,\beta}(\cdot,t) $ as a curve with values in $  L^1(\Omega_{R_k}) $, due to standard quasilinear theory (see e.g.~\cite[Chapter 3]{V}). We are therefore in position to pass to the limit as $ \beta \uparrow \infty $; inequalities \eqref{ordering} guarantee that $ \beta \mapsto u_{k,\ell,\beta} $ is increasing, hence it admits a pointwise limit $ u_{k,\ell} $ which, by \eqref{eq:hp-1} and monotone convergence, complies with
\begin{equation}\label{eq:hp-2}
\left[\int_{\Omega_R} u_{k,\ell}(x,t) \, d\mu \right]^{1-m} \leq \left[\int_{\Omega_{2R}} \left( \ell + u_0 \right)  d\mu \right]^{1-m} + \mathcal{H}_{R} \, t  \qquad \forall  t> 0 \, .
\end{equation}
We can then let $ \ell \downarrow 0 $ upon noticing that, still by \eqref{ordering}, the map $ \ell \mapsto u_{k,\ell} $ is decreasing. As a result it admits a pointwise limit $ u_{k} $ which, by \eqref{eq:hp-2}, satisfies
\begin{equation}\label{eq:hp-3}
\left[\int_{\Omega_R} u_{k}(x,t) \, d\mu \right]^{1-m} \leq \left[\int_{\Omega_{2R}}  u_0 \,  d\mu \right]^{1-m} + \mathcal{H}_{R} \, t  \qquad \forall  t> 0 \, .
\end{equation}
In fact each $ u_k $ is a solution of the \emph{homogeneous} Dirichlet problem
\begin{equation}\label{hom-R}
\begin{cases}
u_t = \Delta u^m & \text{in } \Omega_{R_k} \times (0,+\infty) \, , \\
u = 0 & \text{on } \partial \Omega_{R_k} \times (0,+\infty) \, , \\
u =  u_0  & \text{on } \Omega_{R_k} \times \{ 0 \} \, .
\end{cases}
\end{equation}
Finally, we deal with the passage to the limit as $ k \to \infty $, which is the most delicate. We can claim again, by virtue of \eqref{ordering}, that the sequence $ \{ u_k \} $ is monotone increasing, so that  by letting $ k \to \infty $ in \eqref{eq:hp-3}  we end up with
\begin{equation}\label{eq:hp-4}
\left[\int_{\Omega_R} \underline{u}(x,t) \, d\mu \right]^{1-m} \leq \left[\int_{\Omega_{2R}}  u_0 \,  d\mu \right]^{1-m} + \mathcal{H}_{R} \, t  \qquad \forall  t> 0 \, ,
\end{equation}
where $ \underline{u} $ is the pointwise limit given in \eqref{ordering2}. From \eqref{weakfor-1} and the previous passages to the limit, it is plain that
\begin{equation*}\label{weakfor-2}
\begin{gathered}
\int_{0}^{+\infty} \int_{M} u_{k} \, \xi_t \, d\mu dt + \int_{0}^{+\infty} \int_{M} u_{k}^m \, \Delta \xi \, d\mu dt + \int_{M} u_0(x) \, \xi(x,0) \, d\mu = 0 \\
\forall \xi \in C^\infty_c\!\left(\Omega_{R_k} \times [0,+\infty) \right) .
\end{gathered}
\end{equation*}
On the other hand, estimate \eqref{eq:hp-4} yields $ \underline{u}  \in L^1_{\mathrm{loc}}(M\times [0, +\infty)) $, hence $ \{ u_k \} $ converges to $ \underline{u}  $ at least in $L^1_{\mathrm{loc}}(M\times [0, +\infty))$ (recall that $R>0$ is any number smaller than $ R_k/2 $ and $ R_k \to \infty $), therefore we are allowed to let $ k \to \infty $ to obtain
\begin{equation}\label{weakfor-3}
\begin{gathered}
\int_{0}^{+\infty} \int_{M} \underline{u}  \, \xi_t \, d\mu dt + \int_{0}^{+\infty} \int_{M} \underline{u} ^m \, \Delta \xi \, d\mu dt + \int_{M} u_0(x) \, \xi(x,0) \, d\mu = 0 \\
\forall \xi \in C^\infty_c\!\left(M\times [0,+\infty) \right) .
\end{gathered}
\end{equation}
A routine time cut-off argument then ensures that \eqref{weakfor-3} is equivalent to \eqref{distrib-id}--\eqref{eq12c}, so that $ \underline{u}  $ is indeed a solution of \eqref{cauchy} in the sense of Definition \ref{defsol}.

\subsection{Local comparison and minimality}\label{local-comp}

By carefully adapting a strategy introduced in \cite{PZ} for the fast diffusion equation with absorption in the Euclidean space $\mathbb R^n$, first of all we show a local comparison result between the approximate solutions $ \{ u_k \} $ of \eqref{hom-R} and a general nonnegative solution of \eqref{cauchy}. This is the fundamental tool we need in order to prove Theorem \ref{distr-sol}.
\begin{lemma}\label{loc-comp}
	Let $u_0\in L^2_{{\mathrm{loc}}}(M)$, with $u_0\ge0$. Let $u\in L^2_{\mathrm{loc}}(M\times[0,+\infty))$ be a nonnegative distributional solution of problem \eqref{cauchy}, in the sense of Definition \ref{defsol}. Given $ k \in \mathbb{N} $, let $ u_k $ be the solution of \eqref{hom-R} constructed in Subsection \ref{constr-min}. Then $ u \ge u_k $ almost everywhere in $ \Omega_{R_k} \times (0,+\infty) $.
\end{lemma}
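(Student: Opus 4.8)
The plan is to compare the approximate solution $u_k$ of the homogeneous Dirichlet problem \eqref{hom-R} on $\Omega_{R_k}$ with a given nonnegative distributional solution $u$ of \eqref{cauchy}, and to show $u \ge u_k$ a.e.\ in $\Omega_{R_k}\times(0,+\infty)$. The natural route, following the spirit of \cite{PZ}, is a duality/test-function argument: since the FDE for $m\in(0,1)$ is concave in $u$, the difference $w := u - u_k$ satisfies, at least formally, a linear parabolic inequality of the form $w_t \ge \Delta(a\, w)$ with a nonnegative (but possibly singular/degenerate and nonsmooth) coefficient $a$ coming from the mean-value form of $u^m - u_k^m$. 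One then wants to test this against the solution of a suitable backward (dual) linear problem with the correct sign, localized on $\Omega_{R_k}$ and vanishing on $\partial\Omega_{R_k}$. Because $u_k = 0$ on the lateral boundary while $u \ge 0 = u_k$ there, the boundary contributions have the favorable sign, and one is left to conclude $\int w\,\varphi\,d\mu \ge 0$ for a large enough class of nonnegative $\varphi$.

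Concretely, here is the order I would carry things out. First, recall that $u_k$ is a genuine (quasilinear, nondegenerate) solution obtained as a monotone limit, and by construction $u_k$ is bounded and regular up to the boundary on $\Omega_{R_k}\times(0,T)$; in particular $u_k^m$ is Lipschitz in $u_k$ away from $u_k=0$ and, since $u_k \ge \ell > 0$ before the limit $\ell\downarrow 0$, one should actually work first with the approximants $u_{k,\ell,\beta}$ (which are bounded away from zero and from above) and pass to the limit at the very end using the monotone convergence in \eqref{ordering2} together with the Herrero–Pierre bound \eqref{eq:hp-3}. Second, against a nonnegative test function $\xi \in C^\infty_c(\Omega_{R_k}\times(0,+\infty))$ one writes the weak formulation for $u$ and the weak formulation \eqref{weakfor-1} for $u_{k,\ell,\beta}$, subtracts, and using the elementary inequality $b^m - a^m \le m\,a^{m-1}(b-a)$ valid for $0<a\le b$ (or the two-sided mean value form) obtains
$$
\int_0^{+\infty}\!\!\int_{\Omega_{R_k}} (u - u_{k,\ell,\beta})\,\xi_t\,d\mu dt + \int_0^{+\infty}\!\!\int_{\Omega_{R_k}} a(x,t)\,(u - u_{k,\ell,\beta})\,\Delta\xi\,d\mu dt \ge (\text{harmless boundary/initial terms}),
$$
with $0 \le a(x,t) = \big(u^m - u_{k,\ell,\beta}^m\big)/(u - u_{k,\ell,\beta})$ bounded on the set where $u_{k,\ell,\beta}$ stays away from $0$ — this is where the lower cutoff $\ell$ is essential — and where the initial term is nonnegative because $u_0 \ge u_0\wedge\beta$. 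Third, solve (or rather, approximate the solution of) the dual problem $\xi_t + a_\delta\,\Delta\xi = -\varphi$ on $\Omega_{R_k}\times(0,T)$ with $\xi = 0$ on the parabolic boundary (a smoothed, strictly positive $a_\delta$ approximating $a$ from above), where $\varphi \ge 0$ is an arbitrary smooth compactly supported function; linear parabolic theory gives $\xi \ge 0$ and the right regularity. Feeding $\xi$ back and letting $\delta\to 0$, then $\beta\uparrow\infty$, then $\ell\downarrow 0$, then removing the time cutoff, yields $\int\!\!\int (u - u_k)\,\varphi \ge 0$ for all such $\varphi$, hence $u \ge u_k$ a.e.

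The main obstacle is the low regularity of $u$: it is merely in $L^2_{\mathrm{loc}}$, not bounded, so the coefficient $a(x,t)$ is not bounded above and the dual linear problem is not uniformly parabolic — one cannot naively solve $\xi_t + a\,\Delta\xi = -\varphi$. This is exactly why the hypothesis $u, u_0 \in L^2_{\mathrm{loc}}$ appears: $u^m = u\cdot u^{m-1}$ and for $m\in(0,1)$ one has $u^m \le \tfrac12(u^2 + 1)\cdot(\text{const})$ only crudely, but more usefully $u^{m}$ is controlled by $u$ itself plus lower order, so that $a(x,t)\,(u-u_{k,\ell,\beta})$ — i.e.\ the quantity $u^m - u_{k,\ell,\beta}^m$ actually appearing under $\Delta\xi$ — is locally integrable (indeed in $L^2_{\mathrm{loc}}$, by $u^{2m}\le u^2 + 1$ since $2m<2$). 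So rather than building a dual solution with coefficient $a$, I would keep $u^m - u_{k,\ell,\beta}^m$ as the unknown-independent datum and instead exploit the integration-by-parts machinery of Proposition \ref{lem:comp} on the exhausting domains $D_k$: test the distributional inequality satisfied by $u - u_{k,\ell,\beta}$ with a carefully chosen family converging to the solution of a heat-type dual problem, controlling the boundary term $\int f\,\partial_\nu\eta\,d\sigma$ via \eqref{p2} and its favorable sign. Balancing these limits — $\delta\to0$, $\beta\uparrow\infty$, $\ell\downarrow0$, and the exhaustion $D_k\uparrow\Omega_{R_k}$ — while keeping every term locally integrable is the delicate heart of the proof; everything else is standard parabolic bookkeeping and the monotone convergences already recorded in Subsection \ref{constr-min}.
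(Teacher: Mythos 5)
Your overall architecture coincides with the paper's: lift by $\ell$, truncate by $\beta$, subtract the weak formulations, introduce the difference quotient $a=(u^m-u_{k,\ell,\beta}^m)/(u-u_{k,\ell,\beta})$, invoke Proposition \ref{lem:comp} on the regular domains $D_k$ to handle boundary terms, solve a backward dual problem with a regularized coefficient, and pass to the limits $\beta\uparrow\infty$, $\ell\downarrow 0$. However, your final paragraph contains a genuine error that would derail the argument if followed. You claim that since $u$ is unbounded, ``the coefficient $a(x,t)$ is not bounded above,'' and you therefore propose to abandon the dual problem with coefficient $a$ in favor of testing $u^m-u_{k,\ell,\beta}^m$ against a ``heat-type'' dual problem. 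This is backwards for the fast diffusion range: since $m\in(0,1)$ and $u_{k,\ell,\beta}\ge\ell$, one has
\begin{equation*}
a = \frac{1}{u_{k,\ell,\beta}^{1-m}}\,\frac{(u/u_{k,\ell,\beta})^m-1}{(u/u_{k,\ell,\beta})-1}\,\chi_{\{u\neq u_{k,\ell,\beta}\}} \le \ell^{-(1-m)},
\end{equation*}
because $\sup_{z\ge0,\,z\neq1}(z^m-1)/(z-1)=1$; largeness of $u$ only makes $a$ \emph{smaller}. So $a$ is bounded (this is exactly what the lifting by $\ell$ buys), and the dual problem $\xi_t+a_h\Delta\xi=0$ is perfectly usable. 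Moreover, your proposed replacement cannot close: the whole point of choosing the coefficient $a$ is the algebraic identity $(u-u_{k,\ell,\beta})\xi_t+(u^m-u_{k,\ell,\beta}^m)\Delta\xi=(u-u_{k,\ell,\beta})(\xi_t+a\,\Delta\xi)$; with a heat-type dual problem (coefficient $1$) you are left with an uncontrolled remainder, since $\Delta\xi$ has no sign.

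The true obstruction, and the actual reason for the hypothesis $u\in L^2_{\mathrm{loc}}$, is that $a$ is merely measurable and may vanish, so one must replace it by smooth, strictly positive approximations $a_h$ with $|a-a_h|^2/a_h\to0$ in $\left(L^\infty\right)^*$ and then control the error term $\int_0^T\!\!\int_{D_k}(u-u_{k,\ell,\beta})(a-a_h)\Delta\xi_h$. The energy identity for the dual problem gives $\int_0^T\!\!\int_{D_k}a_h|\Delta\xi_h|^2\le\tfrac12\|\nabla\omega\|_2^2$ uniformly in $h$, and Cauchy--Schwarz then requires precisely $\int_0^T\!\!\int_{D_k}|u-u_{k,\ell,\beta}|^2\,|a-a_h|^2/a_h\to0$, i.e.\ $u\in L^2_{\mathrm{loc}}$. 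A second point you gloss over: after extending $u_{k,\ell,\beta}$ by $\ell$, the boundary term on $\partial D_k$ produced by Proposition \ref{lem:comp} is $-\int_0^T\!\!\int_{\partial D_k}(u^m-\ell^m)\,\partial_\nu\xi\,d\sigma dt$; the $u^m$ part has a sign, but the $\ell^m$ part does not and must be estimated quantitatively, uniformly in $h$, via a Green-function barrier bounding $|\partial_\nu\xi_h|$ on $\partial D_k$ — it is then killed only in the limit $\ell\downarrow0$. Without these two ingredients your sketch does not yield the conclusion.
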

\begin{proof}
To begin with, consider the solution $u_{k, \ell,\beta}$ of the approximate problem \eqref{epsilon-R} and, with some abuse of notation, let the same symbol denote its extension to the whole $ M \times (0,+\infty) $ obtained by setting $u_{k, \ell,\beta} = \ell $ in $ \big( M \setminus \Omega_{R_k} \big) \times (0,+\infty) $. Because $ u_{k, \ell,\beta} \ge \ell $ in $  \Omega_{R_k} \times (0,+\infty) $ and $ u_{k, \ell,\beta} = \ell $ on $ \partial \Omega_{R_k} \times (0,+\infty) $, it is not difficult to check that such an extension in fact becomes a subsolution to the same problem in the whole manifold. As a consequence, we infer the validity of the following inequality:
\begin{equation}\label{n3}\begin{aligned}
&\int_0^T \int_M \left[ \left(u - u_{k, \ell,\beta}\right) \xi_t + \left({u}^m - u_{k, \ell,\beta}^m \right) \Delta \xi \right] d\mu dt\\
\le & \int_M \left[ u(x,T) - u_{k, \ell,\beta}(x,T) \right] \xi(x,T) \, d\mu \,+ \ell \int_M \xi(x,0) \, d\mu \, ,
\end{aligned}\end{equation}
for almost every $ T>0 $ and every nonnegative $\xi\in C^\infty_c(M\times [0, T])$. We point out that \eqref{n3} can easily be deduced by a time cut-off argument from the distributional versions of $ \partial_t u = \Delta u^m $ and $ \partial_t u_{k, \ell,\beta} \le \Delta u_{k, \ell,\beta}^m $; moreover, the negligible set of times $T$ for which \eqref{n3} may not hold depends only on $ u $, since as observed in Subsection \ref{constr-min} each solution $u_{k, \ell,\beta}$ is $ L^1 $ time continuous. We are therefore in position to apply Proposition \ref{lem:comp} to \eqref{n3}. As a result, there exists a regular exhaustion $ \{ D_k \} \subset M $ satisfying \eqref{double-inclusion} such that
\begin{equation}\label{n3bis}
\begin{aligned}
&\int_0^T \int_{D_k} \left[ \left(u - u_{k, \ell,\beta}\right) \xi_t + \left({u}^m - u_{k, \ell,\beta}^m \right) \Delta \xi \right] d\mu dt - \int_0^T \int_{\partial D_k} \left( u^m - \ell^m \right) \frac{\partial \xi}{\partial \nu} \, d\sigma dt \\
\le & \int_{D_k} \left[ u(x,T) - u_{k,\ell,\beta}(x,T) \right] \xi(x,T) \, d\mu \, + \ell \int_{D_k} \xi(x,0) \, d\mu
\end{aligned}
\end{equation}
for every nonnegative $\xi\in C^2\!\left(\overline{D}_k\times [0, T]\right)$ that vanishes on $ \partial D_k \times [0,T] $ (here for notational simplicity we do not relabel $ \xi $). It is plain that the normal derivative of any such $ \xi $ is nonpositive on $ \partial D_k \times [0,T] $, whence \eqref{n3bis} entails
\begin{equation}\label{n3ter}
\begin{aligned}
&\int_0^T \int_{D_k} \left[ \left(u - u_{k, \ell,\beta}\right) \xi_t + \left({u}^m - u_{k, \ell,\beta}^m \right) \Delta \xi \right] d\mu dt + \ell^m \int_0^T \int_{\partial D_k} \frac{\partial \xi}{\partial \nu} \, d\sigma dt \\
\le & \int_{D_k} \left[ u(x,T) - u_{k,\ell,\beta}(x,T) \right] \xi(x,T) \, d\mu \, + \ell \int_{D_k} \xi(x,0) \, d\mu \, .
\end{aligned}
\end{equation}
Let us introduce the following function:
\begin{equation*}\label{n4}
a(x,t):=
\begin{cases}
\frac{u^m(x,t) - u_{k, \ell,\beta}^m(x,t)}{u(x,t) - u_{k, \ell,\beta}(x,t)} & \text{if } u (x,t) \neq u_{k, \ell,\beta}(x,t) \, , \\
0 & \text{if }  u(x,t) = u_{k, \ell,\beta}(x,t) \, .
\end{cases}
\end{equation*}
Clearly $ a \ge 0 $. Besides, since $u\ge0$ and $u_{k, \ell,\beta}\ge \ell $, we have that
\begin{equation*}\label{a-bound}
a(x,t) = \frac{1}{u_{k, \ell,\beta}^{1-m}} \, \frac{\left(\frac u{u_{k, \ell,\beta}}\right)^m-1}{\left(\frac u{u_{k, \ell,\beta}}\right)-1} \, \chi_{u(x,t) \neq u_{k,\ell,\beta}(x,t)} \le  \frac1{\ell^{1-m}} \qquad \text{for a.e.~} (x,t) \in M \times (0,T) \, ,
\end{equation*}
upon noticing that
$$
\sup_{z \ge 0 , \,  z \neq 1} \frac{z^m-1}{z-1}=1 \, .
$$
 As a consequence, inequality \eqref{n3ter} can be rewritten in the following way:
\begin{equation}\label{n3quater}
\begin{aligned}
& \int_0^T \int_{D_k} \left(  u - u_{k, \ell,\beta} \right) \left(\xi_t + a \, \Delta \xi \right) d\mu dt + \ell^m \int_0^T \int_{\partial D_k} \frac{\partial \xi}{\partial \nu} \, d\sigma dt \\
\le &  \int_{D_k} \left[ u(x,T) - u_{k, \ell,\beta}(x,T) \right] \xi(x,T) \, d\mu + \ell\int_{D_k} \xi(x,0) \, d\mu \, .
\end{aligned}
\end{equation}
Thanks to the fact that $a$ is nonnegative and bounded, one can pick a sequence of strictly positive, bounded and smooth functions $ \{ a_h \} \subset C^\infty\!\left( M \times [0,T] \right) $ such that
\begin{equation}\label{Linf-star}
 \lim_{h \to \infty} \frac{\left| a - a_h \right|^2}{a_h} = 0 \qquad \text{in } \left( L^\infty\!\left( D_k \times (0,T) \right) \right)^* .
\end{equation}
For an explicit construction of an analogous sequence, see e.g.~the proof of \cite[Theorem 2.3]{GMP}. Given a nonnegative function $\omega\in C^\infty_c(D_k)$ and $ h \in \mathbb{N} $, consider now the solution $\xi_{h}$ of the backward parabolic (dual) problem
\begin{equation}\label{n9}
\begin{cases}
\xi_t + a_{h} \, \Delta \xi = 0 & \text{in } D_k \times (0, T) \, , \\
\xi = 0 & \text{on } \partial D_k \times (0, T) \, , \\
\xi = \omega & \text{on } D_k \times \{T\} \, .
\end{cases}
\end{equation}
In view of standard parabolic regularity, we can claim that $ \xi_{h} $ is smooth in $ \overline{D}_k \times [0,T] $.  Moreover, by the comparison principle it follows that
\begin{equation}\label{n43}
 0 \le \xi_{h} \le \left\| \omega \right\|_\infty \qquad \text{in } \overline{D}_k \times [0,T] \, .
\end{equation}
In order to bound the second term in the left-hand side of \eqref{n3quater} with $\xi \equiv \xi_{h}$, we need to estimate
$$
\left|\frac{\partial \xi_{h}}{\partial \nu}(x,t)\right| \quad \text{for every } x \in \partial D_k \text{ and } t \in (0,T) \, .
$$
To this aim, let $ G_k(x,y) $ denote the Green function of the Dirichlet Laplace-Beltrami operator in $ D_k $ and pick a point $ y_0 \in \Omega_{R_k} $.  It is apparent that $ x \mapsto G_k(x,y_0) $ is smooth and positive in $ {D}_{k} \setminus \Omega_{R_k} $; in particular, there exists $ \lambda>0 $ such that
\begin{equation}\label{eq:green-2}
 \lambda \, G_k(x,y_0) \ge \| \omega \|_\infty \quad \forall x \in \partial \Omega_{R_k} \qquad \text{and} \qquad \lambda \, G_k(x,y_0) \ge \omega(x) \quad \forall x \in D_k \setminus \Omega_{R_k} \, .
\end{equation}
Hence, by virtue of \eqref{n43} and \eqref{eq:green-2}, we infer that $ x \mapsto \lambda \, G_k(x,y_0) $ is a supersolution to the problem
\begin{equation*}\label{n9-bis}
\begin{cases}
v_t + a_{h} \, \Delta v = 0 & \text{in } \big( D_k  \setminus \overline{\Omega}_{R_k} \big) \times (0, T) \, , \\
v = 0 & \text{on } \partial D_k \times (0, T) \, , \\
v  = \xi_{h} & \text{on } \partial \Omega_{R_k} \times (0,T)  \, , \\
v = \omega & \text{on } \big( D_k  \setminus \overline{\Omega}_{R_k} \big) \times \{T\} \, ,
\end{cases}
\end{equation*}
whereas $ \xi_h $ is the solution of the same problem. Hence, we deduce that $ \xi_h \le \lambda \, G_k(\cdot,y_0) $ in $ \big( D_k  \setminus \overline{\Omega}_{R_k} \big) \times (0, T) $. Since both $ x \mapsto G_k(x,y_0) $ and $ \xi_h $ vanish on $ \partial D_k $, this implies
 \begin{equation}\label{der-norm-1}
\left| \frac{\partial \xi_{h}}{\partial \nu}(x,t) \right| \leq \lambda \left|\frac{\partial G_k}{\partial \nu} (x,y_0) \right| \le \lambda \max_{x \in \partial D_k} \left|\frac{\partial G_k}{\partial \nu} (x,y_0) \right| =: \widetilde{\lambda} \qquad \forall (x,t) \in \partial D_k \times (0,T) \, .
\end{equation}
Having estimated the normal derivative of $ \xi_h $, we are able to complete the proof. First of all, multiplying the differential equation in \eqref{n9} by $\Delta\xi_{h}$ and integrating by parts in $D_k\times (0, T)$, we obtain:
\begin{equation*}\label{energy}
\frac 1 2\int_{D_k} \left| \nabla\xi_{h}(x,0) \right|^2 d\mu + \int_0^T \int_{D_k} a_{h} \left| \Delta \xi_{h} \right|^2 d\mu dt = \frac 1 2 \int_{D_k} \left| \nabla \omega \right|^2 d\mu \, ,
\end{equation*}
so that
\begin{equation}\label{n48}
\begin{aligned}
&\left| \int_0^T\int_{D_k}\left(  u - u_{k,\ell,\beta} \right)\left(a-a_{h}\right) \Delta\xi_{h} \, d\mu d t\right| \\
\le  & \left[ \int_0^T\int_{D_k}\left|u-u_{k,\ell,\beta}\right|^2 \frac{\left|a-a_{h}\right|^2}{a_{h}}\,{d}\mu{d}t \right]^{\frac12}\left[\int_0^T\int_{D_k} a_{h}\left|\Delta\xi_{h}\right|^2 {d}\mu{ d}t \right]^{\frac12}  \\
\le & \, \frac{\left\| \nabla \omega  \right\|_2}{\sqrt 2} \left[ \int_0^T\int_{D_k}\left|u-u_{k,\ell,\beta}\right|^2 \frac{\left|a-a_{h}\right|^2}{a_{h}}\,{d}\mu{d}t \right]^{\frac12} .
\end{aligned}
\end{equation}
Since $ u_{k,\ell,\beta} \in L^\infty\!\left( D_k \times (0,T) \right) $ and $ u \in L^2\!\left( D_k \times (0,T) \right) $ by assumption, thanks to \eqref{Linf-star} we can infer that
\begin{equation}\label{bound}
\lim_{h \to \infty}  \int_0^T\int_{D_k}\left|u-u_{k,\ell,\beta}\right|^2 \frac{\left|a-a_{h}\right|^2}{a_{h}}\,{d}\mu{d}t = 0 \, .
\end{equation}
If we go back to \eqref{n3quater} with $\xi \equiv \xi_{h}$ and let $ h \to \infty $, recalling \eqref{n9}, \eqref{n43}, \eqref{der-norm-1}, \eqref{n48} and \eqref{bound}, we end up with
\begin{equation}\label{TTT}
 - \widetilde \lambda \, T \, \sigma(\partial D_k) \, \ell^m \le \int_{D_k} \left[ u(x,T) - u_{k,\ell,\beta}(x,T) \right] \omega(x) \, d\mu + \left\| \omega \right\|_\infty \mu(D_k) \, \ell \, .
\end{equation}
Upon letting first $ \beta \uparrow \infty $ and then $ \ell \downarrow 0 $ as in Subsection \ref{constr-min}, from \eqref{TTT} it follows that
\[
\int_{D_k} \left[ u(x,T) - u_{k}(x,T) \right] \omega(x) \, d\mu\ge0 \, .
\]
The thesis is therefore established in view of the arbitrariness of $ T $ and the test function $ \omega $, along with the inclusion $\Omega_{R_k}\subset D_k$.
\end{proof}

\begin{proof}[Proof of Theorem \ref{distr-sol}]
Let $ \underline{u} \ge 0 $ be defined by \eqref{ordering2}, namely the monotone limit of the solutions $\{ u_k \}$ of problems \eqref{hom-R} constructed in Subsection \ref{constr-min}. We have already established that $ \underline{u} $ is indeed a solution of \eqref{cauchy} in the sense of Definition \ref{defsol}. The fact that $\underline{u}$ belongs to the space $ L^2_{\mathrm{loc}}(M\times[0,+\infty)) $ can be deduced by means of an adaptation of the Herrero-Pierre estimates \eqref{eq:hp-4} to local $L^p$ norms ($ p>1 $), that follows similarly to the proof of \cite[Theorem 2.3]{BV2}. By passing to the limit as $ k \to \infty $ in the inequality $ u \ge u_k $ (let $ u_k $ be set to zero outside $ \Omega_{R_k} $), guaranteed by Lemma \ref{loc-comp}, we finally infer that $ u \ge \underline{u} $, whence the minimality property of $ \underline{u} $.
\end{proof}

\section{Proof of the nonexistence results for the elliptic equation} \label{Ellptic}
Prior to the proof of Theorem \ref{tell}, we establish two fundamental preliminary results. The first one is an adaptation, to the type of solutions we deal with, of the mean-value inequality for subharmonic functions due to Li and Schoen \cite[Theorem 2.1]{LS}.

Hereafter $ x \mapsto r(x) $ stands for the distance function from any \emph{fixed} reference point $ o \in M $.

\begin{proposition}\label{vwmv}
Let $w\in L^1_{\mathrm{loc}}(M)$ be a nonnegative function satisfying
\begin{equation}\label{eeesub}
\Delta w \geq 0 \qquad \text{in } \mathcal D'(M) \, .
\end{equation}
{Let $ \Omega $ be a regular precompact domain of $ M $. Let $ R>0 $ fulfill $ B_{5R}(o) \Subset \Omega $ and set $ K:= \inf_{x \in \Omega} \mathrm{Ric}(x) $. Then there exists a constant $ c(n,R,K) >0 $ such that}
\begin{equation}\label{eq16-tris}
\underset{x \in B_{\!\frac{R}{2}}\!(o)}{\operatorname{ess} \sup} \, w(x) \leq \frac{c(n,R,K)}{ \mu\!\left(B_{R}(o)\right) } \, \int_{B_R(o)} w \,{d}\mu  \, .
\end{equation}
\end{proposition}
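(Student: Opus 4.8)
The plan is to reduce the statement, which concerns merely $L^1_{\mathrm{loc}}$ distributional subharmonic functions, to the classical Li–Schoen mean-value inequality for genuinely subharmonic (e.g.\ $C^2$, or at least continuous) functions by a mollification argument. First I would invoke the (local) regularity theory of subharmonic distributions: a nonnegative $w \in L^1_{\mathrm{loc}}(M)$ with $\Delta w \ge 0$ in $\mathcal D'(M)$ admits an upper semicontinuous representative $\tilde w$ (defined as an appropriate limit of spherical or ball averages) which is subharmonic in the classical potential-theoretic sense, coincides with $w$ almost everywhere, and satisfies the sub-mean-value property on every sufficiently small geodesic ball contained in $\Omega$. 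Equivalently, one mollifies: on the precompact set $\overline{B_{5R}(o)}$ one can regularize $w$ (using a partition of unity and local charts, or a heat-semigroup mollification $w_\varepsilon := e^{\varepsilon\Delta}w$ in a slightly larger precompact domain) to get smooth functions $w_\varepsilon \ge 0$ with $\Delta w_\varepsilon \ge 0$ on $B_{5R}(o)$ and $w_\varepsilon \to w$ in $L^1(B_{5R}(o))$.

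Next I would apply the Li–Schoen estimate \cite[Theorem 2.1]{LS} to each smooth subharmonic $w_\varepsilon$ on $B_{5R}(o)$: since $\mathrm{Ric} \ge K$ on $\Omega \supset B_{5R}(o)$, there is a constant $c = c(n,R,K)>0$, independent of $\varepsilon$, with
\[
\sup_{B_{\!\frac{R}{2}}\!(o)} w_\varepsilon \;\le\; \frac{c(n,R,K)}{\mu\!\left(B_R(o)\right)} \int_{B_R(o)} w_\varepsilon \, d\mu \, .
\]
The right-hand side converges to $\frac{c(n,R,K)}{\mu(B_R(o))}\int_{B_R(o)} w\,d\mu$ by the $L^1$ convergence. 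For the left-hand side, the sub-mean-value property passes to the limit: since each $w_\varepsilon$ is subharmonic, for every $x \in B_{R/2}(o)$ and every small $\rho>0$ one has $w_\varepsilon(x) \le \fint_{B_\rho(x)} w_\varepsilon\,d\mu$, and letting $\varepsilon \to 0$ gives $w(x) \le \fint_{B_\rho(x)} w\,d\mu$ for a.e.\ $x$; taking the (decreasing) limit $\rho \to 0^+$ of these averages recovers the precise upper-semicontinuous representative of $w$, which is therefore bounded above on $B_{R/2}(o)$ by $\limsup_{\varepsilon}\sup_{B_{R/2}} w_\varepsilon$, hence by the limiting right-hand side. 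This yields \eqref{eq16-tris} with the essential supremum (the a.e.\ defined representative).

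The condition $B_{5R}(o) \Subset \Omega$ is used precisely to have enough room: Li–Schoen requires the subharmonic function to be defined on a ball a definite multiple larger than the one on which the oscillation is controlled, and the regularization needs a collar around $B_{5R}(o)$ inside the precompact $\Omega$, on which $\mathrm{Ric} \ge K$. The main obstacle, and the point requiring care, is making the mollification genuinely produce \emph{subharmonic} smooth functions on a fixed ball $B_{5R}(o)$ with a constant in Li–Schoen that does not deteriorate as $\varepsilon \to 0$: a naive chart-by-chart mollification need not preserve $\Delta w_\varepsilon \ge 0$ on the nose. The clean way around this is the heat-semigroup regularization on a precompact domain slightly larger than $B_{5R}(o)$ — there $\Delta(e^{s\Delta}w) = e^{s\Delta}(\Delta w) \ge 0$ in the distributional sense and the result is smooth for $s>0$ — or, alternatively, to quote directly that every distributional subharmonic function has a subharmonic representative satisfying the sub-mean-value inequality and then apply Li–Schoen's proof, which only uses that inequality together with the Bochner/Laplacian comparison under $\mathrm{Ric}\ge K$. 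Either route keeps the constant $c(n,R,K)$ intact.
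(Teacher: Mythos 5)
Your overall strategy --- produce regular subharmonic approximants of $w$ on a neighborhood of $B_{5R}(o)$, apply Li--Schoen to them, and pass to the limit --- is exactly the strategy of the paper, and you correctly identify that the entire difficulty sits in the regularization step. However, both concrete devices you propose for that step have genuine problems. First, the Dirichlet heat semigroup on a precompact domain $\Omega' \supset B_{5R}(o)$ does \emph{not} preserve subharmonicity. Writing $\Delta_x \big(e^{s\Delta_D}w\big)(x)=\int_{\Omega'}\Delta_y p_s(x,y)\,w(y)\,d\mu(y)$ and integrating by parts, one obtains, besides the nonnegative term $\int_{\Omega'} p_s(x,y)\,d(\Delta w)(y)$, the boundary term $\int_{\partial\Omega'} w\,\partial_{\nu_y}p_s(x,y)\,d\sigma(y)$, which is $\le 0$ because the Dirichlet kernel satisfies $\partial_{\nu}p_s\le 0$ on $\partial\Omega'$ and $w\ge 0$. (A sanity check: in one dimension $e^{s\Delta_D}1$ on an interval is not convex.) The identity $\Delta e^{s\Delta}w=e^{s\Delta}\Delta w$ with no boundary contribution holds for the semigroup on all of $M$, which cannot be applied to merely $L^1_{\mathrm{loc}}$ data. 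One can try to salvage this by showing the bad term is exponentially small on $B_{5R}(o)$ as $s\to 0^+$ and adding a vanishing correction, but that already requires a trace of $w$ on $\partial\Omega'$ and precisely the integration-by-parts formula of Proposition \ref{lem:comp} that you were hoping to bypass. Second, your limit passage on the left-hand side rests on the inequality $w_\varepsilon(x)\le \frac{1}{\mu(B_\rho(x))}\int_{B_\rho(x)}w_\varepsilon\,d\mu$ for smooth subharmonic functions; this sub-mean-value inequality over geodesic balls with constant $1$ is \emph{false} on a general Riemannian manifold (the introduction of the paper points this out explicitly --- the mean-value property only survives in the quantitative Li--Schoen form with a curvature-dependent constant). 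This particular step is reparable: $L^1$ convergence yields a.e.\ convergence along a subsequence, which suffices to bound the essential supremum by the limit of the right-hand sides.

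Your remaining route, namely ``quote that every $L^1_{\mathrm{loc}}$ distributional subsolution has a subharmonic representative,'' is correct in spirit but conceals the actual content of the proof, which is the construction of that representative (or of suitable approximants) in the Riemannian, merely-$L^1_{\mathrm{loc}}$ setting. The paper does this explicitly: it identifies $\varsigma:=\Delta w$ as a nonnegative Radon measure, uses the machinery of Proposition \ref{lem:comp} to show that $w$ solves the Dirichlet problem $\Delta w=\varsigma$ in $\Omega$ with its own $L^1$ boundary trace, decomposes $w=\mathsf{w}+\mathsf{v}$ with $\mathsf{w}$ harmonic and $\Delta\mathsf{v}=\varsigma$, approximates $\varsigma$ vaguely by smooth $\varsigma_h$ and uses Boccardo--Gallou\"et $W^{1,q}_0$ estimates to pass to the limit, and finally sets $w_h:=(\mathsf{w}+v_h)^+$, which is locally Lipschitz and weakly subharmonic by Kato's inequality; Li--Schoen is then applied in its $W^{1,2}_{\mathrm{loc}}$ weak form, which is the version whose proof genuinely goes through. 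To make your argument complete you would need either to carry out an equivalent Riesz-type decomposition, or to cite a precise Riemannian statement of both the decomposition for $L^1_{\mathrm{loc}}$ distributional subsolutions and a Li--Schoen inequality valid for the resulting upper semicontinuous class.
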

\begin{proof}
First of all, we observe that \cite[Theorems 1.2 and 2.1]{LS} remain true for local {weak} subharmonic functions as well, i.e.~nonnegative functions belonging to the Sobolev space $ W^{1,2}_{\mathrm{loc}}(\Omega) $ and satisfying \eqref{eeesub} in $ \Omega $. This can easily be checked from the corresponding proofs. On the other hand, as we will explain below, it is possible to construct a sequence $ \{ w_h \} $ such that $ w_h \ge 0 $, $ w_h \in W^{1,\infty}_{\mathrm{loc}}(\Omega) $, $w_h$ is subharmonic in $ \Omega $ and $ w_h \to w $ as $ h \to \infty $ in $ L^1(\Omega) $ and almost everywhere in $ \Omega $. According to the above observations, by applying \cite[Theorem 2.1]{LS} with $ \tau = {1}/{4} $ to each $ w_h $ we obtain
\begin{equation}\label{eq16-bis}
\sup_{x \in B_{\!\frac{3R}{4}}\!(o)}  w_h(x) \leq  \frac{e^{c_n \log 4 \, \left( 1 + \sqrt{K} R \right) }}{\mu\!\left(B_{R}(o)\right)} \int_{B_R(o)} w_h \, d\mu \qquad \forall h \in \mathbb{N} \, ,
\end{equation}
where $ c_n $ is a positive constant depending only on $n$. Hence, by passing to the limit in \eqref{eq16-bis} as $ h \to \infty $, estimate \eqref{eq16-tris} follows.

In the sequel, we show how the sequence $ \{ w_h \} $ can be provided. By virtue of \eqref{eeesub}, we know that in fact $ \varsigma  := \Delta w  $ is a nonnegative Radon measure in $M$ (in particular finite in $ \Omega $). Moreover, by reasoning along the lines of the proof of Proposition \ref{lem:comp}, it is not difficult to see that $w$ solves the problem (up to possibly slightly enlarging $ \Omega $)
\begin{equation*}\label{meas-pb-4b}
\begin{cases}
\Delta w = \varsigma & \text{in } \Omega \, , \\
w = w |_{\partial \Omega} & \text{on } \partial \Omega  \, ,
\end{cases}
\end{equation*}
in the sense that
\begin{equation}\label{eq200}
\int_{\Omega} w \, \Delta \varphi \, {d}\mu -\int_{\partial\Omega} w|_{\partial \Omega}  \, \frac{\partial \varphi}{\partial \nu} \, {d}\sigma =\int_{\Omega} \varphi \, {d} \varsigma
\end{equation}
for every $\varphi\in C^{\infty}\!\left(\overline \Omega\right)$ with $\varphi=0$ on $\partial \Omega$. Note that $ w |_{\partial \Omega} \in L^1(\partial \Omega) $. We now let $ \mathsf{w} $ denote the solution of the following problem:
\begin{equation}\label{meas-pb-1}
\begin{cases}
\Delta \mathsf{w}  = 0 & \text{in }\Omega \, , \\
\mathsf{w}  = w |_{\partial \Omega} & \text{on } \partial \Omega \, .
\end{cases}
\end{equation}
In order to construct such a solution, we consider first the approximate problems
\begin{equation*}\label{meas-pb-1-aux}
\begin{cases}
\Delta \mathsf{w}_k = 0 & \text{in }\Omega \, , \\
\mathsf{w}_k = g_k & \text{on } \partial \Omega \, ,
\end{cases}
\end{equation*}
where $ g_k := w |_{\partial \Omega}  \wedge k  $. These solutions can in turn be obtained by approximating each $ g_k $ with regular boundary data, so that every $\mathsf{w}_k$ satisfies
\begin{equation}\label{eq201}
\int_{\Omega} \mathsf{w}_k \, \Delta\varphi\, {d}\mu -\int_{\partial\Omega} g_k \, \frac{\partial \varphi}{\partial \nu} \, {d}\sigma = 0
\end{equation}
for all test function $ \varphi $ as above. By the comparison principle we have $ 0 \le \mathsf{w}_k \le \mathsf{w}_{k+1} $ in $ \Omega $, whence $ \{ \mathsf{w}_k \} $ is monotone increasing. Let us pick $ \varphi $ as the solution of
\begin{equation*}\label{test-1}
\begin{cases}
-\Delta \varphi = 1 & \text{in }\Omega \, , \\
\varphi = 0 & \text{on } \partial \Omega \, .
\end{cases}
\end{equation*}
From \eqref{eq201} and the monotonicity of $ \{ \mathsf{w}_k \} $, we obtain:
\begin{equation*}\label{eq202}
\int_{\Omega} \left| \mathsf{w}_{k'} - \mathsf{w}_k \right| {d}\mu \le \left\| \tfrac{\partial \varphi}{\partial \nu} \right\|_{L^\infty(\partial \Omega)} \left\| g_{k'}-g_k \right\|_{L^1(\partial \Omega)} \qquad \forall k',k \in \mathbb{N} \, .
\end{equation*}
This shows that $ \{ \mathsf w_k \} $ is Cauchy in $ L^1(\Omega) $ and therefore converges to some function $ \mathsf{w} $ that satisfies
\begin{equation*}\label{eq203}
\int_{\Omega} \mathsf{w} \, \Delta\varphi \, {d}\mu -\int_{\partial\Omega} w |_{\partial \Omega} \, \frac{\partial \varphi}{\partial \nu} \, {d}\sigma = 0 \, ,
\end{equation*}
still for every $\varphi\in C^{\infty}\!\left(\overline \Omega\right)$ with $\varphi=0$ on $\partial \Omega$, namely \eqref{meas-pb-1}.

Let $ \{ \varsigma_h \} \subset C^\infty(M) $ be a sequence of nonnegative functions such that $ \varsigma_h \to \varsigma $ as $ h \to \infty $ vaguely in $ \Omega $ (i.e.~tested against any $C_0\!\left( \overline{\Omega} \right)$ function), with $ \varsigma_h(\Omega) = \varsigma(\Omega) $.  For each $ h \in \mathbb{N} $, we define $ v_h $ to be the solution of
\begin{equation*}\label{meas-pb-2}
\begin{cases}
\Delta v_h = \varsigma_h & \text{in } \Omega \, , \\
v_h = 0 & \text{on } \partial \Omega \, .
\end{cases}
\end{equation*}
By elliptic regularity results, for which we refer e.g.~to \cite[Theorem 1 and Subsection 4]{BG} in the Euclidean context (the fact that we work in a Riemannian framework is not relevant), we can claim that for every $ h \in \mathbb{N} $
$$
\left\| v_h \right\|_{W^{1,q}_0(\Omega)} \le C \, , \qquad \forall q \in \left(1,\tfrac{n}{n-1}\right) ,
$$
for some positive constant $ C $ depending only on $ q, \Omega, \varsigma(\Omega) $ (in particular independent of $h$).  As a consequence, up to a subsequence that we do not relabel, we can assert that $ \{ v_h \} $ converges in $ L^1(\Omega) $ and almost everywhere in $ \Omega $ to the solution $ \mathsf{v} $ of
\begin{equation*}\label{meas-pb-3}
\begin{cases}
\Delta \mathsf{v} = \varsigma & \text{in } \Omega \, , \\
\mathsf{v} = 0 & \text{on } \partial \Omega \, .
\end{cases}
\end{equation*}
We construct the above sequence $ \{ w_h \} $ by setting $ w_h := \left( \mathsf{w} + v_h \right)^+ $. Since both $ \mathsf{w} $ and $ v_h $ are regular inside $ \Omega $ and possess a nonnegative Laplacian, we deduce that each $ w_h  $ is locally Lipschitz and, thanks to Kato's inequality, weakly subharmonic in $\Omega$. We are left with showing that $ \{ w_h \} $ does converge to $ w $. To this end, first of all note that the sequence $ \{ \mathsf{w} + v_h \} $ converges in $ L^1(\Omega) $ and almost everywhere to the function $ v := \mathsf{w} + \mathsf{v} $, which solves
\begin{equation*}\label{meas-pb-4}
\begin{cases}
\Delta v = \varsigma & \text{in } \Omega \, , \\
v = w|_{\partial \Omega} & \text{on } \partial \Omega  \, ,
\end{cases}
\end{equation*}
in the sense of \eqref{eq200}, namely the same problem solved by $w$. This implies
\[
\int_{\Omega} \left( v - w \right) \Delta\varphi \, {d}\mu =0 \qquad \forall \varphi\in C^{\infty}\!\left(\overline \Omega\right)\!: \ \, \varphi |_{\partial \Omega} =0  \, .
\]
Given any $\psi\in C^\infty_c(\Omega)$, let us pick $\varphi$ as the solution of
\[
\begin{cases}
\Delta \varphi =  \psi & \text{in } \Omega \, , \\
\varphi =0 & \text{on } \partial \Omega \, .
\end{cases}
\]
It follows that
\[
\int_{\Omega} \left(v- w\right) \psi\, {d}\mu =0 \, ,
\]
whence  $ v=w $ in view of arbitrariness of $\psi$. We have therefore established that $ \{ \mathsf{w} + v_h \} $ converges in $ L^1(\Omega) $ and almost everywhere to $ w $; because $ w  $ is nonnegative, the same holds for the sequence $ \{ w_h \} $, and the proof is complete.
\end{proof}

Having in mind the original strategy of Osserman \cite{O} (see also \cite{Bre}), we now exhibit a suitable family of supersolutions to \eqref{eqn1} in balls that vanish as the corresponding radii go to infinity. To this purpose it is crucial to recall that, in view of assumption \eqref{eq1b}, the Laplacian-comparison theorem ensures that
\begin{equation}\label{neq7}
\Delta r(x) \leq (n-1) \, \frac{\psi'(r(x))}{\psi(r(x))}  \qquad \text{in } \mathcal D'(M) \, .
\end{equation}
{Note moreover that \eqref{neq7} also holds pointwise outside the cut locus of $o$. We refer for instance to \cite[Theorem 1.11]{MRS}}. 

\begin{lemma}\label{nlem}
Let the curvature condition \eqref{eq1b} be satisfied. Let
$$
H(r):=\int_0^r\frac{\int_0^\rho \psi(\zeta)^{n-1}\, d\zeta}{\psi(\rho)^{n-1}} \, d\rho \qquad \forall r \ge 0 \, .
$$
Given $ p>1 $ and $ R>0 $, there exists a constant $ \mathsf{C} >0$, depending only on $p$, such that the function
\begin{equation}\label{expl-form}
\overline{W}\!_R(x) := \mathsf{C} \, \frac{H(R)^\frac{1}{p-1}}{\left[ H(R)-H(r(x)) \right]^{\frac 2{p-1}}} \qquad \forall x \in B_R(o)
\end{equation}
fulfills
\begin{equation}\label{neq5}
\Delta \overline{W}\!_R \leq  \overline{W}\!_R^{\,\,p} \qquad \text{in } \mathcal D'\!\left(B_R(o)\right) .
\end{equation}
\end{lemma}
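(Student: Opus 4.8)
The plan is to realize $\overline{W}\!_R$ as the radial transplant of an explicit supersolution of the one–dimensional model equation, and then transfer the inequality to $M$ via the Laplacian comparison \eqref{neq7}. First I would record the elementary properties of $H$: since $\psi'(0)=1$ forces $\psi(\rho)\sim\rho$ near $0$, the integrand defining $H$ behaves like $\rho/n$ there, so $H\in C^1([0,\infty))\cap C^\infty((0,\infty))$, $H(0)=H'(0)=0$, $H$ is strictly increasing and $H(R)<\infty$ for every $R>0$. Differentiating $H'(r)=\int_0^r\psi(\zeta)^{n-1}\,d\zeta\,/\,\psi(r)^{n-1}$ gives the key identity
\begin{equation*}
H''(r)+(n-1)\,\frac{\psi'(r)}{\psi(r)}\,H'(r)=1\qquad\forall\,r>0\,,
\end{equation*}
that is $\mathcal{L}H\equiv1$ with $\mathcal{L}g:=g''+(n-1)(\psi'/\psi)\,g'$ the radial part of the Laplacian of the model with warping function $\psi$. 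Setting $v(r):=H(R)-H(r)$, so that $v>0$ on $[0,R)$, $v(R)=0$, $v'=-H'\le0$ and $\mathcal{L}v\equiv-1$, the function in \eqref{expl-form} reads $\overline{W}\!_R(x)=\Lambda\,v(r(x))^{-a}$ with $a:=2/(p-1)>0$ and $\Lambda:=\mathsf{C}\,H(R)^{1/(p-1)}$.

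Next I would reduce \eqref{neq5} to the pointwise inequality $(\mathcal{L}g)(r)\le g(r)^p$ on $[0,R)$, where $g(r):=\Lambda\,v(r)^{-a}$. Fix $\varphi\in C^\infty_c(B_R(o))$, $\varphi\ge0$; on $\operatorname{supp}\varphi$ one has $r\le R'<R$, so $g\circ r$ is Lipschitz. Integrating by parts twice,
\begin{equation*}
\int_{B_R(o)}\overline{W}\!_R\,\Delta\varphi\,d\mu=-\int g'(r)\,\langle\nabla r,\nabla\varphi\rangle\,d\mu=\int g''(r)\,|\nabla r|^2\,\varphi\,d\mu-\int\langle\nabla r,\nabla(g'(r)\,\varphi)\rangle\,d\mu\,.
\end{equation*}
Since $g'=\Lambda\,a\,v^{-a-1}H'\ge0$, the function $g'(r)\,\varphi$ is nonnegative, Lipschitz and compactly supported in $B_R(o)$; because \eqref{neq7} is precisely the statement that $(n-1)\psi'/\psi-\Delta r$ is a nonnegative Radon measure, it extends to such test functions, and together with $|\nabla r|=1$ a.e. it yields
\begin{equation*}
\int_{B_R(o)}\overline{W}\!_R\,\Delta\varphi\,d\mu\le\int\Big[g''(r)+(n-1)\,\frac{\psi'(r)}{\psi(r)}\,g'(r)\Big]\varphi\,d\mu=\int(\mathcal{L}g)(r)\,\varphi\,d\mu\,.
\end{equation*}
It is essential here that $g'\ge0$, so that multiplying \eqref{neq7} by $g'(r)$ keeps the inequality in the right direction; then \eqref{neq5} follows once the ODE inequality is proved.

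For the ODE inequality, using $\mathcal{L}v\equiv-1$ a direct computation gives
\begin{equation*}
(\mathcal{L}g)(r)=\Lambda\Big[a(a+1)\,v^{-a-2}\,(v')^2+a\,v^{-a-1}\Big]\,,\qquad(v')^2=(H'(r))^2\,.
\end{equation*}
The crucial estimate is $(H'(r))^2\le2H(r)$ for all $r\ge0$: indeed $H''\le1$ because $\psi'\ge0$, $\psi>0$ and $H'\ge0$, whence $\tfrac{d}{dr}(H')^2=2H'H''\le2H'$, and integrating from $0$ with $H'(0)=H(0)=0$ gives the claim. Using also $v\le H(R)$ together with the identity $a+2=ap$ (so $v^{-a-2}=v^{-ap}$ and $2a(a+1)+a=a(2a+3)$), one obtains
\begin{equation*}
(\mathcal{L}g)(r)\le\Lambda\,H(R)\,\big[2a(a+1)+a\big]\,v^{-ap}=\mathsf{C}\,a(2a+3)\,H(R)^{\frac{p}{p-1}}\,v^{-ap}\,,
\end{equation*}
whereas $g(r)^p=\mathsf{C}^{p}\,H(R)^{\frac{p}{p-1}}\,v^{-ap}$. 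Hence it suffices to choose $\mathsf{C}$ with $\mathsf{C}^{p-1}\ge a(2a+3)$, for instance $\mathsf{C}=\mathsf{C}(p):=\big(2(3p+1)/(p-1)^2\big)^{1/(p-1)}$, which depends only on $p$, as required (and, since $H(R)\to\infty$ under \eqref{eq1c}, the resulting barrier indeed satisfies $\overline{W}\!_R(x)\to0$ as $R\to\infty$, which is what will be needed afterwards).

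The computation is short; the work lies in making the reduction rigorous in spite of three irregularities — $r$ is only Lipschitz (and not $C^1$ at $o$ or on $\operatorname{cut}(o)$), $\psi'/\psi$ is singular at $o$, and $\overline{W}\!_R$ blows up at $\partial B_R(o)$. The first two are handled by reading \eqref{neq7} as a measure inequality and testing only against nonnegative Lipschitz functions supported in $B_R(o)$ — where, conveniently, $g'(r)$ vanishes near $o$ (since $H'(0)=0$), which absorbs the singularity of $\psi'/\psi$; the third is harmless because the test functions are compactly supported strictly inside $B_R(o)$. The only genuinely delicate analytic point is the bound $(H')^2\le2H$, and it is exactly this that makes the constant $\mathsf{C}$ depend on $p$ alone rather than on $R$ or on $\psi$.
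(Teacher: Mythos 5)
Your proposal is correct and follows essentially the same route as the paper: the same barrier $\overline{W}\!_R$, the same key inequality $[H'(r)]^2\le 2H(r)$ (you derive it from $H''\le 1$, the paper from the equivalent integral identity for $H$), the same use of the Laplacian comparison \eqref{neq7} together with $\overline{W}\!_R^{\,\prime}\ge 0$, and the same final constant $\mathsf{C}=\big(2(3p+1)/(p-1)^2\big)^{1/(p-1)}$. The only difference is that you spell out the distributional integration-by-parts step in detail, whereas the paper delegates it to a cited lemma of Mastrolia--Rigoli--Setti.
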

\begin{proof}
Thanks to the requirement $ \psi' \ge 0 $, it is readily seen that
\begin{equation}\label{neq6}
\left[ H'(r) \right]^2 \leq 2 H(r) \qquad \forall r \ge 0\,.
\end{equation}
{Indeed, one can rewrite $ H(r) $ as
\begin{equation}\label{neq88}
H(r) = \frac{\left[ H'(r) \right]^2}{2} + (n-1) \int_0^r \frac{\psi'(\rho)}{\psi(\rho)} \left[ H'(\rho) \right]^2  d\rho \qquad \forall r \ge 0\,.
\end{equation}
Note that, by virtue of the assumptions on $ \psi $, we have $ H \in C^2([0, \infty)) $.} 
With some abuse of notation, we tacitly use the identification $\overline{W}\!_R(x)\equiv \overline W\!_R(r(x)) \equiv \overline W\!_R(r)  $. The calculation of the derivatives of $ \overline W\!_R $ yields, for all $ 0 \le  r <R  $,
\[
\overline{W}\!_R^{\,\prime}(r) =\frac {2 \mathsf{C}}{p-1} \, \frac{H(R)^{\frac 1{p-1}}}{\left[H(R)-H(r)\right]^{\frac{p+1}{p-1}}} \, H'(r) \, ,
\]
\[
\overline{W}\!_R^{\,\prime\prime}(r) =\frac {2 \mathsf{C}}{p-1} \, \frac{H(R)^{\frac 1{p-1}}}{\left[ H(R)-H(r) \right]^{\frac{p+1}{p-1}}} \, H''(r) + \frac{2(p+1)\mathsf{C}}{(p-1)^2} \, \frac{H(R)^{\frac 1{p-1}}}{\left[H(R)-H(r)\right]^{\frac{2p}{p-1}}} \left[H'(r)\right]^2  .
\]
Upon taking derivatives in \eqref{neq88} and dividing by $ H'(r) $, we deduce that
\begin{equation}\label{neq88-bis}
H''(r) + (n-1) \, \frac{\psi'(r)}{\psi(r)} \, H'(r) = 1 \qquad \forall r>0\, .
\end{equation}
Hence, in view of \eqref{neq6}, \eqref{neq88-bis} and the fact that $ \overline{W}\!_R^{\,\prime} \ge 0$ along with \eqref{neq7} (see in particular \cite[Lemma 1.12]{MRS}), we have:
\[
\begin{aligned}
 \Delta \overline{W}\!_R (x) \leq & \, \overline{W}\!_R^{\,\prime\prime}(r(x)) +(n-1) \, \frac{\psi'(r(x))}{\psi(r(x))} \, \overline{W}\!_R^{\,\prime}(r(x)) \\
= & \, \frac {2 \mathsf{C}}{p-1} \, \frac{H(R)^{\frac 1{p-1}}}{\left[ H(R)-H(r) \right]^{\frac{p+1}{p-1}}} + \frac{2(p+1)\mathsf{C}}{(p-1)^2} \, \frac{H(R)^{\frac 1{p-1}}}{\left[H(R)-H(r)\right]^{\frac{2p}{p-1}}} \left[H'(r)\right]^2  \\
\leq & \, \frac {2 \mathsf{C}}{p-1} \, \frac{H(R)^{\frac 1{p-1}}}{\left[ H(R)-H(r) \right]^{\frac{p+1}{p-1}}}+ \frac{4(p+1)\mathsf{C}}{(p-1)^2} \, \frac{H(R)^{\frac 1{p-1}}}{\left[H(R)-H(r)\right]^{\frac{2p}{p-1}}} \, H(r) \quad \text{in } \mathcal{D}'\!\left( B_R(o) \right) .
\end{aligned}
\]
As a result, recalling that $H(r)<H(R)$ for all $ 0 \le  r <R  $, in order for \eqref{neq5} to hold it is enough to ask that
\[
\frac {2}{p-1} \, \frac{H(R)^{\frac 1{p-1}}}{\left[H(R)-H(r)\right]^{\frac{p+1}{p-1}}} + \frac{4(p+1)}{(p-1)^2} \, \frac{H(R)^{\frac p{p-1}}}{\left[H(R)-H(r)\right]^{\frac{2p}{p-1}}} \leq \mathsf{C}^{p-1} \, \frac{H(R)^\frac{p}{p-1}}{\left[H(R)-H(r)\right]^{\frac{2p}{p-1}}} \, ,
\]
which is equivalent to
\[
\frac {2}{p-1} \, H(R)^{\frac 1{p-1}} \left[H(R)-H(r)\right] + \frac{4(p+1)}{(p-1)^2} \, H(R)^{\frac p{p-1}} \leq \mathsf{C}^{p-1} H(R)^\frac{p}{p-1} \qquad \forall r \in  [0,R) \, .
\]
It is immediate to check that the latter inequality is satisfied provided
$$
\mathsf{C} \ge \left[ 2\,\frac{3p+1}{(p-1)^2} \right]^{\frac{1}{p-1}} \, .
$$
\end{proof}

The main idea lying behind the proof of Theorem \ref{tell} consists of comparing each supersolution $\overline{W}\!_R$ constructed in Lemma \ref{nlem} to the global nonnegative subsolution $W$ in $B_R(o)$. Since $\lim_{r(x)\to R^-}\overline{W}\!_R(r(x))=\infty$ whereas $W$ turns out to be locally bounded, by the comparison principle it follows that $W\leq \overline{W}\!_R$ in $B_R(o)$. This clearly implies that $W$ is identically zero upon letting $R\to \infty$ and using \eqref{eq1c}. Unfortunately we cannot directly exploit this procedure, because $\partial B_R(o)$ may not be smooth enough (in general it is only Lipschitz) and $W$ may not have a trace on $\partial B_R(o)$. In order to overcome such difficulties we need to suitably approximate $ B_R(o) $ by the sublevel sets $\Omega_{R, \varepsilon}$ introduced in \eqref{neq20}.

\begin{proof}[Proof of Theorem \ref{tell}]
We will consider the case $  \alpha = 1 $ only, which is not restrictive by virtue of the standard change of variables $ {W}_\alpha = \alpha^{-{1}/{(p-1)}} \, W $.

\noindent\textit{(i)} Let $W\geq 0$ be a locally integrable function satisfying \eqref{eqn1-subsol}. In particular $ W $ is subharmonic, so that by Proposition \ref{vwmv} we can assert that $W\in L^{\infty}_{\mathrm{loc}}(M)$. Let $R>0$ be fixed. Given any $ \varepsilon>0 $, there exists $R_\varepsilon>0$ such that $ |R-R_\varepsilon|<\varepsilon $ and $ \Omega_{R_\varepsilon,\varepsilon} $ is a regular precompact domain of $ M $, the sets $ \{ \Omega_{R,\varepsilon} \} $ having been introduced in Subsection \ref{regdist}. This is possible since the set of regular values of the exhaustion function $\mathcal E_{\varepsilon}$ has full measure. Let $ \left\{ \overline W\!_R 	\right\} $ be the supersolutions provided by Lemma \ref{nlem}. In view of their explicit expression \eqref{expl-form}, it is apparent that there exists $ \overline{\varepsilon}=\overline{\varepsilon}(R)>0 $ so small that
\begin{equation}\label{WeW}
 \| W \|_{L^\infty\left(B_{R+2\varepsilon}(o)\right)}  \le \overline{W}\!_{R+2\varepsilon}(R-2\varepsilon) \qquad \forall \varepsilon \in (0,\overline{\varepsilon}) \, .
\end{equation}
Now let us fix any $ \varepsilon \in (0,\overline{\varepsilon}) $. By arguing as in the proof of Proposition \ref{lem:comp} and recalling \eqref{neq8}, we can construct another regular precompact domain $ \hat{\Omega} $ such that $ W $ has a well-defined trace on $\partial \hat \Omega$ and
\begin{equation}\label{compact}
\overline{B}_{R-2\varepsilon}(o) \Subset \hat{\Omega} \subset \overline{\Omega}_{R_\varepsilon,\varepsilon} \Subset B_{R+2\varepsilon}(o) \, .
\end{equation}
Hence, thanks to \eqref{WeW} and the fact that $ \overline{W} $ is radially increasing, we can infer that
$$
 W \le \|W\|_{L^\infty(B_{R+2\varepsilon}(o))} \le \overline{W}\!_{R+2\varepsilon} < \infty  \qquad \text{on } \partial \hat{\Omega} \, .
$$
As a result, we have that $\overline{W}\!_{R+2\varepsilon}$ is a bounded supersolution and $W$ is a bounded subsolution to the following Dirichlet problem:
\begin{equation*}
\begin{cases}
\Delta U = U^p & \text{in } \hat{\Omega}  \, , \\
U = \overline{W}\!_{R+2\varepsilon}|_{\partial \hat \Omega} & \text{on } \partial\hat{\Omega} \, ,
\end{cases}
\end{equation*}
so that in particular their difference satisfies
\begin{equation}\label{very-weak-elliptic}
\int_{\hat \Omega} \left[ \left( \overline{W}\!_{R+2\varepsilon} - W \right) \Delta \varphi - \left( \overline{W}\!_{R+2\varepsilon}^{\,\,p} - W^p \right) \varphi \right] {d}\mu  \le 0
\end{equation}
for every nonnegative smooth function $ \varphi $ that vanishes on $\partial \hat \Omega$.  Starting from \eqref{very-weak-elliptic} and taking advantage of a duality argument similar to the one carried out in the proof of Lemma \ref{loc-comp} (even simpler actually), we deduce that $  W \le \overline{W}\!_{R+2\varepsilon}  $ almost everywhere in $ \hat \Omega $. Recalling \eqref{compact}, for every $ \varepsilon \in (0,\overline{\varepsilon})   $ we thus obtain
\[
W(x) \le  \overline W\!_{R+2\varepsilon}(x)  \qquad \text{for a.e.~$ x $ in } B_{R-2\varepsilon}(o) \, ,
\]
whence, upon letting $ \varepsilon \downarrow 0 $,
\[
W(x) \le  \overline W\!_{R}(x)  \qquad \text{for a.e.~$ x $ in } B_{R}(o) \, .
\]
The thesis then follows by letting $ R \to \infty $, as $ \left\{ \overline{W}\!_R \right\} $ converges locally uniformly to zero in view of \eqref{eq1c}.

\noindent\textit{(ii)} Let $ W \in L^p_{\mathrm{loc}}(M) $ satisfy \eqref{eqn1}. One can reason, for instance, as in \cite[Lemma 2]{Bre}. Indeed, Kato's inequality ensures that $ W^+ $ fulfills \eqref{eqn1-subsol} and therefore is a distributional subsolution to \eqref{eqn1}. By \textit{(i)}  we then deduce that $ W^+ = 0 $. Upon applying the same argument to $ W^- $ it follows that also $ W^- = 0 $, whence $W$ is identically zero.
\end{proof}

\section{Proof of the uniqueness results for the fast diffusion equation}\label{uniq}

We are finally in position to prove Theorems \ref{tuni1} and \ref{tuni2}. The arguments we exploit are inspired from \cite{GIM} and take advantage of the crucial nonexistence results for \eqref{eqn1-subsol} established in Section \ref{Ellptic}.

\begin{proof}[Proof of Theorem \ref{tuni1}]
Given $ t_0>0 $, consider the following function:
$$
W(x) := \int_0^{t_0} \left|u(x,s)^{m}-v(x,s)^{m}\right| e^{-s}  \, {d}s \qquad \forall x \in M \, .
$$
By assumption we know that $ \left| u(\cdot,t) - v(\cdot,t) \right| \to 0 $ in $ L^1_{\mathrm{loc}}(M)  $ as $ t \to 0^+ $, whence in particular
\begin{equation}\label{eq20}
\lim_{t\to 0^+}\int_{M} \left|u(x,t)-v(x,t)\right| \psi(x) \, {d}\mu = 0
\end{equation}
for every $\psi\in C^\infty_c(M)$. If we multiply \eqref{eq14} by $ e^{-t}$ and integrate by parts in $ (0,t_0) $, thanks to \eqref{eq20} we end up with
\begin{equation}\label{eq14-bis}
e^{-t_0} \left| u(x,t_0)-v(x,t_0) \right| + \int_0^{t_0} \left|  u(x,s) - v(x,s) \right| e^{-s} \, {d}s \le \Delta W (x) \qquad \text{in } \mathcal D'(M)\, ,
\end{equation}
which trivially implies
\begin{equation*}\label{eq140}
 \int_0^{t_0} \left|  u(x,s) - v(x,s) \right| e^{-s} \, {d}s \le \Delta W (x) \qquad \text{in } \mathcal D'(M)\,.
\end{equation*}
By applying H\"{o}lder's inequality to the left-hand side, it follows that
\begin{equation}\label{eq140-b}
 \left( \int_0^{t_0} \left|  u(x,s) - v(x,s) \right|^m e^{-s} \, {d}s \right)^{\frac{1}{m}} \left( 1-e^{-t_0} \right)^{-\frac{1-m}{m}} \le \Delta W (x) \qquad \text{in } \mathcal D'(M) \, .
\end{equation}
Recalling the elementary inequality
\begin{equation*}\label{num-m}
2^{m-1} \left|a^{m}-b^{m}\right| \le  |a-b|^m \qquad \forall a,b \in \mathbb{R} \, ,
\end{equation*}
from \eqref{eq140-b} we further deduce that
\begin{equation*}\label{eq141}
 \left( \int_0^{t_0} \left| u(x,s)^{m}-v(x,s)^{m} \right| e^{-s} \, {d}s \right)^{\frac{1}{m}} \underbrace{\left( 2-2e^{-t_0} \right)^{-\frac{1-m}{m}}}_{=:\alpha>0} \le \Delta W (x) \qquad \text{in } \mathcal D'(M) \, ,
\end{equation*}
namely $ W $ satisfies
\begin{equation*}\label{eq142}
\Delta W \ge \alpha \, W^{\frac{1}{m}} \qquad \text{in } \mathcal D'(M) \, .
\end{equation*}
Hence, thanks to Theorem \ref{tell}\textit{(i)} with $ p=1/m $, we can assert that $W = 0$. The thesis then follows in view of the arbitrariness of $ t_0 $.
\end{proof}

\begin{proof}[Proof of Theorem \ref{tuni2}]
By virtue of Theorem \ref{distr-sol}, we know that $ u \ge \underline{u} $. In particular, we can infer that \eqref{eq14} still holds with $ v\equiv\underline{u} $ by merely using the fact that both $ u $ and $ \underline{u} $ are distributional solutions (i.e.~we do not need them to be strong). Hence, thanks to a standard time cut-off argument, by proceeding as in the proof of Theorem \ref{tuni1} we obtain the inequality
\begin{equation}\label{eq180}
\begin{aligned}
& \, e^{-t_0} \left[ u(x,t_0)-\underline u (x,t_0) \right] - e^{-\tau} \left[ u(x,\tau)-\underline u(x,\tau) \right] + \int_\tau^{t_0} \left[  u(x,s) - \underline u(x,s) \right] e^{-s} \, {d}s \\
\le & \, \Delta \int_\tau^{t_0} \left[ u(x,s)^{m} -\underline u(x,s)^{m} \right] e^{-s} \, {d}s \qquad \text{in } \mathcal{D}'(M) \, ,
\end{aligned}
\end{equation}
valid for almost every $ t_0,\tau>0 $ with $ t_0 > \tau $. If we set
$$
W(x) := \int_0^{t_0} \left[ u(x,s)^{m}-\underline u(x,s)^{m} \right] e^{-s} \, {d}s \qquad \forall x \in M
$$
and let $ \tau \to 0^+ $ in \eqref{eq180}, using \eqref{eq12c}, we end up with the analogue of \eqref{eq14-bis} with $ v \equiv \underline{u} $. We can therefore repeat exactly the same passages as above, which lead to $W=0$, so that the thesis follows again in view of the arbitrariness of $ t_0 $.
\end{proof}

\noindent {\sc Acknowledgment.} The authors were partially supported by the PRIN Project  ``Direct and Inverse Problems for Partial Differential Equations: Theoretical Aspects and Applications'' (grant no.~201758MTR2, MIUR, Italy). M.M. and F.P. were also supported by the GNAMPA Projects ``Analytic and Geometric Problems Associated to Nonlinear Elliptic and Parabolic PDEs'', ``Existence and Qualitative Properties for Solutions of Nonlinear Elliptic and Parabolic PDEs''  and ``Differential Equations on Riemannian Manifolds and Global Analysis''. The authors thank the GNAMPA group of the Istituto Nazionale di Alta Matematica (INdAM, Italy).

\end{document}